\documentclass{amsart}
\usepackage{appendix}
\usepackage{tabularx}
\usepackage{xcolor}
\usepackage[numbers,sort&compress]{natbib}
\usepackage{color}

\newtheorem{theorem}{Theorem}[section]
\newtheorem{lemma}[theorem]{Lemma}

\newtheorem{corollary}[theorem]{Corollary}

\theoremstyle{remark}
\newtheorem{remark}[theorem]{Remark}

\numberwithin{equation}{section}
\allowdisplaybreaks[4]

\begin{document}

\title[Gradient estimates and Parabolic frequency]{Gradient estimates and Parabolic frequency under the Laplacian $G_{2}$ flow}

\author{Chuanhuan Li}
\address{ School of Mathematical Sciences, Laboratory of Mathematics and Complex Systems, Beijing Normal University, Beijing 100875, China}
\curraddr{}
\email{chli@mail.bnu.edu.cn}
\thanks{}

\author{Yi Li$^{\ast}$}
\address{ School of Mathematics and Shing-Tung Yau Center of Southeast University, Southeast University, Nanjing 211189, China\newline
${\quad }$ Shanghai Institute for Mathematics and Interdisciplinary Sciences,
 657 Songhu Road, Yangpu District, Shanghai 200433, China}
\curraddr{}
\email{yilicms@gmail.com, yilicms@seu.edu.cn}
\thanks{*Corresponding author}

\author{Kairui Xu}
\address{ School of Mathematics, Southeast University, Nanjing 211189, China}
\curraddr{}
\email{xukarry@163.com}
\thanks{}

\subjclass[2020]{Primary 53E99; 58J35}

\keywords{}

\date{}

\dedicatory{}

\begin{abstract}
    In this paper, we consider the Laplacian $G_{2}$ flow on a closed seven-dimensional manifold $M$ with a closed $G_{2}$-structure. We first obtain the gradient estimates {for} positive solutions of the heat equation under the Laplacian $G_{2}$ flow and then we get the Harnack inequality on spacetime. As an application, we prove the monotonicity of parabolic frequency for positive solutions of the heat equation with bounded Ricci curvature, and get the integral-type Harnack inequality. Besides, we prove the monotonicity of parabolic frequency for solutions of the linear heat equation with bounded Bakry-\'{E}mery Ricci curvature, and then obtain the backward uniqueness. 

   {\bf Keywords} Laplacian $G_{2}$ flow; Gradient estimate; Parabolic frequency; Backward uniqueness

\end{abstract}
  
\maketitle

\section{Introduction}

\subsection{Gradient estimates under the Laplacian $G_{2}$ flow}
In \cite{heat equation by Hamilton, L-Y 86}, P. Li, S.-T. Yau and Hamilton obtained the following gradient estimates for positive solutions of the heat equation on a closed Riemannian manifold with Ricci curvature bounded {below}.

${}$

{\bf Theorem A. (Li-Yau, 1986)} Let $(M,g)$ be a closed $n$-dimensional manifold with nonnegative Ricci curvature, and $u=u(x,t)$ be a positive solution of the heat equation on $M\times(0,\infty)$. Then the following estimate 
$$
\frac{|\nabla u|^{2}}{u^{2}}-\frac{\partial_{t}u}{u}\leq\frac{n}{2t}
$$
holds on $M\times(0,\infty)$.

${}$

{\bf Theorem B. (Hamilton, 1993)} Let $(M,g)$ be a closed $n$-dimensional manifold with ${\rm Ric}\geq -Kg$ for some $K\geq0$, and $u=u(x,t)$
be a positive solution of the heat equation with $u(x,t)\leq A$ for all $(x,t)\in M\times(0,\infty)$, where $A$ is a positive constant. Then the following estimate
\begin{equation}
    \frac{|\nabla u|^{2}}{u^{2}}\leq\left(\frac{1}{t}+2K\right)\ln\frac{A}{u}\notag
\end{equation}
holds on $M\times(0,\infty)$.

${}$

These two estimates provide a versatile tool for studying the analytical, topological, and geometrical properties of manifolds.

In 2010, B\v aile\c steanu-Cao-Pulemotov \cite{heat equation under RF by Cao xiaodong} {obtained} the Li-Yau estimate for positive solutions of the heat equation when the metrics $g(t)$ are evolved by the Ricci flow
\begin{align}
\partial_{t}g(t)=-2 \!\ {\rm Ric}(g(t)).
\end{align}
The Ricci flow was introduced by Hamilton in \cite{RF} to study the compact three-manifolds with positive Ricci curvature, which is a special case of the Poincar\'e conjecture finally proved by Perelman in \cite{Perelman 03, Perelman 02}. Hamilton \cite{RF} obtained the short-time existence and uniqueness of the Ricci flow on compact manifolds, and Shi \cite{Shi-89} obtained a short-time solution of the Ricci flow on a complete noncompact manifold and the uniqueness with bounded Riemann curvature was proved by Chen-Zhu in \cite{C-Z 2006}. After that, many people began to study the gradient estimate {for} the positive solutions of the heat equation when the metrics are evolved {by} geometric flows see \cite{heat equation under RHF, Gradient estimate Liu, Sun 2011}.

${}$

In this paper, we first study gradient estimates {for} positive solutions of the heat equation under the Laplacian $G_{2}$ flow for closed $ G_{2}$-structure:
\begin{equation}
  \left \{
       \begin{array}{rl}
          \partial_{ t}\varphi(t)&=\triangle_{\varphi(t)}\varphi(t),\\
           \varphi(0)&=\varphi,
       \end{array}
  \right.
  \label{G2 flow}
\end{equation}
which was introduced by Bryant \cite{G2 structure Bryant} on a smooth $7$-manifold $M$ admitting closed $G_{2}$-structure,
where $\triangle_{\varphi(t)}\varphi(t)=dd^{\ast}_{\varphi(t)}\varphi(t)+d^{\ast}_{\varphi(t)}d\varphi(t)$ is the Hodge Laplacian of $g(t)$ and $\varphi$ is an initial closed $G_{2}$-structure. Here $g(t)$ is the associated Riemannian metric of $\varphi(t)$. Since for a closed $G_{2}$-structure $\varphi$, $\triangle_{\varphi}\varphi=dd^{\ast}_{\varphi}\varphi$, we see that the closedness of $\varphi(t)$ is preserved along the Laplacian $G_{2}$ flow  $\eqref{G2 flow}$. The existence {for} the solution {of} the Laplacian $G_{2}$ flow can be found in \cite{G2 structure Bryant, Fine-Yao 2018, LL G2 flow, local curvature of G2, G2 flow}.

We first consider the following Li-Yau type gradient estimate of the heat equation 
\begin{align}
    \label{heat equation}\partial_{t}u(t)=\Delta_{g(t)}u(t)
\end{align}
under the Laplacian $G_{2}$ flow $\eqref{G2 flow}$, where $\Delta_{g(t)}={\rm tr}_{g(t)}\left(\nabla^{2}_{g(t)}\right)$ is the {trace} Laplacian induced by $g(t)$.

\begin{theorem}\label{theorem 1.1}
    Let $(M,\varphi(t))_{t\in(0,T]}$ be the solution of the Laplacian $G_{2}$ flow $\eqref{G2 flow}$ on a closed $7$-dimensional manifold $M$ with $T<+\infty$ and $-Kg(t)\leq {\rm Ric}(g(t))\leq0$, where $g(t)$ is the Riemannian metric associated with $\varphi(t)$ and $K$ is a positive constant. If $u(t)$ is a positive solution of the heat equation $(\ref{heat equation})$, then on $M\times(0, T]$, the following estimate
    \begin{equation}
        \frac{|\nabla_{g(t)} u(t)|_{g(t)}^{2}}{u^{2}(t)}-\alpha\frac{\partial_{t}u(t)}{u(t)}\leq \frac{7\alpha}{2at}+\left(\frac{49\alpha}{3a}+\frac{105\alpha^{2}-98\alpha}{2a(\alpha-1)}+\frac{7\sqrt{29}\alpha}{2\sqrt{ab}}\right)K
    \end{equation}
holds for any $\alpha>1$ {and} $a,b>0$ with $\displaystyle{a+2b=\frac{1}{\alpha}}$.
\end{theorem}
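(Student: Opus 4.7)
The plan is to adapt the classical Li--Yau maximum principle argument to the Laplacian $G_2$ flow, carefully tracking the extra terms introduced by the time-dependence of $g(t)$. Setting $f=\log u$, the heat equation $\partial_t u=\Delta_{g(t)} u$ becomes
\begin{equation}
(\partial_t - \Delta_{g(t)}) f = |\nabla f|_{g(t)}^2,\notag
\end{equation}
and I would introduce the auxiliary function
\begin{equation}
F := t\Bigl(|\nabla f|^2 - \alpha\,\partial_t f\Bigr),\notag
\end{equation}
so that the desired estimate is equivalent to an $(x,t)$-uniform upper bound on $F$, suited for the parabolic maximum principle on the compact manifold $M\times[0,T]$.

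Next I would compute the evolution $(\partial_t - \Delta)F$ in two layers. The first layer is the Bochner identity
\begin{equation}
\Delta |\nabla f|^2 = 2|\nabla^2 f|^2 + 2\langle \nabla f,\nabla \Delta f\rangle + 2\operatorname{Ric}(\nabla f,\nabla f),\notag
\end{equation}
into which I would substitute $\Delta f = \partial_t f - |\nabla f|^2$ so that the dimension-7 inequality $|\nabla^2 f|^2 \geq (\Delta f)^2/7$ supplies the key coercive term $\tfrac{1}{7}(\partial_t f - |\nabla f|^2)^2$. The second layer records the corrections coming from $\partial_t g(t)$ under $\eqref{G2 flow}$: for a closed $G_2$-structure the metric evolves as $\partial_t g_{ij} = -2\operatorname{Ric}_{ij}$ plus a purely torsion-dependent piece, and these terms produce additional contributions in $\partial_t |\nabla f|^2$ and $\partial_t \Delta f$. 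The assumption $-Kg\leq\operatorname{Ric}\leq 0$ controls both ends of this: the lower bound bounds $\operatorname{Ric}(\nabla f,\nabla f)\geq -K|\nabla f|^2$ as in classical Li--Yau, while the upper bound (together with the closed-$G_2$ identity relating $\operatorname{Ric}$ to the torsion) bounds the torsion quantities $|\tau|^2$ and $T_{ik}T_j^{\ k}$ pointwise in terms of $K$ and allows the corresponding contributions to $\partial_t|\nabla f|^2$ to be estimated by a multiple of $K|\nabla f|^2$.

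Having assembled a pointwise differential inequality for $F$, I would apply the maximum principle at an interior maximum $(x_0,t_0)$, where $\nabla F(x_0,t_0)=0$, $\Delta F(x_0,t_0)\leq 0$, and $\partial_t F(x_0,t_0)\geq 0$. The remaining algebraic work is to massage the resulting inequality into a quadratic in $F/t$. The square $(\partial_t f - |\nabla f|^2)^2$ is expanded and combined with $|\nabla f|^2 \partial_t f$ cross terms; Young's inequality with the parameters $a$ and $b$ satisfying $a+2b=1/\alpha$ is then applied to distribute the mass among the monomials $(\partial_t f)^2$, $(\partial_t f)|\nabla f|^2$, $|\nabla f|^4$, and $|\nabla f|\cdot (\text{torsion/$K$ term})$. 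The parameter $a$ absorbs the $(\Delta f)^2$-type term into $F^2$, yielding the $\tfrac{7\alpha}{2a}$ coefficient; the constraint $a+2b=1/\alpha$ makes the $(\partial_t f)|\nabla f|^2$ cross term fit, producing the $\tfrac{49\alpha}{3a}$ and $\tfrac{105\alpha^2-98\alpha}{2a(\alpha-1)}$ contributions; and the Cauchy--Schwarz step on the mixed $\nabla f$--torsion term accounts for the remaining $\tfrac{7\sqrt{29}\alpha}{2\sqrt{ab}}\,K$ piece. Solving the resulting quadratic for $F(x_0,t_0)$ gives the stated bound.

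The main obstacle, in my judgment, is not the maximum-principle step but the bookkeeping in the second layer: precisely identifying the torsion terms arising from $\partial_t g$ under $\eqref{G2 flow}$, substituting the closed-$G_2$ identity that expresses $\operatorname{Ric}$ (and hence $|\tau|^2$) in terms of the torsion, and showing that under $-Kg\leq \operatorname{Ric}\leq 0$ every such term is dominated by a constant multiple of $K(1+|\nabla f|^2)$ with sharp enough constants so that the final numerical coefficients (in particular the factor $\sqrt{29}$, which must come from a dimension-specific algebraic identity in the closed $G_2$ setting) come out correctly. Once this reduction is performed, the rest is a standard, if tedious, Young's-inequality balancing act.
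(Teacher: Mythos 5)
Your overall strategy coincides with the paper's: set $f=\ln u$, form $F=t\left(|\nabla f|^2-\alpha\,\partial_t f\right)$, compute $(\Delta-\partial_t)F$ incorporating the flow corrections from $\partial_t g=-2\operatorname{Ric}-\tfrac23|\mathbf{T}|^2g-4\widehat{\mathbf{T}}$ (with $\widehat{\mathbf{T}}_{ij}=\mathbf{T}_i^{\ k}\mathbf{T}_{kj}$), use $\Delta f=\partial_t f-|\nabla f|^2$ and the dimension-$7$ trace inequality, exploit $R_{g(t)}=-|\mathbf{T}(t)|^2$ and $-Kg\leq\operatorname{Ric}\leq 0$, and close with the parabolic maximum principle and a quadratic formula. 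So the skeleton is the same as the proof of Theorem~\ref{theorem 3.1}.

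Where your sketch diverges is exactly in the bookkeeping you yourself flagged as the main obstacle, and the misattributions would make it hard to reproduce the stated constants. The parameters $a,b$ with $a+2b=1/\alpha$ are not used to distribute a Young inequality among the monomials $(\partial_t f)^2$, $(\partial_t f)|\nabla f|^2$, $|\nabla f|^4$; they split the \emph{Hessian} term: one writes $\sum f_{ij}^2=(a\alpha+2b\alpha)\sum f_{ij}^2$, spends one $b\alpha$-copy completing the square against $\alpha R_{ij}f_{ij}$ and the other against $2\alpha\widehat{\mathbf{T}}_{ij}f_{ij}$, and keeps $a\alpha\sum f_{ij}^2\geq\tfrac{a\alpha}{7}(\Delta f)^2$. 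The resulting error $-\tfrac{\alpha}{4b}|\operatorname{Ric}|^2-\tfrac{\alpha}{b}|\widehat{\mathbf{T}}|^2$, bounded via $|\operatorname{Ric}|^2\leq 7K^2$ and $|\widehat{\mathbf{T}}|\leq|\mathbf{T}|^2=-R\leq 7K$, gives $\tfrac{7\alpha}{4b}K^2+\tfrac{49\alpha}{b}K^2=\tfrac{29\cdot 7\alpha}{4b}K^2$, and this is the actual source of $\sqrt{29}$ once the quadratic formula is applied---it is not a Cauchy--Schwarz on a mixed $\nabla f$--torsion term. That Cauchy--Schwarz step (applied to $\widehat{\mathbf{T}}_{ij}f_if_j$ using skew-symmetry of $\mathbf{T}$) yields only $-7K|\nabla f|^2$, which feeds into the $\tfrac{105\alpha^2-98\alpha}{2a(\alpha-1)}K$ coefficient via the decomposition $(y-z)^2=\tfrac{1}{\alpha^2}(y-\alpha z)^2+\left(\tfrac{\alpha-1}{\alpha}\right)^2y^2+\tfrac{2(\alpha-1)}{\alpha^2}y(y-\alpha z)$ with $y=|\nabla f|^2$, $z=\partial_t f$, combined with $mx^2-nx\geq -n^2/(4m)$. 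Finally, the $\tfrac{49\alpha}{3a}K$ coefficient comes from the $-\tfrac23|\mathbf{T}|^2F=\tfrac23 RF$ term (via $R\geq-7K$), not from a cross term. With these attributions corrected, your plan carries through to the stated bound.
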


As an application, we can get the following Harnack inequality on spacetime. 

\begin{corollary}
    Let $(M,\varphi(t))_{t\in(0,T]}$ be the solution of the Laplacian $G_{2}$ flow $\eqref{G2 flow}$ on a closed $7$-dimensional manifold $M$ with $T<+\infty$ and $-Kg(t)\leq {\rm Ric}(g(t))\leq0$, where $g(t)$ is the Riemannian metric associated with $\varphi(t)$ and $K$ is a positive constant. If $u(t)$ is a positive solution of the heat equation $(\ref{heat equation})$, then for $(x,t_{1}), (y, t_{2})\in M\times(0, T]$ with $t_{1}<t_{2}$, we have 
    $$u(x,t_{1})\leq u(y,t_{2})\left(\frac{t_{2}}{t_{1}}\right)^{\frac{7}{2a}}\exp\left\{\int_{0}^{1} \left[\frac{\alpha|\gamma^{\prime}(s)|_{\sigma(s)}^{2}}{4(t_{2}-t_{1})}+(t_{2}-t_{1})C_{a,b,\alpha}K\right] ds\right\},$$
    where $\alpha>1$, 
    $$C_{a,b,\alpha}=\frac{49}{3a}+\frac{105\alpha-98}{2a(\alpha-1)}+\frac{7\sqrt{29}}{2\sqrt{ab}},
    $$
    $a,b>0$ with $\displaystyle{a+2b=\frac{1}{\alpha}}$, $\gamma(s)$ is a geodesic curve connecting $x$ and $y$ with $\gamma(0)=y$ and $\gamma(1)=x$, and $|\gamma^{\prime}(s)|_{\sigma(s)}$ is the length of the vector $\gamma^{\prime}(s)$ at $\sigma(s)=(1-s)t_{2}+st_{1}$.
\end{corollary}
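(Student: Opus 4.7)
The plan is to pass from the pointwise Li--Yau estimate of Theorem~\ref{theorem 1.1} to an integrated Harnack inequality by the classical device: integrate the logarithmic derivative of $u$ along the spacetime curve $s\mapsto(\gamma(s),\sigma(s))$ joining $(y,t_2)$ to $(x,t_1)$, and exponentiate. Setting $f:=\log u$ and dividing the estimate of Theorem~\ref{theorem 1.1} through by $\alpha$, one obtains
\begin{equation*}
-\partial_t f \leq -\frac{1}{\alpha}|\nabla f|^2 + \frac{7}{2at} + C_{a,b,\alpha}K.
\end{equation*}

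Next, with $\sigma(s)=(1-s)t_2+st_1$ (so that $\sigma'(s)=-(t_2-t_1)$) and $F(s):=f(\gamma(s),\sigma(s))$, the chain rule gives $F'(s)=\langle\nabla f,\gamma'(s)\rangle-(t_2-t_1)\,\partial_t f$. I would bound the $\partial_t f$ term by the displayed inequality, and control the inner product by Young's inequality in the form
\begin{equation*}
\langle\nabla f,\gamma'(s)\rangle \leq \frac{t_2-t_1}{\alpha}\,|\nabla f|^2 + \frac{\alpha|\gamma'(s)|_{\sigma(s)}^2}{4(t_2-t_1)},
\end{equation*}
where the Young weight is engineered so that the $|\nabla f|^2$ terms cancel exactly. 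What remains is
\begin{equation*}
F'(s)\leq \frac{\alpha|\gamma'(s)|_{\sigma(s)}^2}{4(t_2-t_1)} + \frac{7(t_2-t_1)}{2a\,\sigma(s)} + (t_2-t_1)\,C_{a,b,\alpha}\,K.
\end{equation*}

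Integrating from $s=0$ to $s=1$ and using the elementary identity $\int_0^1\sigma(s)^{-1}\,ds=(t_2-t_1)^{-1}\log(t_2/t_1)$ turns the $\sigma(s)^{-1}$ contribution into $\frac{7}{2a}\log(t_2/t_1)$, and yields the desired bound on $\log u(x,t_1)-\log u(y,t_2)$; exponentiating gives the corollary. The only delicate point is bookkeeping: the Young weight $1/\alpha$ must be matched to the coefficient already present in Theorem~\ref{theorem 1.1} so that the gradient terms cancel exactly and the time integral produces precisely the exponent $7/(2a)$ on $t_2/t_1$. The argument is insensitive to whether $\gamma$ is a geodesic (with respect to which metric, even) or merely a smooth curve from $y$ to $x$, since the right-hand side depends only on $|\gamma'(s)|_{\sigma(s)}^2$, so no additional geometric input beyond Theorem~\ref{theorem 1.1} is required.
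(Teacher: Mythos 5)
Your proof is correct and follows essentially the same route as the paper: define $l(s)=\log u(\gamma(s),\sigma(s))$, apply Young's inequality with the weight $1/\alpha$ so the gradient terms cancel against the Li--Yau bound of Theorem~\ref{theorem 1.1}, then integrate $\sigma(s)^{-1}$ to produce $\tfrac{7}{2a}\log(t_2/t_1)$. Your closing observation that the inequality holds for any smooth curve (the geodesic merely optimizes the $|\gamma'|^2$ term) is accurate and worth keeping in mind, but does not alter the argument.
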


For the Hamilton type gradient estimate of the heat equation under the Laplacian $G_{2}$ flow $\eqref{G2 flow}$, we have

\begin{theorem}\label{theorem 1.3}
    Let $(M,\varphi(t))_{t\in(0,T]}$ be the solution of the Laplacian $G_{2}$ flow $\eqref{G2 flow}$ on a closed $7$-dimensional manifold $M$ with $T<+\infty$ and $-Kg(t)\leq {\rm Ric}(g(t))\leq0$, where $g(t)$ is the Riemannian metric associated with $\varphi(t)$ and $K$ is a positive constant. If $u(t)$ is a positive solution of the heat equation, then on $M\times(0, T]$, the following estimate
    \begin{equation}
         |\nabla_{g(t)} u(t)|_{g(t)}^{2}\leq\frac{u(t)}{t}\left[u(t)\ln\frac{A}{u(t)}+\lambda A^{2}-\lambda\eta^{2}\right]
    \end{equation}
holds, where $\displaystyle{\eta=\min_{M}u(0)}$, $\displaystyle{A=\max_{M}u(0)}$ and $\lambda$ is a constant depending on $K$, $\eta$ and $T$.
\end{theorem}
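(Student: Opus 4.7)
The plan is to adapt Hamilton's classical maximum-principle argument (as carried out under the Ricci flow by B\v aile\c steanu--Cao--Pulemotov) to the Laplacian $G_{2}$ flow. Since $u$ is bounded between $\eta$ and $A$ by the parabolic maximum principle for the heat equation with time-dependent metric, the substitution $f=\ln u$ is natural, with $L:=\ln(A/u)\ge 0$ and $|\nabla f|^{2}=|\nabla u|^{2}/u^{2}$. Direct computation gives $(\partial_{t}-\Delta)f=|\nabla f|^{2}$ and $(\partial_{t}-\Delta)L=-|\nabla f|^{2}$. I would introduce the auxiliary function
\begin{equation*}
P(x,t)\;=\;t\,|\nabla f|^{2} - (1+ct)\,L
\end{equation*}
for a constant $c=c(K)$ chosen below; the aim is to show $P\le 0$, after which the stated form of the theorem follows by multiplying through by $u^{2}/t$ and absorbing the resulting $cu^{2}\ln(A/u)$ term into $u\lambda(A^{2}-\eta^{2})/t$, using $u\in[\eta,A]$, $L\le\ln(A/\eta)$, and $t\le T$.

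The main computation is $(\partial_{t}-\Delta)P$. Under the Laplacian $G_{2}$ flow, $\partial_{t}g_{ij}=-2R_{ij}+S_{ij}$ where $S_{ij}$ is the torsion-quadratic correction, and the Bochner identity yields
\begin{equation*}
(\partial_{t}-\Delta)|\nabla f|^{2} \;=\; -S(\nabla f,\nabla f) + 2\langle\nabla|\nabla f|^{2},\nabla f\rangle - 2|\nabla^{2}f|^{2},
\end{equation*}
the Ricci contributions cancelling exactly as in the Ricci-flow setting. Combining with the evolution of $L$ and collecting terms gives
\begin{equation*}
(\partial_{t}-\Delta)P \;=\; (2+ct)|\nabla f|^{2} + t\bigl[-S(\nabla f,\nabla f) + 2\langle\nabla|\nabla f|^{2},\nabla f\rangle - 2|\nabla^{2}f|^{2}\bigr] - cL.
\end{equation*}
At an interior positive maximum $(x_{0},t_{0})$ of $P$, the condition $\nabla P=0$ forces $t\nabla|\nabla f|^{2}=-(1+ct)\nabla f$, which after substitution collapses the above expression to
\begin{equation*}
(\partial_{t}-\Delta)P\bigl|_{(x_{0},t_{0})} \;=\; -ct|\nabla f|^{2} - tS(\nabla f,\nabla f) - 2t|\nabla^{2}f|^{2} - cL.
\end{equation*}

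Choosing $c\ge C_{1}(K)$, where $C_{1}(K)$ is an upper bound for the operator norm of $S$, gives $-ct|\nabla f|^{2}-tS(\nabla f,\nabla f)\le t(C_{1}-c)|\nabla f|^{2}\le 0$, so $(\partial_{t}-\Delta)P|_{(x_{0},t_{0})}\le -cL$. At a positive maximum, $P>0$ forces $|\nabla f|^{2}>0$, so $u$ is non-constant; the strong maximum principle for the heat equation then gives $u(x_{0},t_{0})<A$, i.e.\ $L(x_{0},t_{0})>0$. Hence $(\partial_{t}-\Delta)P|_{(x_{0},t_{0})}<0$, contradicting $(\partial_{t}-\Delta)P\ge 0$ at an interior maximum. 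Since $P(\cdot,0)=-L\le 0$, this yields $P\le 0$ on $M\times(0,T]$, i.e.\ the Hamilton-type bound $|\nabla u|^{2}/u^{2}\le (1/t+c)\ln(A/u)$, from which the stated inequality is deduced by the absorption described above.

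The principal technical obstacle is the torsion term $S(\nabla f,\nabla f)$, which is absent in the Ricci-flow analogue. For a \emph{closed} $G_{2}$-structure the Bianchi-type identity $R=-\tfrac{1}{2}|\tau_{2}|^{2}$ holds, so the hypothesis $-Kg\le\mathrm{Ric}\le 0$ yields $|\tau_{2}|^{2}=-2R\le 14K$, hence $|S|_{\mathrm{op}}\le C(K)$; this fixes the admissible $c=c(K)$. A secondary bookkeeping point is to verify that the $\lambda$ absorbing $cu^{2}\ln(A/u)\le u\lambda(A^{2}-\eta^{2})/t$ uniformly over $(x,t)\in M\times(0,T]$ can be chosen to depend only on $K$, $\eta$, and $T$, with $A$ regarded as part of the fixed initial data.
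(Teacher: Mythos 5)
Your argument is correct in substance, but it follows a genuinely different route from the paper's proof, and in fact yields a stronger intermediate estimate before the absorption step. The paper does not pass to $f=\ln u$; instead it works directly with $u$ and considers the auxiliary quantity
\begin{equation*}
P \;=\; t\,\frac{|\nabla u|^{2}}{u} \;-\; u\ln\frac{A}{u} \;+\; \lambda u^{2},
\end{equation*}
shows by a pointwise computation (Bochner plus the evolution $\partial_{t}g=-2\,\mathrm{Sic}$) that $(\partial_{t}-\Delta)P\le 0$ holds \emph{everywhere} on $M\times(0,T]$ once $\lambda\ge \tfrac{7KT}{3\eta}$, and then applies only the weak maximum principle to get $P\le\max_{M}P(\cdot,0)=\lambda A^{2}$; the stated inequality follows since $u\ge\eta$. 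By contrast, your ansatz $P=t|\nabla f|^{2}-(1+ct)L$ does not satisfy a clean one-sided parabolic inequality; you only obtain $(\partial_{t}-\Delta)P\le -cL$ after using the gradient condition $\nabla P=0$ at a hypothetical positive interior maximum, so you need the critical-point/contradiction argument. (Incidentally, the strong maximum principle is overkill there: since $u\le A$ everywhere, $u(x_{0},t_{0})=A$ would make $x_{0}$ a spatial maximum of $u(\cdot,t_{0})$, forcing $\nabla u(x_{0},t_{0})=0$ and contradicting $|\nabla f|(x_{0},t_{0})>0$ directly.) The upshot of your route is the cleaner Hamilton-type bound $|\nabla u|^{2}/u^{2}\le(1/t+c)\ln(A/u)$ with $c=c(K)$, after which the stated form requires the absorption $ctu\ln(A/u)\le\lambda(A^{2}-\eta^{2})$; this does work with $\lambda$ depending only on $K,\eta,T$ (one checks $\sup_{A\ge\eta,\,u\in[\eta,A]}\frac{u\ln(A/u)}{A^{2}-\eta^{2}}=\frac{1}{2\eta}$), but you defer this verification, whereas the paper's choice of $P$ produces the stated constant $\lambda=\tfrac{7KT}{3\eta}$ with no extra step. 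Two small bookkeeping corrections: for a closed $G_{2}$-structure the scalar identity is $R=-|\mathbf{T}|^{2}=-\tfrac{1}{4}|\tau_{2}|^{2}$, not $-\tfrac{1}{2}|\tau_{2}|^{2}$; and since $\widehat{\mathbf{T}}\le 0$ as a quadratic form, the sharp one-sided bound on $-S(\nabla f,\nabla f)$ is $\tfrac{2}{3}|\mathbf{T}|^{2}|\nabla f|^{2}\le \tfrac{14K}{3}|\nabla f|^{2}$, so $c=\tfrac{14K}{3}$ suffices rather than the full operator norm of $S$.
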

 
\subsection{Parabolic frequency under the Laplacian $G_{2}$ flow}

In 1979, the (elliptic) frequency functional for a harmonic function $u(x)$ on $\mathbb{R}^{n}$ was introduced by Almgren in \cite{Dirichlet problem by Almgren}, which was defined by
$$
N(r)=\frac{\displaystyle{r\int_{B(r,p)}|\nabla u(x)|^{2}dx}}{\displaystyle{\int_{\partial B(r,p)}u^{2}(x)d\sigma}},
$$
where $d\sigma$ is the induced $(n-1)$-dimensional Hausdorff measure on $\partial B(r,p)$, $B(r,p)$ is the ball in $\mathbb{R}^{n}$ and $p$ is a fixed point in $\mathbb{R}^{n}$. Almgren obtained that $N(r)$ is monotone nondecreasing for $r$, and he used this property to investigate the local regularity of harmonic functions and minimal surfaces. Next, Garofalo and Lin \cite{Mon prop by Garofalo, Unique continuation by Garofalo} considered the monotonicity of frequency functional on Riemannian manifolds to study the unique continuation for elliptic operators. The frequency functional was also used to estimate the size of nodal sets in \cite{LA-2018, LA-2018 Yau}. For more applications, see \cite{Harmonic functions by Colding, H-H-L 1998, H-L 1994, frequency by Lin, Zelditch}.

The parabolic frequency for the solution of the heat equation on $\mathbb{R}^{n}$ was introduced by Poon in \cite{Unique continuation by Poon}, and  Ni \cite{N-2015} considered the case when $u(t)$ is a holomorphic function, both of them showed that the parabolic frequency is nondecreasing. Besides, on Riemannian manifolds, the monotonicity of the parabolic frequency was obtained by Colding and Minicozzi \cite{frequency on manifold} through the drift Laplacian operator. Using the matrix Harnack's inequality in \cite{heat equation by Hamilton}, Li and Wang \cite{Parabolic frequency by Li-Wang} investigated the parabolic frequency on compact Riemannian manifolds and the 2-dimensional Ricci flow.

In \cite{frequency on RF}, Baldauf-Kim  defined the following parabolic frequency for a solution $u(t)$ of the heat equation 
$$U(t)=-\frac{\tau(t)\|\nabla_{g(t)}u(t)\|_{L^{2}(d\nu)}^{2}}{\|u(t)\|_{L^{2}(d\nu)}^{2}}\cdot\exp\left\{\displaystyle{-\int_{t_{0}}^{t}\frac{1-\kappa(s)}{\tau(s)}ds}\right\},
$$
where $t\in[t_{0},t_{1}]\subset(0,T)$, $\tau(t)$ is the backwards time, $\kappa(t)$ is the time-dependent function and $d\nu$ is the weighted measure. They proved that parabolic frequency $U(t)$ for the solution of the heat equation is monotone increasing along the Ricci
flow with the bounded Bakry-\'{E}mery Ricci curvature and obtained the backward uniqueness. Baldauf, Ho and Lee derived analogous result to the mean curvature flow in \cite{frequency on MCF}.

Recently, Liu and Xu studied the monotonicity of parabolic frequency for the weighted $p$-Laplacian heat equation on Riemannian manifolds in \cite{Liu-xu 2022}, and they obtained a theorem of Hardy–\' Polya–Szeg\" o on K\" ahler manifolds under the K\" ahler-Ricci flow.  In \cite{Li-Zhang 2023}, Li and Zhang derived the matrix Li-Yau-Hamilton estimates for positive solutions to the heat equation and the backward conjugate heat equation under the Ricci flow, and then applied these estimates to study the monotonicity of the parabolic frequency.

In \cite{LLX-2023}, the authors studied the monotonicity of parabolic frequency under Ricci flow and the Ricci-harmonic flow on manifolds. They considered two cases: one is the monotonicity of parabolic frequency for the solution of the linear heat equation with bounded Bakry-\' Emery Ricci curvature, and the other case is the monotonicity of parabolic frequency for the solution of the heat equation with bounded Ricci curvature.

Inspired by \cite{LLX-2023}, we first study the parabolic frequency for the solution of the heat equation $\eqref{heat equation}$ under the Laplacian $G_{2}$ flow $\eqref{G2 flow}$ with bounded Ricci curvature.
The parabolic frequency for the positive solution of the heat equation $\eqref{heat equation}$ is defined by
\begin{align}
    U(t)=\exp&\left\{-\int_{t_{0}}^{t}\left[\frac{h'(s)}{h(s)}-\frac{2}{3}R_{0}+\frac{28+C_{1}(A,\eta)}{s}+cK+\frac{7}{2}C(s)\right]ds\right\}\notag\\
&\cdot\frac{\displaystyle{h(t)\int_{M}|\nabla_{g(t)}u(t)|_{g(t)}^{2}d \mu_{g(t)}}}{\displaystyle{\int_{M}u^{2}(t)\!\ d\mu_{g(t)}}}\notag
\end{align}
where $h(t)$ is a time-dependent function, $K$ and $c$ are both positive constants, 
$$
R_{0}=\min_{M\times[t_{0},t_{1}]}R(t), \ \ \ C_{1}(A,\eta)=\ln\frac{A}{\eta}+\lambda\frac{A^{2}}{\eta}, \ \ C(t)=\frac{C_{1}(A,\eta)}{t}
$$
and $\lambda$ is the constant in Theorem \ref{theorem 1.3},
$$
\eta=\min_{M}u(0), \ \ \ A=\max_{M}u(0). 
$$
Observe that, $A$ and $\eta$ are both positive constants. Using Theorem \ref{theorem 1.1} and Theorem \ref{theorem 1.3} as an application, we have

\begin{theorem}
Let $(M,\varphi(t))_{t\in[0,T]}$ be the solution of the Laplacian $G_{2}$ flow $\eqref{G2 flow}$ on a closed $7$-dimensional manifold $M$  with $T<+\infty$ and $-Kg(t)\leq {\rm Ric}(g(t))\leq0$, where $g(t)$ is the Riemannian metric associated with $\varphi(t)$ and $K$ is a positive constant. If $u(t)$ is a positive solution of the heat equation $\eqref{heat equation}$ with $\eta\leq u(0)\leq A$, then the following holds.

\begin{itemize} 

\item[(i)] If $h(t)$ is a negative time-dependent function, then the parabolic frequency $U(t)$ is monotone increasing along the Laplacian $G_{2}$ flow.

\item[(ii)] If $h(t)$ is a positive time-dependent function, then the parabolic frequency $U(t)$ is monotone decreasing along the Laplacian $G_{2}$ flow.

\end{itemize}
\end{theorem}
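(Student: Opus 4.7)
The plan is to treat both cases simultaneously by showing that $|U(t)|$ is non-increasing on $[t_{0},t_{1}]$: since $\mathrm{sgn}(U(t))=\mathrm{sgn}(h(t))$, case (i) (with $h<0$ and $U$ increasing) and case (ii) (with $h>0$ and $U$ decreasing) are each equivalent to $|U|$ being non-increasing. Writing
\begin{equation*}
I_{1}(t):=\int_{M}|\nabla_{g(t)}u(t)|_{g(t)}^{2}\,d\mu_{g(t)},\qquad I_{2}(t):=\int_{M}u^{2}(t)\,d\mu_{g(t)},
\end{equation*}
and noting that $\tfrac{d}{dt}\ln|h|=h'/h$ regardless of the sign of $h$, the $h'/h$ piece cancels between the prefactor $h(t)$ and the exponential in the definition of $U(t)$, so
\begin{equation*}
\frac{d}{dt}\ln|U(t)|=\frac{I_{1}'(t)}{I_{1}(t)}-\frac{I_{2}'(t)}{I_{2}(t)}+\frac{2}{3}R_{0}-\frac{28+C_{1}(A,\eta)}{t}-cK-\frac{7}{2}C(t),
\end{equation*}
and the theorem reduces to showing this quantity is $\leq 0$ for $t\in[t_{0},t_{1}]$.

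The next step is to differentiate $I_{1}$ and $I_{2}$ along the flow. Under the closed Laplacian $G_{2}$ flow Bryant's formula has the form $\partial_{t}g_{ij}=-2\mathrm{Ric}_{ij}+\mathcal{T}_{ij}$, where $\mathcal{T}$ is a torsion symmetric $2$-tensor whose trace is proportional to the scalar curvature via the closed $G_{2}$ identity $R=-|T|^{2}$, and $\partial_{t}d\mu=\tfrac{1}{2}\mathrm{tr}_{g}(\partial_{t}g)\,d\mu$. Combining with $\partial_{t}u=\Delta u$, integration by parts, and a Bochner-type manipulation of $|\nabla u|^{2}$ yield
\begin{align*}
I_{2}'(t)&=-2I_{1}(t)+\int_{M}\tfrac{1}{2}\mathrm{tr}(\partial_{t}g)\,u^{2}\,d\mu,\\
I_{1}'(t)&=-2\int_{M}(\Delta u)^{2}\,d\mu+\int_{M}\bigl[2\mathrm{Ric}(\nabla u,\nabla u)-\mathcal{T}(\nabla u,\nabla u)+\tfrac{1}{2}\mathrm{tr}(\partial_{t}g)|\nabla u|^{2}\bigr]d\mu,
\end{align*}
i.e.\ the Laplacian $G_{2}$ analogue of the Poon--Colding--Minicozzi identities augmented by the extra torsion term $\mathcal{T}(\nabla u,\nabla u)$.

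The sign of $I_{1}'/I_{1}-I_{2}'/I_{2}$ is then controlled in three pieces. First, the Cauchy--Schwarz inequality $I_{1}^{2}=\bigl(\int u\,\Delta u\,d\mu\bigr)^{2}\leq I_{2}\int(\Delta u)^{2}d\mu$ makes the leading kinetic contribution non-positive. Second, $-Kg\leq\mathrm{Ric}\leq 0$ absorbs the Ricci term into $cK$, while the torsion-trace terms $\mathrm{tr}(\partial_{t}g)$ and $\mathcal{T}(\nabla u,\nabla u)$ are bounded from below in terms of $R$ via the closed $G_{2}$ identity, hence from below by $R_{0}=\min R$, which is exactly the source of the $\tfrac{2}{3}R_{0}$ in the exponent of $U(t)$. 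Third, after renormalising by $I_{1}$ and $I_{2}$ there remain pointwise factors of $|\nabla u|^{2}/u^{2}$ to be estimated: these are dominated by Theorem \ref{theorem 1.1} (for a specific admissible choice of $\alpha,a,b$, contributing the $(28+C_{1}(A,\eta))/t$ and $cK$ pieces) and by Theorem \ref{theorem 1.3} together with $\eta\leq u(0)\leq A$ (contributing the $\tfrac{7}{2}C(t)$ piece through the constant $\lambda$). Assembling the three bounds gives $\tfrac{d}{dt}\ln|U(t)|\leq 0$ and hence the claimed monotonicity in both (i) and (ii).

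The principal obstacle will be the $G_{2}$-specific torsion bookkeeping: unlike in the Ricci flow treatment of \cite{LLX-2023}, the extra contribution $\mathcal{T}(\nabla u,\nabla u)$ and the non-Ricci part of $\mathrm{tr}(\partial_{t}g)$ are not controlled by the Ricci bound alone and must be converted into scalar-curvature contributions via $R=-|T|^{2}$, which is what forces $R_{0}=\min R$ into the exponent. Matching the precise numerical constants $\tfrac{2}{3}$, $28$ and $\tfrac{7}{2}$ across the evolution of the integrals, Theorem \ref{theorem 1.1} and Theorem \ref{theorem 1.3} is the fiddliest part of the computation.
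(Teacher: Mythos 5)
Your high-level plan (reduce to $\tfrac{d}{dt}\ln|U|\leq 0$, split into a Cauchy--Schwarz piece, a torsion/$R_{0}$ piece, and a gradient-estimate piece) is in the right spirit, but the concrete mechanism you propose has two errors in the bookkeeping and, more importantly, a step that cannot be closed under the hypotheses of this theorem.

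First, the measure evolution formula is wrong. The $d\mu_{g(t)}$ in $I_{1},I_{2}$ is the conjugate heat kernel measure, and it evolves by $\partial_{t}(d\mu)=-\tfrac{\Delta {\bf K}}{{\bf K}}d\mu$, not by $\tfrac{1}{2}\mathrm{tr}_{g}(\partial_{t}g)\,d\mu$ (that is the law for $dV_{g(t)}$). With the correct law, the torsion-trace term you introduce in $I_{2}'$ disappears: one gets the exact identity $I_{2}'=-2I_{1}$ (using $\partial_{t}u=\Delta u$), and likewise $\tfrac{d}{dt}\int|\nabla u|^{2}d\mu=\int(\partial_{t}-\Delta)|\nabla u|^{2}d\mu$, which is exactly Lemma~\ref{lemma 4.1}. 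Second, the integration-by-parts identity underlying your Cauchy--Schwarz step should read $I_{1}=-\int u\,\Delta_{f}u\,d\mu$, not $-\int u\,\Delta u\,d\mu$: with the weighted measure, the adjoint operator is the drift Laplacian $\Delta_{f}$, not $\Delta$.

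The more serious obstruction is that the Cauchy--Schwarz route requires a Bakry--\'Emery bound that is not available here. Cauchy--Schwarz gives $I_{1}^{2}\leq I_{2}\int(\Delta_{f}u)^{2}d\mu$, while the derivative of $D(t)$ (by Lemma~\ref{lemma 4.1}, after the Ricci cancellation from the flow) produces $\int|\nabla^{2}u|^{2}d\mu$. Matching these two requires Lemma~\ref{lemma 5.1}, $\int|\nabla^{2}u|^{2}d\mu=\int\left[(\Delta_{f}u)^{2}-\text{\rm Ric}_{f}(\nabla u,\nabla u)\right]d\mu$, which demands a bound on $\text{\rm Ric}_{f}=\text{\rm Ric}+\nabla^{2}f$. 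Theorem~\ref{theorem 4.2} only assumes $-Kg\leq\text{\rm Ric}\leq 0$ and gives no control on $\nabla^{2}f$; this is precisely what is supplied in Theorem~\ref{theorem 5.2}, not here. The paper's argument therefore does \emph{not} use Cauchy--Schwarz. Instead it uses the pointwise trace inequality $|\nabla^{2}u|^{2}\geq\tfrac{1}{7}(\Delta u)^{2}$ on the $D'(t)$ side, and on the $I'(t)$ side it deliberately forgoes the exact identity $I'=-2I_{1}$: it estimates $I'$ from below by invoking the Li--Yau bound of Theorem~\ref{theorem 3.1} (with $\alpha=2$, $a=\tfrac14$, $b=\tfrac18$) together with Young's inequality applied to $-2\int u\Delta u\,d\mu$, which is what introduces the $\int(\Delta u)^{2}d\mu$ term. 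Finally, the Hamilton estimate of Theorem~\ref{theorem 3.4} (together with $\eta\leq u\leq A$ from the maximum principle) gives the pointwise bound $|\nabla u|^{2}\leq C(t)u^{2}$, i.e.\ $\int|\nabla u|^{2}d\mu\leq C(t)I(t)$, and the choice $C(t)=C_{1}(A,\eta)/t$ is exactly what makes the two $\int(\Delta u)^{2}$ contributions cancel. Without this trace-inequality/Hamilton-estimate mechanism your argument does not close, because there is no way to trade $\int|\nabla^{2}u|^{2}d\mu$ for $\int(\Delta_{f}u)^{2}d\mu$ under a Ricci bound alone.
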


Besides, we consider the parabolic frequency for the solution of the linear heat equation 
\begin{equation}
    (\partial_{t}-\Delta_{g(t)})u(t)=a(t)\!\ u(t)\label{lhe}
\end{equation}
under the Laplacian $G_{2}$ flow $\eqref{G2 flow}$ with bounded Bakry-\'{E}mery Ricci curvature, where $a(t)$ is a time-dependent smooth function. The parabolic frequency is defined by  
\begin{align}
U(t)=\exp\left\{-\int_{t_{0}}^{t}\left[-\frac{2}{3}R_{0}+\frac{h'(s)+\kappa(s)}{h(s)}\right] ds\right\}\frac{\displaystyle{h(t)\int_{M}|\nabla_{g(t)}u(t)|_{g(t)}^{2}d \mu_{g(t)}}}{\displaystyle{\int_{M}u^{2}(t)\!\ d\mu_{g(t)}}},\notag
\end{align}
where $\displaystyle{R_{0}=\min_{M\times[t_{0},t_{1}]}R(t)}$, $h(t)$ and $\kappa(t)$ are both time-dependent smooth functions. Then we get the following theorem, where ${\rm Ric}_{f(t)}$ is given in $\eqref{Baker}$.

\begin{theorem}\label{theorem 1.5}
Let $(M,\varphi(t))_{t\in[0,T]}$ be the solution of the Laplacian $G_{2}$ flow $\eqref{G2 flow}$ on a closed $7$-dimensional manifold $M$  with $T<+\infty$ and $\displaystyle{\text{\rm Ric}_{f(t)}\leq \frac{\kappa(t)}{2h(t)}g(t)}$, where $g(t)$ is the Riemannian metric associated with $\varphi(t)$. Then the following holds.

\begin{itemize} 

\item[(i)] If $h(t)$ is a negative time-dependent function, then the parabolic frequency $U(t)$ is monotone increasing along the Laplacian $G_{2}$ flow.

\item[(ii)] If $h(t)$ is a positive time-dependent function, then the parabolic frequency $U(t)$ is monotone decreasing along the Laplacian $G_{2}$ flow.

\end{itemize}
\end{theorem}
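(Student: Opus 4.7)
The strategy follows the Baldauf--Kim approach for the Ricci flow, adapted to the Laplacian $G_{2}$ flow. Set $I(t)=\int_{M}u^{2}(t)\,d\mu_{g(t)}$ and $D(t)=\int_{M}|\nabla_{g(t)}u(t)|^{2}_{g(t)}\,d\mu_{g(t)}$, so that a logarithmic differentiation gives
$$\frac{d}{dt}\log U(t)=\frac{D'(t)}{D(t)}-\frac{I'(t)}{I(t)}+\frac{2}{3}R_{0}-\frac{\kappa(t)}{h(t)},$$
the explicit $h'(t)/h(t)$ contributions having cancelled. The task is then to bound this combination so that its sign is dictated by the sign of $h(t)$.

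First I would record the geometric evolutions. Under the Laplacian $G_{2}$ flow one has $\partial_{t}d\mu_{g(t)}=-\tfrac{2}{3}R(t)\,d\mu_{g(t)}$ (using $R\leq 0$ on a closed $G_{2}$-structure) together with an explicit expansion $\partial_{t}g^{ij}=2R^{ij}+(\text{torsion terms})$. Combined with the linear heat equation $\partial_{t}u=\Delta_{g(t)}u+a(t)u$, this yields
$$I'(t)=-2D(t)+2a(t)I(t)-\tfrac{2}{3}\!\int_{M}\!R(t)\,u^{2}(t)\,d\mu_{g(t)}.$$
An analogous but longer computation for $D'(t)$, using integration by parts and the Bochner identity for $\partial_{t}|\nabla u|^{2}$, produces a Bochner term $-2\int_{M}{\rm Ric}(\nabla u,\nabla u)\,d\mu_{g(t)}$ that cancels against the Ricci piece of $\partial_{t}g^{ij}$; what survives takes the form $-2\int_{M}{\rm Ric}_{f(t)}(\nabla u,\nabla u)\,d\mu_{g(t)}$, where $f(t)$ is the Bakry--\'Emery potential appearing in the hypothesis.

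Next, I would apply the curvature assumption ${\rm Ric}_{f(t)}\leq \tfrac{\kappa(t)}{2h(t)}g(t)$ to dominate this surviving term by $-\tfrac{\kappa(t)}{h(t)}D(t)$, and use the Cauchy--Schwarz inequality
$$D(t)^{2}=\Bigl(-\!\int_{M}u\,\Delta_{g(t)}u\,d\mu_{g(t)}\Bigr)^{\!2}\leq I(t)\!\int_{M}(\Delta_{g(t)}u)^{2}\,d\mu_{g(t)},$$
in which $\Delta_{g(t)}u=\partial_{t}u-a(t)u$ on the right-hand side, to compare $D'/D$ with $I'/I$ up to terms of definite sign. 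The bound $R(t)\geq R_{0}$ then absorbs the $-\tfrac{2}{3}\!\int_{M}R(t)u^{2}\,d\mu_{g(t)}/I(t)$ contribution into the $\tfrac{2}{3}R_{0}$ summand. Assembling these estimates produces an identity of the form $\tfrac{d}{dt}\log U(t)=h(t)^{-1}\cdot(\text{nonnegative quantity})$, so that $U(t)$ is monotone increasing when $h(t)<0$ and monotone decreasing when $h(t)>0$, giving the two cases of the theorem.

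The main obstacle I anticipate is the torsion bookkeeping: unlike in the Ricci flow case of Baldauf--Kim, the evolution $\partial_{t}g^{ij}$ under the Laplacian $G_{2}$ flow carries additional symmetric tensors built from the intrinsic torsion of the closed $G_{2}$-structure, and these must be regrouped as $\nabla^{2}f(t)$ for a suitably chosen potential so that the Bakry--\'Emery bound is exactly the hypothesis that closes the argument. Verifying that the signs and numerical coefficients among the metric evolution, the volume evolution, and the Bochner identity line up consistently is where the technical work will concentrate.
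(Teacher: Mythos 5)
Your overall strategy is the same as the paper's: differentiate $\log U$, estimate $D'/D - I'/I$, and close the argument with a Cauchy--Schwarz inequality, exactly in the Baldauf--Kim spirit. But there are two related technical errors that break your computation, both stemming from mishandling the weighted measure $d\mu_{g(t)} = \mathbf{K}(t)\,dV_{g(t)}$ built from the conjugate heat kernel.

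First, you assert $\partial_t d\mu_{g(t)} = -\tfrac{2}{3}R(t)\,d\mu_{g(t)}$. That is the evolution of the \emph{unweighted} volume form $dV_{g(t)}$. Since $\mathbf{K}(t)$ solves the conjugate heat equation $\partial_t\mathbf{K} = -\Delta\mathbf{K} + \tfrac{2}{3}R\mathbf{K}$, the $\tfrac{2}{3}R$ contribution cancels and one gets $\partial_t d\mu_{g(t)} = -(\Delta_{g(t)}\mathbf{K}/\mathbf{K})\,d\mu_{g(t)}$. With that, the paper's $I'(t)$ is simply $I'(t) = -\tfrac{2}{h}D(t) + 2a(t)I(t)$ — there is no $-\tfrac{2}{3}\int R\,u^2\,d\mu$ term to absorb with $R_0$. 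Your extra term is an artifact of the wrong measure evolution, and your later step that ``absorbs it into the $\tfrac{2}{3}R_0$ summand'' is therefore fixing a spurious difficulty; the $R_0$ actually comes from bounding the $\tfrac{2}{3}|\mathbf{T}|^2|\nabla u|^2$ torsion term (with $|\mathbf{T}|^2 = -R$) inside $D'(t)$.

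Second, and this is the more consequential gap: because $d\mu$ is weighted, integration by parts gives $\int_M u\,\Delta_{g(t),f(t)}u\,d\mu = -\int_M|\nabla u|^2\,d\mu$ with the \emph{drift} Laplacian $\Delta_f = \Delta - \langle\nabla f,\nabla\cdot\rangle$, not the plain Laplacian. Your Cauchy--Schwarz step writes $D(t) = -\int u\,\Delta_{g(t)}u\,d\mu$, which is false in general; the equality holds only for $\Delta_f$. The Bakry--\'Emery hypothesis $\mathrm{Ric}_{f}\le\tfrac{\kappa}{2h}g$ then enters through the weighted Bochner integral identity $\int|\nabla^2 u|^2\,d\mu = \int\bigl(|\Delta_f u|^2 - \mathrm{Ric}_f(\nabla u,\nabla u)\bigr)d\mu$ (the paper's Lemma~5.1), not, as your plan suggests, as a residue of the Bochner $\mathrm{Ric}$ term cancelling the $\mathrm{Ric}$ piece of $\partial_t g^{ij}$. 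That cancellation does happen (it is why Lemma~4.1 has no curvature term), but what survives is $-2|\nabla^2 u|^2$ plus torsion terms, and it is only after integrating and invoking Lemma~5.1 that $\mathrm{Ric}_f$ and $\Delta_f u$ appear. Until you replace $\Delta$ by $\Delta_f$ in your $D$--identity and in the Cauchy--Schwarz step, and correct the measure evolution, the sign argument does not close.
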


The backward uniqueness of solutions to parabolic equations has been the object of consistent study for at least half a century. There are already many results for heat operators concerning it in various domains, such as the exterior domain \cite{BU in exterior domain}, the half-space \cite{BU in half-space} and some cones \cite{BU in cones 2012, BU in cones}. For the heat equation on manifolds, Colding and Minicozzi \cite{frequency on manifold} obtained the backward uniqueness result. Kotschwar showed a backward uniqueness result to Ricci flow in \cite{BU on RF} and gave a general backward uniqueness theorem in \cite{BU on GE}. For more backward uniqueness results of geometric flows, see \cite{BU on YF, BU on open MCF, BU on MCF 2019, BU on inverse MCF, BU on MCF by zhang}.

As an application of Theorem \ref{theorem 1.5}, we get the following backward uniqueness.
\begin{corollary}
Let $(M,\varphi(t))_{t\in[0,T]}$ be the solution of the Laplacian $G_{2}$ flow $\eqref{G2 flow}$ on a closed $7$-dimensional manifold $M$  with $T<+\infty$ and $\displaystyle{\text{\rm Ric}_{f(t)}\leq \frac{\kappa(t)}{2h(t)}g(t)}$, where $g(t)$ is the Riemannian metric associated with $\varphi(t)$ .  If $u(t_{1})=0$, then $u(t)\equiv 0$ for any $t\in[t_{0},t_{1}]\subset(0,T)$.
\end{corollary}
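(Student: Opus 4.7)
The plan is to argue by contradiction from the monotonicity of the parabolic frequency established in Theorem \ref{theorem 1.5}. Suppose on the contrary that $u \not\equiv 0$ somewhere on $[t_{0},t_{1}]$, and pick $t_{\ast}\in[t_{0},t_{1})$ for which $u(t_{\ast})\not\equiv 0$, so that $I(t):=\int_{M}u^{2}(t)\, d\mu_{g(t)}$ satisfies $I(t_{\ast})>0$ while $I(t_{1})=0$. The objective is to derive a differential inequality of the form $\frac{d}{dt}\log I(t)\geq -C$ on the maximal subinterval of $[t_{\ast},t_{1}]$ on which $I>0$; integrating this inequality forces $I(t_{1})\geq I(t_{\ast})\,e^{-C(t_{1}-t_{\ast})}>0$, which is the desired contradiction.

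Using the equation $(\partial_{t}-\Delta_{g(t)})u=a(t)u$, integration by parts, and the identity $\partial_{t}\,d\mu_{g(t)}=\tfrac{1}{2}\,\mathrm{tr}_{g(t)}(\partial_{t}g(t))\, d\mu_{g(t)}$ along the Laplacian $G_{2}$ flow, one computes
\begin{equation}
\frac{d}{dt}\log I(t) \;=\; -\,2\,\frac{\int_{M}|\nabla_{g(t)}u|_{g(t)}^{2}\, d\mu_{g(t)}}{I(t)} \;+\; 2a(t) \;+\; \Phi(t),\notag
\end{equation}
where $\Phi(t)$ collects the contribution from $\mathrm{tr}_{g(t)}(\partial_{t}g(t))$ and is bounded on $[t_{0},t_{1}]$ by the standing hypothesis on $\mathrm{Ric}_{f(t)}$. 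Recognizing the Rayleigh quotient appearing in the definition of $U(t)$, one rewrites
\begin{equation}
\frac{\int_{M}|\nabla_{g(t)}u|_{g(t)}^{2}\, d\mu_{g(t)}}{I(t)} \;=\; \frac{U(t)}{h(t)\,E(t)},\qquad E(t):=\exp\!\left\{-\!\int_{t_{0}}^{t}\!\left[-\tfrac{2}{3}R_{0}+\tfrac{h'(s)+\kappa(s)}{h(s)}\right]ds\right\}.\notag
\end{equation}

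Now the monotonicity from Theorem \ref{theorem 1.5} is invoked. In case (i), $h<0$ and $U$ is increasing, so $U(t)\geq U(t_{\ast})$; dividing by the negative quantity $h(t)$ flips the sign and gives $U(t)/h(t)\leq U(t_{\ast})/h(t)$, which is bounded on $[t_{\ast},t_{1}]$ once $h$ is continuous and bounded away from zero. Case (ii) is symmetric. In either case the Rayleigh quotient is bounded from above uniformly, which yields the advertised differential inequality and closes the argument. The main technical obstacle is producing a precise and uniformly bounded expression for $\tfrac{1}{2}\,\mathrm{tr}_{g(t)}(\partial_{t}g(t))$ under the Laplacian $G_{2}$ flow, using only the bound on the Bakry-\'Emery Ricci curvature; once this is in hand, the ODE argument proceeds essentially as in the Ricci-flow setting of \cite{frequency on RF}.
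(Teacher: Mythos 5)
Your overall strategy—use the monotonicity of $U(t)$ from Theorem~\ref{theorem 1.5} to bound the Rayleigh quotient, obtain a lower bound on $\frac{d}{dt}\log I(t)$, and integrate to show $I(t_{1})>0$—is the same as the paper's proof of this corollary, and the comparison $U(t)\geq U(t_{0})$ combined with the sign of $h$ is exactly what the paper uses to control the Rayleigh quotient in the integral.

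However, there is a genuine error in the computation of $I'(t)$. You state the measure evolution as $\partial_{t}\,d\mu_{g(t)}=\tfrac{1}{2}\,\mathrm{tr}_{g(t)}(\partial_{t}g(t))\,d\mu_{g(t)}$ and introduce a bounded remainder $\Phi(t)$ from this. That identity governs the evolution of the \emph{Riemannian volume form} $dV_{g(t)}$, not of the conjugate heat kernel measure $d\mu_{g(t)}={\bf K}(t)\,dV_{g(t)}$. The paper's equation~\eqref{volume} gives the correct evolution, namely
\begin{equation}
\partial_{t}(d\mu_{g(t)})=-\frac{\Delta_{g(t)}{\bf K}(t)}{{\bf K}(t)}\,d\mu_{g(t)},
\end{equation}
and after integration by parts the contribution to $I'(t)$ is $-\int_{M}\Delta(u^{2})\,d\mu=-\int_{M}(2u\Delta u+2|\nabla u|^{2})\,d\mu$. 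Combining with $2\int_{M}u\,\partial_{t}u\,d\mu$ and the equation $(\partial_{t}-\Delta)u=au$, this produces exactly $I'(t)=2a(t)I(t)-\tfrac{2}{h(t)}D(t)$ (equation~(5.3) in the paper), with \emph{no} residual $\Phi(t)$ term. The conjugate heat kernel measure is chosen precisely so that this cancellation happens. Consequently the ``main technical obstacle'' you flag—bounding $\tfrac{1}{2}\mathrm{tr}_{g(t)}(\partial_{t}g(t))$ via the Bakry--\'Emery bound—does not actually arise; once the measure evolution is corrected, your differential inequality holds with $\Phi\equiv 0$ and the remainder of your argument goes through as in the paper.
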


For the general parabolic equation, we have

\begin{theorem}
Let $(M,\varphi(t))_{t\in[0,T]}$ be the solution of the Laplacian $G_{2}$ flow $\eqref{G2 flow}$ on a closed $7$-dimensional manifold $M$  with $T<+\infty$ and $\displaystyle{\text{\rm Ric}_{f(t)}\leq \frac{\kappa(t)}{2h(t)}g(t)}$, where $g(t)$ is the Riemannian metric associated with $\varphi(t)$ and $h(t)$ is a negative time-dependent function.
    Suppose $u(t):M\times[t_{0},t_{1}]\rightarrow \mathbb{R}$ satisfies 
    $$
    \left|\left(\partial_{t}-\Delta_{g(t)}\right) u(t)\right|\leq C(t)\left[|\nabla_{g(t)} u(t)|_{g(t)}+|u(t)|\right]
    $$ 
    along the Laplacian $G_{2}$ flow $\eqref{G2 flow}$. If $u(t_{1})=0$, then $u(t)\equiv 0$ for all $t\in[t_{0},t_{1}]\subset(0,T)$.
\end{theorem}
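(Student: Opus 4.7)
The plan is to adapt the parabolic frequency method used in the proof of Theorem~\ref{theorem 1.5} and its backward uniqueness corollary to the present differential-inequality setting. Writing $E:=(\partial_{t}-\Delta_{g(t)})u$, we have $|E|\leq C(t)(|\nabla_{g(t)}u|_{g(t)}+|u|)$; the idea is to mimic the frequency argument while treating $E$ as an additive perturbation absorbable by Young's inequality. The strategy has three stages: (i) derive differential inequalities for $I(t):=\int_{M}u^{2}e^{-f(t)}d\mu_{g(t)}$ and $D(t):=\int_{M}|\nabla_{g(t)}u|_{g(t)}^{2}e^{-f(t)}d\mu_{g(t)}$; (ii) combine them into a differential inequality for the normalized frequency $N(t):=h(t)D(t)/I(t)$ and apply Gr\"onwall; (iii) use the resulting boundedness of $N$ to conclude that $I$ cannot vanish backwards in time unless $u\equiv 0$.

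First I would compute $I'(t)$ and $D'(t)$ following the scheme of Theorem~\ref{theorem 1.5}. The Laplacian $G_{2}$-flow volume-form evolution together with standard integration by parts gives
\[
I'(t)=-2D(t)+2\int_{M}uE\,e^{-f(t)}d\mu_{g(t)}+R_{1}(t),
\]
\[
D'(t)=-2\int_{M}(\Delta_{f(t)}u)^{2}e^{-f(t)}d\mu_{g(t)}+2\int_{M}\langle\nabla u,\nabla E\rangle_{g(t)}\,e^{-f(t)}d\mu_{g(t)}+R_{2}(t),
\]
where $R_{1}(t),R_{2}(t)$ collect the metric-variation contributions and are controlled by bounded multiples of $I(t)$ and $D(t)$ via the hypothesis ${\rm Ric}_{f(t)}\leq \kappa(t)/(2h(t))\,g(t)$, exactly as in Theorem~\ref{theorem 1.5}. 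The $E$-perturbations are then estimated by $|\int uE\,e^{-f}d\mu|\leq \varepsilon D(t)+C_{\varepsilon}C(t)^{2}I(t)$ using Cauchy--Schwarz and $|E|\leq C(t)(|\nabla u|+|u|)$, while $\int\langle\nabla u,\nabla E\rangle e^{-f}d\mu$ is rewritten as $-\int E(\Delta_{f(t)}u)e^{-f}d\mu$ and split against the leading term $-2\int(\Delta_{f(t)}u)^{2}e^{-f}d\mu$ in $D'(t)$ by Young's inequality.

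Combining these estimates yields $I'(t)\geq -3D(t)-B_{1}(t)I(t)$ and $D'(t)\geq -\alpha\int(\Delta_{f(t)}u)^{2}e^{-f}d\mu-B_{2}(t)(D(t)+I(t))$ with $\alpha$ slightly bigger than $2$ and $B_{1},B_{2}\in L^{\infty}[t_{0},t_{1}]$. The Cauchy--Schwarz inequality $D(t)^{2}\leq I(t)\int(\Delta_{f(t)}u)^{2}e^{-f}d\mu$, which is the source of monotonicity in the equality case, turns this system into a differential inequality $N'(t)\geq -\Phi(t)N(t)-\Psi(t)$ with $\Phi,\Psi\in L^{\infty}[t_{0},t_{1}]$. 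Gr\"onwall then provides a uniform upper bound on $N(t)$ on any subinterval of $[t_{0},t_{1}]$ where $I(t)>0$; since $h(t)<0$ is bounded away from zero, this is equivalent to a uniform bound on $D(t)/I(t)$ there.

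Finally, suppose toward contradiction that $u\not\equiv 0$, and set $t^{\ast}:=\sup\{t\in[t_{0},t_{1}]:I(t)>0\}$, so that $I(t^{\ast})=0$ by continuity. From $I'/I\geq -3D/I-B_{1}$ together with the uniform bound on $D/I$ one gets $|(\log I)'(t)|\leq B(t)\in L^{1}[t_{0},t^{\ast}]$, and integrating from $t_{0}$ to $t^{\ast}$ produces $I(t^{\ast})\geq I(t_{0})\exp\bigl(-\int_{t_{0}}^{t^{\ast}}B\,ds\bigr)>0$, contradicting $I(t^{\ast})=0$. Hence $u\equiv 0$ on $[t_{0},t_{1}]$. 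The main obstacle will be the interaction between the $E$-error in $D'(t)$ and the leading term $-2\int(\Delta_{f(t)}u)^{2}e^{-f}d\mu$: Young's inequality has to be applied with a parameter chosen carefully enough that this leading negative term survives after absorbing $\int E(\Delta_{f(t)}u)e^{-f}d\mu$, and the signs of $h(t)$ and the ${\rm Ric}_{f(t)}$ bound must be tracked through every step so that the final $N$-inequality points in the correct direction for the Gr\"onwall argument to close.
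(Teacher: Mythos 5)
Your high-level plan---a Gr\"onwall bound for the frequency quotient, followed by a lower bound on $(\log I)'$ and a contradiction with $I(t_1)=0$---matches the paper's (Theorem~5.5 and its corollary). But the central technical step, absorbing the perturbation $E:=(\partial_t-\Delta)u$ via Young's inequality into the leading term $-2\int(\Delta_{f}u)^2\,d\mu$ of $D'(t)$, does not close, and this is where the paper does something genuinely different.

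Concretely: after Young you propose $D'(t)\geq -\alpha\int(\Delta_f u)^2\,d\mu - B_2(D+I)$ with $\alpha$ slightly larger than $2$. Combining with $I'= -2D + 2\int uE\,d\mu$ and forming $ID'-DI'$, the Cauchy--Schwarz inequality $D^2 \leq I\int(\Delta_f u)^2$ bounds $-\alpha I\int(\Delta_f u)^2 + 2D^2$ from \emph{above} by $(2-\alpha)D^2 \leq 0$, but gives no \emph{lower} bound: the difference $I\int(\Delta_f u)^2 - D^2$ is a Cauchy--Schwarz deficit which is not controlled by any multiple of $D$ or $I$, so $(\alpha-2)I\int(\Delta_f u)^2$ can be arbitrarily negative. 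Consequently $N'$ does not satisfy the linear inequality $N'\geq -\Phi N-\Psi$ you claim; tracking the estimate carefully also produces a term quadratic in $D/I$ coming from $D\cdot(2\int uE)$, so even the qualitative shape of the differential inequality is wrong, and Gr\"onwall in the form you invoke does not apply.

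What the paper does instead is complete the square \emph{before} any inequality is used: it writes $D'$ in terms of $\bigl|\Delta_f u + \tfrac12(\partial_t-\Delta)u\bigr|^2 - \tfrac14\bigl|(\partial_t-\Delta)u\bigr|^2$, and simultaneously rewrites $I'$ and $D$ in terms of $A=\int u\bigl(\Delta_f u + \tfrac12(\partial_t-\Delta)u\bigr)d\mu$ and $B=\int u(\partial_t-\Delta)u\,d\mu$, so that $I'D = -2hA^2 + \tfrac{h}{2}B^2$ exactly (the cross terms cancel). Then $ID'-I'D$ decomposes into $-2h\bigl(I\int W^2 - A^2\bigr)$, which is nonnegative by Cauchy--Schwarz and can be dropped, plus a single remainder $\tfrac{h}{2}I\int E^2$ (and a term of the right sign that is also dropped), which is finally estimated through $|E|\leq C(|\nabla u|+|u|)$ and gives precisely the linear inequality $U'(t)\geq C^2(t)\bigl(U(t)+h(t)\exp\{\cdots\}\bigr)$. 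The $\tfrac12$ in the completion of the square is exactly what makes the cross terms vanish; a Young-type split with a free parameter cannot reproduce this cancellation. So the missing ingredient in your proposal is this algebraic identity, and without it the error term is not absorbable.
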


${}$

We give an outline of this paper. We review the basic theory in Section \ref{section2} about $G_{2}$-structures, $G_{2}$-decompositions of $2$-forms and $3$-forms, and the torsion tensors of $G_{2}$-structures. We also calculate the conjugate heat equation under the Laplacian $G_{2}$ flow $\eqref{G2 flow}$. Section \ref{section 3} proves the Li-Yau type gradient estimate and Hamilton type gradient estimate under the Laplacian $G_{2}$ flow $\eqref{G2 flow}$ with bounded Ricci curvature, and as an application, we get the Harnack inequality on spacetime. In Section \ref{section 4}, using the Li-Yau type gradient estimate and Hamilton type gradient estimate, we prove the monotonicity of parabolic frequency for the solution of the linear equation $\eqref{lhe}$ under the Laplacian $G_{2}$ flow $\eqref{G2 flow}$ with bounded Ricci curvature, then we get the integral-type Harnack inequality. In Section \ref{section 5}, we consider the monotonicity of parabolic frequency for the heat equation and the general parabolic equation under the Laplacian $G_{2}$ flow $\eqref{G2 flow}$ with bounded  Bakry-\'{E}mery Ricci curvature, and obtain the backward uniqueness.

\section{$G_{2}$-structure, Notations and definitions }\label{section2}

In this section, we introduce the $G_{2}$-structure on manifolds, $G_{2}$-decompositions, the torsion tensor, some notations, and definitions.

\subsection{$G_{2}$-structure on smooth manifolds}
Let $\mathbb{O}$ be the octonions (exceptional division algebra), from the vector cross product ``$\times$" on ${\rm Im}\ \mathbb{O}$, we can define the 3-form by
$$\phi(a,b,c):=\frac{1}{2}\langle a,[b,c]\rangle=\langle a\times b,c\rangle\quad\quad\quad\text{for}\ a,b,c\in {\rm Im}\ \mathbb{O}.$$
Let $\{e_{1},e_{2},\cdots,e_{7}\}$ denote the standard basis of $\mathbb{R}^{7}$ and $\{e^{1},e^{2},\cdots,e^{7}\}$ be its dual basis. Using the octonion multiplication table, one can show that
$$\phi=e^{123}+e^{145}+e^{167}+e^{246}-e^{257}-e^{347}-e^{356},$$
where $e^{ijk}:=e^{i}\wedge e^{j}\wedge e^{k}$. When we fix $\phi$, the subgroup of ${\rm GL}(7,\mathbb{R})$ is the exceptional Lie group $G_{2}$, which is a compact, connected, simple $14$-dimensional Lie subgroup of ${\rm SO}(7)$. In fact, $G_{2}$ acts irreducibly on $\mathbb{R}^{7}$ and preserves the metric and orientation for which $\{e_{1},e_{2},\cdots,e_{7}\}$ is an oriented orthonormal basis. Note that $G_{2}$ also preserves the $4$-form
$$\ast_{\phi}\phi=e^{4567}+e^{2367}+e^{2345}+e^{1357}-e^{1346}-e^{1256}-e^{1247},$$
where $\ast_{\phi}$ is the Hodge star operator determined by the metric and orientation.

\begin{remark}
    The vector cross product ``$\times$" is an algebraic structure defined in a normed division algebra. Therefore, the $G_{2}$-structure can only be defined in the 7-dimensional manifold. For more details, see \cite{Spiros introduce to G2}.
\end{remark}

For a smooth $7$-manifold $M$ and a point $x\in M$, we define as in \cite{local curvature of G2,G2 flow}
\begin{align}
    \wedge_{+}^{3}(T_{x}^{\ast}M):=\left\{\varphi_{x}\in\wedge^{3}(T_{x}^{\ast}M)\ \Big{|}\ h^{\ast}\phi=\varphi_{x},\
    \text{for\ invertible}\ h\in\text{Hom}_{\mathbb{R}}(T_{x}^{\ast}M,\mathbb{R}^{7})\right\}\notag
\end{align}
and the bundle
\begin{align}
    \wedge_{+}^{3}(T^{\ast}M):=\bigcup_{x\in M}\wedge_{+}^{3}(T_{x}^{\ast}M)\notag.
\end{align}
We call a section $\varphi$ of $\wedge_{+}^{3}(T^{\ast}M)$ a {\it positive $3$-form} on $M$ or a {\it $G_{2}$-structure} on $M$, and denote the space of positive 3-form by $\Omega^{3}_{+}(M)$. The existence of $G_{2}$-structure is equivalent to the
property that $M$ is oriented and spin, which is equivalent to the vanishing of the first two Stiefel-Whitney classes $\omega_{1}(TM)$ and $\omega_{2}(TM)$. For more details, see Theorem 10.6 in \cite{Lawson-Michelsohn}.

For a $3$-form $\varphi$, we define a $\Omega^{7}(M)$-valued bilinear form $\text{B}_{\varphi}$ by
$$\text{B}_{\varphi}(u,v)=\frac{1}{6}(u\lrcorner\varphi)\wedge(v\lrcorner\varphi)\wedge\varphi,$$
where $u, v$ are tangent vectors on $M$ and $``\lrcorner"$ is the interior multiplication operator (\textit{Here we use the orientation in \cite{G2 structure Bryant}}). Then we can see that any $\varphi\in\Omega^{3}_{+}(M)$ determines a Riemannian metric $g_{\varphi}$ and an orientation $d V_{\varphi}$, hence the Hodge star operator $\ast_{\varphi}$ and the associated $4$-form
$$\psi:=\ast_{\varphi}\varphi$$
can also be uniquely determined by $\varphi$. 

The group $G_{2}$ acts irreducibly on $\mathbb{R}^{7}$ (and hence on $\wedge^{1}(\mathbb{R}^{7})^{\ast}$ and $\wedge^{6}(\mathbb{R}^{7})^{\ast}$), but it
acts reducibly on $\wedge^{k}(\mathbb{R}^{7})^{\ast}$ for $2\leq k\leq 5$. Hence a $G_{2}$ structure $\varphi$ induces splittings
of the bundles $\wedge^{k}(T^{\ast}M)(2\leq k\leq5)$ into direct summands, which we denote by
$\wedge^{k}_{l}(T^{\ast}M,\varphi)$ with $l$ being the rank of the bundle. We let the space of sections
of $\wedge^{k}_{l}(T^{\ast}M,\varphi)$ be $\Omega^{k}_{l}(M)$. Define the natural projections
$$\pi^{k}_{l}:\Omega^{k}(M)\longrightarrow \Omega^{k}_{l}(M),\ \ \alpha\longmapsto \pi^{k}_{l}(\alpha).$$
Then we have
\begin{align}
    \Omega^{2}(M)&=\Omega^{2}_{7}(M)\oplus\Omega^{2}_{14}(M),\notag\\
    \Omega^{3}(M)&=\Omega^{3}_{1}(M)\oplus\Omega^{3}_{7}(M)\oplus\Omega^{3}_{27}(M)\notag.
\end{align}
where each component is determined by
\begin{align}
    \Omega^{2}_{7}(M)&=\{X\lrcorner\varphi:X\in C^{\infty}(TM)\}=\{\beta\in\Omega^{2}(M):\ast_{\varphi}(\varphi\wedge\beta)=2\beta\},\notag\\
    \Omega^{2}_{14}(M)&=\{\beta\in\Omega^{2}(M):\psi\wedge\beta=0\}=\{\beta\in\Omega^{2}(M):\ast_{\varphi}(\varphi\wedge\beta)=-\beta\},\notag
\end{align}
and
\begin{align}
    \Omega^{3}_{1}(M)&=\{f\varphi:f\in C^{\infty}(M)\},\notag\\
    \Omega^{3}_{7}(M)&=\{\ast_{\varphi}(\varphi\wedge\alpha):\alpha\in\Omega^{1}(M)\}=\{X\lrcorner\psi:X\in C^{\infty}(TM)\},\notag\\
    \Omega^{3}_{27}(M)&=\{\eta\in\Omega^{3}(M):\eta\wedge\varphi=\eta\wedge\psi=0\}.\notag
\end{align}

\begin{remark}\label{remark2.1}
    $\Omega^{4}$ and $\Omega^{5}$ have the corresponding decompositions by Hodge duality. The more details for $G_{2}$-decompositions see \cite{G2 structure Bryant, Spiros introduce to G2}.
\end{remark}

By the definition  of $G_{2}$ decompositions, we can find unique differential forms
$\tau_{0}\in\Omega^{0}(M),\tau_{1},\widetilde{\tau}_{1}\in\Omega^{1}(M),\tau_{2}\in\Omega^{2}_{14}(M)$ and $\tau_{3}\in\Omega^{3}_{27}(M)$ such that (see \cite{G2 structure Bryant})
\begin{align}
    d\varphi&=\tau_{0}\psi+3\!\ \tau_{1}\wedge\varphi+\ast_{\varphi}\tau_{3},\\
    d\psi&=4\!\ \widetilde{\tau}_{1}\wedge\psi+\tau_{2}\wedge\varphi.
\end{align}
In fact, Karigiannis\cite{Spiros flow of G2} proved that $\tau_{1}=\widetilde{\tau}_{1}$. We call $\tau_{0}$ the \textit{scalar torsion}, $\tau_{1}$ the \textit{vector torsion}, $\tau_{2}$ the \textit{Lie algebra torsion}, and $\tau_{3}$ the \textit{symmetric traceless torsion}. We also call $\tau_{\varphi}:=\{\tau_{0},\tau_{1},\tau_{2},\tau_{3}\}$ the intrinsic torsion forms of the $G_{2}$-structure $\varphi$.

If $\varphi$ is closed, which means $d\varphi=0$, then $\tau_{0},\tau_{1},\tau_{3}$ are all zero, so the only nonzero torsion form is 
$$\tau\equiv\tau_{2}=\frac{1}{2}(\tau_{2})_{ij}dx^{i}\otimes dx^{j}=\frac{1}{2}\tau_{ij}dx^{i}\otimes dx^{j}.$$
 Then from \cite{Spiros flow of G2,Spiros introduce to G2}, the full torsion tensor $\mathbf{T}=\mathbf{T}_{ij}dx^{i}\otimes dx^{j}$ satisfies the followings
$$
\mathbf{T}_{ij}=-\mathbf{T}_{ji}=-\frac{1}{2}(\tau_{2})_{ij}\ \ \text{or equivalently}\ \ \mathbf{T}=-\frac{1}{2}\tau,
$$
so that $\mathbf{T}$ is a skew-symmetric 2-tensor or a $2$-form.

\subsection{ The Laplacian $G_{2}$ flow and some notations }
In this subsection, we introduce the Laplacian $G_{2}$ flow, some notations, and definitions which will be used in the sequel. We use the notations in Hamilton’s paper \cite{RF}, $\nabla_{g}$ is the Levi-Civita connection induced by $g$, $\text{\rm Ric}(g)$, $R_{g}$, $dV_{g}$ are Ricci curvature, scalar curvature, and volume form, respectively. The Laplacian of the smooth time-dependent function $f(t)$ with respect to a family of Riemannian metrics $g(t)$ is
$$
\Delta_{g(t)}f(t)=g^{ij}(t)\left[\partial_{i}\partial_{j}f(t)-\Gamma_{ij}^{k}(t)\partial_{k}f(t)\right],
$$
where $\Gamma_{ij}^{k}(t)$ is the Christoffel symbol of $g(t)$ and $\displaystyle{\partial_{i}=\frac{\partial}{\partial x^{i}}}$.

In \cite{G2 structure Bryant}, Bryant introduced the following Laplacian $G_{2}$ flow  on a smooth $7$-manifold $M$ admitting closed $G_{2}$-structures
\begin{equation}
  \left \{
       \begin{array}{rl}
          \partial_{ t}\varphi(t)&=\triangle_{\varphi(t)}\varphi(t),\\
           \varphi(0)&=\varphi,
       \end{array}
  \right.\notag
\end{equation}
under the Laplacian $G_{2}$ flow. From \cite{G2 flow}, we see that the associated metric tensor $g(t)$ evolves by 
\begin{align}
\partial_{t}g(t)=-2\!\ {\rm Sic}(g(t)),
\end{align}
where $${\rm Sic}(g(t))={\rm Ric}(g(t))+\frac{1}{3}|{\bf T}(t)|_{g(t)}^{2}g(t)+2\widehat{\bf T}(t)$$
is the symmetric $2$-tensor and its components are given by
\begin{equation}
S_{ij}=R_{ij}+\frac{1}{3}|{\bf T}(t)|^{2}_{g(t)}g_{ij}+2\widehat{{\bf T}}_{ij},
\end{equation}
and $\widehat{\mathbf{T}}_{ij}=\mathbf{T}_{i}^{\ k}\mathbf{T}_{kj}$. In \cite{local curvature of G2,G2 flow}, we see that $R_{g(t)}=-|{\bf T}(t)|^{2}_{g(t)}$ and ${\bf T}_{ij}$ is skew-symmetric, then we have that
\begin{align}
    {\rm tr}_{g(t)}\Big({\rm Sic}(g(t))\Big)&=R_{g(t)}+\frac{7}{3}|{\bf T}(t)|_{g(t)}^{2}-2|{\bf T}(t)|_{g(t)}^{2}\notag\\
    &=-|{\bf T}(t)|_{g(t)}^{2}+\frac{7}{3}|{\bf T}(t)|_{g(t)}^{2}-2|{\bf T}(t)|_{g(t)}^{2}=\frac{2}{3}R_{g(t)}\notag.
\end{align}
Under the Laplacian $G_{2}$ flow $\eqref{G2 flow}$, for any smooth functions $u(t),v(t)$ with
$$
\int_{M}u(T)v(T)dV_{g(T)}=\int_{M}u(0)v(0)dV_{g(0)},
$$ we have that 
\begin{align}
    &\quad\int_{0}^{T}\int_{M}v(t)\left(\partial_{t}-\Delta_{g(t)}\right)u(t)\!\ dV_{g(t)}dt\notag\\
    &=\int_{0}^{T}\int_{M}\left[-u(t)\partial_{t}v(t)+\frac{2}{3}v(t)u(t)R_{g(t)}-u(t)\Delta_{g(t)} v(t)\right]dV_{g(t)}dt\notag\\
    &=\int_{0}^{T}\int_{M}u(t)\left(\frac{2}{3}R_{g(t)}-\partial_{t}-\Delta_{g(t)}\right)v(t)\!\ dV_{g(t)}dt\notag.
\end{align}
Let $\tau(t)=T-t$ be the backward time. For any time-dependent smooth function $f(t)$ on $M$, we denote 
$${\bf K}(t)=(4\pi\tau(t))^{-\frac{7}{2}}e^{-f(t)}$$
to be the positive solution of the conjugate heat equation
\begin{equation}
\partial_{t}{\bf K}(t)=-\Delta_{g(t)} {\bf K}(t)+\frac{2}{3}R_{g(t)}{\bf K}(t).
\end{equation}
From the definition of ${\bf K}(t)$, we can calculate the smooth function $f(t)$ satisfies the following equation
\begin{equation}
\partial_{t}f(t)=-\Delta_{g(t)} f(t){{-}}\frac{2}{3}R_{g(t)}+|\nabla_{g(t)} f(t)|_{g(t)}^{2}+\frac{7}{2\tau(t)} .
\end{equation}
We can define the weighted measure 
\begin{equation}
d\mu_{g(t)}:= {\bf K}(t) dV_{g(t)}=(4\pi\tau(t))^{-\frac{7}{2}}e^{-f(t)}dV_{g(t)},\ \ \int_{M} d\mu_{g(t)}=1.
\end{equation}
And the volume form $dV_{g(t)}$ satisfies
$$\partial_{t}(dV_{g(t)})=-\frac{2}{3}R_{g(t)}dV_{g(t)},$$
thus, the conjugate heat kernel measure $d\mu_{g(t)}$ is evolved by
\begin{equation}
\partial_{t}(d\mu_{g(t)})=-(\Delta_{g(t)} {\bf K}(t))dV_{g(t)}=-\frac{\Delta_{g(t)} {\bf K}(t)}{{\bf K}(t)}d\mu_{g(t)}.
\label{volume}
\end{equation}

On the weighted Riemannian manifold $(M^{n},g(t),d\mu_{g(t)})$, the weighted Bochner formula for any smooth function  $u$ is as follow 
\begin{align}
\label{Bochner}
\Delta_{g(t),f(t)}\left(|\nabla_{g(t)} u|_{g(t)}^{2}\right)=&2\left|\nabla_{g(t)}^{2} u\right|_{g(t)}^{2}+2\left\langle\nabla_{g(t)} u,\nabla_{g(t)}\triangle_{g(t),f(t)}u\right\rangle_{g(t)}\\
&+2\text{\rm Ric}_{f(t)}\left(\nabla_{g(t)} u,\nabla_{g(t)} u\right)\notag,
\end{align}
where
\begin{align}
\text{\rm Ric}_{f(t)}:=\text{\rm Ric}(g(t))+\nabla_{g(t)}^{2} f(t)
\label{Baker}
\end{align}
is the Bakry-\'{E}mery Ricci tensor introduced in \cite{B-E Ricci}, and
\begin{equation}
\Delta_{g(t),f(t)}u:=e^{f(t)}\text{div}_{g(t)}\left(e^{-f(t)}\nabla_{g(t)}{u}\right)=\Delta_{g(t)} u-\left\langle\nabla_{g(t)} f(t),\nabla_{g(t)} u\right\rangle_{g(t)}
\end{equation}
is the drift Laplacian operator for any smooth function $u$.

\section{Gradient estimates under Laplacian $G_{2}$ flow }\label{section 3}

In this section, we consider the Li-Yau type gradient estimate and Hamilton type gradient estimate of the heat equation$\eqref{heat equation}$ under the Laplacian $G_{2}$ flow $\eqref{G2 flow}$. Since $R_{g(t)}=-|{\mathbf{T}}(t)|^{2}_{g(t)}$, which implies the scalar curvature is non-positive here, some methods of gradient estimate require the non-negative Ricci curvature condition, which can't hold in this circumstance. Inspired by Liu in \cite{Gradient estimate Liu}, we weaken the curvature constraints and obtain the gradient estimate for the solution of the heat equation when the metric is evolved by the Laplacian $G_{2}$ flow $(\ref{G2 flow})$.

\begin{theorem}\label{theorem 3.1}
    Let $(M,\varphi(t))_{t\in(0,T]}$ be the solution of the Laplacian $G_{2}$ flow $\eqref{G2 flow}$ on a closed $7$-dimensional manifold $M$ with with $T<+\infty$ and $-Kg(t)\leq {\rm Ric}(g(t))\leq0$, where $g(t)$ is the Riemannian metric associated with $\varphi(t)$ and $K$ is a positive constant. If $u(t)$ is a positive solution of the heat equation, then on $M\times(0, T]$ the following estimate
    \begin{equation}\label{3.1}
        \frac{|\nabla_{g(t)} u(t)|^{2}_{g(t)}}{u(t)^{2}}-\alpha\frac{\partial_{t}u(t)}{u(t)}\leq \frac{7\alpha}{2at}+\left(\frac{49\alpha}{3a}+\frac{105\alpha^{2}-98\alpha}{2a(\alpha-1)}+\frac{7\sqrt{29}\alpha}{2\sqrt{ab}}\right)K
    \end{equation}
holds for any $\alpha>1$ and $a,b>0$ with $\displaystyle{a+2b=\frac{1}{\alpha}}$.
\end{theorem}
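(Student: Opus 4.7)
The plan is to implement a parabolic maximum principle argument in the style of Li--Yau, suitably adapted to the metric evolution $\partial_{t}g=-2\operatorname{Sic}(g(t))$ of the Laplacian $G_{2}$ flow. I would first substitute $f:=\log u$, so that the heat equation becomes $\partial_{t}f=\Delta_{g(t)}f+|\nabla_{g(t)}f|_{g(t)}^{2}$ and the target quantity takes the form
\begin{equation*}
F \;:=\; |\nabla f|_{g(t)}^{2}-\alpha\,\partial_{t}f \;=\; -\alpha\Delta f-(\alpha-1)|\nabla f|^{2}.
\end{equation*}
It then suffices to produce an a priori upper bound on $tF$ on $M\times(0,T]$, which I would obtain via the parabolic maximum principle applied to $tF$ on the closed manifold $M$.

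Next I would compute $(\partial_{t}-\Delta_{g(t)})F$. Differentiating $|\nabla f|^{2}$ in $t$ under the flow contributes an extra $2\operatorname{Sic}(\nabla f,\nabla f)$, while the Bochner formula expands $\Delta|\nabla f|^{2}$ into $2|\nabla^{2}f|^{2}+2\langle\nabla f,\nabla\Delta f\rangle+2\operatorname{Ric}(\nabla f,\nabla f)$. Combining this with the analogous computation for $\partial_{t}f$ (which requires the commutator $[\partial_{t},\Delta]f$ coming from the time derivative of the Christoffel symbols under the flow) and using the decomposition $\operatorname{Sic}=\operatorname{Ric}+\tfrac{1}{3}|\mathbf{T}|^{2}g+2\widehat{\mathbf{T}}$, the surviving contributions to $(\partial_{t}-\Delta)F$ are a negative quadratic Hessian term in $|\nabla^{2}f|^{2}$, a drift of the form $2\langle\nabla f,\nabla F\rangle$, and the curvature--torsion pieces $\tfrac{2(\alpha-1)}{3}|\mathbf{T}|^{2}|\nabla f|^{2}$, $4(\alpha-1)\widehat{\mathbf{T}}(\nabla f,\nabla f)$, and $-2\alpha\operatorname{Ric}(\nabla f,\nabla f)$, up to first-order tails from the commutator that can be absorbed into the drift.

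The third step is the pointwise analysis at an interior maximum $(x_{0},t_{0})$ of $tF$; I may assume $tF(x_{0},t_{0})>0$, otherwise the estimate is immediate. There $\nabla F=0$, $\Delta F\leq 0$, and $(\partial_{t}-\Delta)F\geq -F/t_{0}$. Using the refined Cauchy--Schwarz inequality $|\nabla^{2}f|^{2}\geq\frac{(\Delta f)^{2}}{7}$ and the substitution $\Delta f=-\frac{1}{\alpha}[F+(\alpha-1)|\nabla f|^{2}]$, I would convert the Hessian square into a quadratic form in $F$ and $X:=|\nabla f|^{2}$. The hypothesis $-Kg\leq\operatorname{Ric}\leq 0$ combined with $R_{g(t)}=-|\mathbf{T}|^{2}$ gives $|\mathbf{T}|^{2}\leq 7K$, and since $\widehat{\mathbf{T}}$ is negative semidefinite with $\operatorname{tr}\widehat{\mathbf{T}}=-|\mathbf{T}|^{2}$ one has $|\widehat{\mathbf{T}}(\nabla f,\nabla f)|\leq 7KX$. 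The resulting inequality at the maximum is then a polynomial inequality in $F$, $X$, $K$, and $t_{0}^{-1}$.

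The final step is the balancing. Using the partition $\frac{1}{\alpha}=a+2b$, I would split the leading quadratic $\frac{1}{7\alpha^{2}}[F+(\alpha-1)X]^{2}$ into an $F^{2}$-part weighted by $a$ and two cross-term parts weighted by $b$, the latter used to absorb the $\widehat{\mathbf{T}}$-generated contributions via Young's inequality. Solving the resulting quadratic in $F$ yields $F\leq\frac{7\alpha}{2at_{0}}+C_{a,b,\alpha}K$ with $C_{a,b,\alpha}$ matching the coefficient in $(\ref{3.1})$, and since the bound is independent of the location of the maximum, it propagates to all of $M\times(0,T]$. I expect the main technical obstacle to be the absorption of $\widehat{\mathbf{T}}$: it admits no Ricci-only control, so it must be dissipated through the independent parameter $b$, and the somewhat unusual constants $\sqrt{29}$ and $\frac{105\alpha^{2}-98\alpha}{2a(\alpha-1)}$ trace back to careful bookkeeping through the $\alpha/(\alpha-1)$ factor in the cross terms and the two separate Young's splits needed to absorb $\widehat{\mathbf{T}}(\nabla f,\nabla f)$.
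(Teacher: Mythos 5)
Your proposal is essentially the same Li--Yau maximum-principle argument the paper uses: logarithmic substitution $f=\log u$, maximum principle applied to $tF$, Bochner formula combined with the torsion decomposition $\operatorname{Sic}=\operatorname{Ric}+\tfrac{1}{3}|\mathbf{T}|^{2}g+2\widehat{\mathbf{T}}$ and the identity $R=-|\mathbf{T}|^{2}$ to control $|\mathbf{T}|^{2}\leq 7K$ and $|\widehat{\mathbf{T}}|^{2}\leq 49K^{2}$, the $a+2b=1/\alpha$ split with two separate completed squares to absorb $\operatorname{Ric}\cdot\nabla^{2}f$ and $\widehat{\mathbf{T}}\cdot\nabla^{2}f$, and a final quadratic-formula estimate at the interior maximum. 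A few of the intermediate coefficients you list for $(\partial_{t}-\Delta)F$ (notably the $|\mathbf{T}|^{2}$ piece, which in the paper collapses to $-\tfrac{2}{3}|\mathbf{T}|^{2}F$ once the $\tfrac{2}{3}\alpha|\mathbf{T}|^{2}\Delta f$ contribution is included, and the sign on the $\widehat{\mathbf{T}}(\nabla f,\nabla f)$ term) are not quite right, but these are bookkeeping slips in the sketch rather than deviations of method.
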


\begin{proof}
    We first set $f=\ln u$ and
    \begin{equation}
        F=t\left(|\nabla f|^{2}-\alpha \partial_{t}f\right)\notag.
    \end{equation}
    Observe that, $\eqref{3.1}$ is true when $F< 0$, hence we always assume that $F\geq 0$ in the sequel.
Some computations show that
    \begin{align}
       \Delta\left(|\nabla f|^{2}\right)&=\sum_{1\leq i,j\leq 7}\left(2f_{ij}^{2}+2f_{i}f_{jji}+2R_{ij}f_{i}f_{j}\right)\notag,\\
        \partial_{t}(\Delta f)&=\partial_{t}\left(g^{ij}\nabla_{i}\nabla_{j}f\right)=\sum_{1\leq i,j\leq 7}\left(2R_{ij}f_{ij}+4\widehat{\bf T}_{ij}f_{ij}\right)+\frac{2}{3}|\mathbf{T}|^{2}\Delta f+\Delta(\partial_{t}f)\notag.
    \end{align}
Combining these two equations we have that
    \begin{align}
        \Delta F&=t\Big(\Delta\left(|\nabla f|^{2}\right)-\alpha\Delta(\partial_{t}f)\Big)\notag\\     &=t\Bigg(2\sum_{1\leq i,j\leq 7}f_{ij}^{2}+2\sum_{1\leq i,j\leq 7}f_{i}f_{jji}+2\sum_{1\leq i,j\leq 7}R_{ij}f_{i}f_{j}-\alpha\partial_{t}(\Delta f)\notag\\
        &\quad+2\alpha\sum_{1\leq i,j\leq 7} R_{ij}f_{ij}+\frac{2}{3}\alpha|\mathbf{T}|^{2}\Delta f+4\alpha\sum_{1\leq i,j\leq 7}\widehat{\mathbf{T}}_{ij}f_{ij}\Bigg)\notag.
    \end{align}
Since $\Delta f=\partial_{t}f-|\nabla f|^{2}$, it follows that
    \begin{equation}
        \alpha\partial_{t}(\Delta f)=\alpha f_{tt}-\alpha\Bigg(\sum_{1\leq i,j\leq 7}\left(2R_{ij}f_{i}f_{j}+4\widehat{\mathbf{T}}_{ij}f_{i}f_{j}\right)+\frac{2}{3}|\mathbf{T}|^{2}|\nabla f|^{2}+2\nabla f\cdot\nabla (\partial_{t}f)\Bigg)\notag,
    \end{equation}
and
    \begin{align}
        \Delta F&=t\Bigg(2\sum_{1\leq i,j\leq 7}f_{ij}^{2}+2\nabla f\cdot\nabla\Delta f+2\sum_{1\leq i,j\leq 7}R_{ij}f_{i}f_{j}-\alpha f_{tt}+2\alpha \sum_{1\leq i,j\leq 7}R_{ij}f_{i}f_{j}\notag\\
        &\quad+\frac{2}{3}\alpha|{\mathbf{T}}|^{2}|\nabla f|^{2}+4\alpha\sum_{1\leq i,j\leq 7}\widehat{\mathbf{T}}_{ij}f_{i}f_{j}+2\alpha\nabla f\cdot\nabla (\partial_{t}f)\notag\\ 
        &\quad+2\alpha\sum_{1\leq i,j\leq 7} R_{ij}f_{ij}+\frac{2}{3}\alpha|\mathbf{T}|^{2}\Delta f+4\alpha\sum_{1\leq i,j\leq 7}\widehat{\mathbf{T}}_{ij}f_{ij}\Bigg)\notag.
    \end{align}
On the other hand, we have
    \begin{eqnarray*}
        \partial_{t}F&=&|\nabla f|^{2}-\alpha \partial_{t}f+t\Bigg(2\sum_{1\leq i,j\leq 7}R_{ij}f_{i}f_{j}+\frac{2}{3}|\mathbf{T}|^{2}|\nabla f|^{2}\\
        &&+4\sum_{1\leq i,j\leq 7}\widehat{\mathbf{T}}_{ij}f_{i}f_{j}+2\nabla f\cdot\nabla (\partial_{t}f)\Bigg)-t\alpha f_{tt}.\notag
    \end{eqnarray*}
Now we obtain
    \begin{align}\label{tri-par F}
        \left(\Delta-\partial_{t}\right)F&=t\Big(2\nabla f\cdot\nabla\Delta f+2\alpha\nabla f\cdot\nabla (\partial_{t}f)-2\nabla f\cdot\nabla (\partial_{t}f)\Big)\\
        &\quad+t\Bigg(2\sum_{1\leq i,j\leq 7}f_{ij}^{2}+2\alpha \sum_{1\leq i,j\leq 7}R_{ij}f_{ij}+4\alpha\sum_{1\leq i,j\leq 7}\widehat{\mathbf{T}}_{ij}f_{ij}\Bigg)\notag\\
        &\quad+t\bigg(\frac{2}{3}\alpha|\mathbf{T}|^{2}|\nabla f|^{2}+\frac{2}{3}\alpha|\mathbf{T}|^{2}\Delta f-\frac{2}{3}|{\mathbf{T}}|^{2}|\nabla f|^{2}\bigg)\notag\\
        &\quad+t\Bigg(2\alpha \sum_{1\leq i,j\leq 7}R_{ij}f_{i}f_{j}+4\alpha\sum_{1\leq i,j\leq 7}\widehat{\mathbf{T}}_{ij}f_{i}f_{j}-4\sum_{1\leq i,j\leq 7}\widehat{\mathbf{T}}_{ij}f_{i}f_{j}\Bigg)\notag\\
        &\quad-\Big(|\nabla f|^{2}-\alpha \partial_{t}f\Big).\notag
    \end{align}
Again using $\Delta f=\partial_{t}f-|\nabla f|^{2}$, the first and third line in the right side of $(\ref{tri-par F})$ become
    \begin{align}
        &\quad t\Big(2\nabla f\cdot\nabla\Delta f+2\alpha\nabla f\cdot\nabla (\partial_{t}f)-2\nabla f\cdot\nabla (\partial_{t}f)\Big)\notag\\
        &=2t\nabla f\cdot\nabla\Big(\partial_{t}f-|\nabla f|^{2}+\alpha \partial_{t}f-\partial_{t}f\Big)=-2\nabla f\cdot\nabla F\notag,\\
        &\quad t\Big(\frac{2}{3}\alpha|\mathbf{T}|^{2}|\nabla f|^{2}+\frac{2}{3}\alpha|\mathbf{T}|^{2}\Delta f-\frac{2}{3}|\mathbf{T}|^{2}|\nabla f|^{2}\Big)\notag\\
        &=\frac{2}{3}t|\mathbf{T}|^{2}\Big(\alpha|\nabla f|^{2}+\alpha\Delta f-|\nabla f|^{2}\Big)=-\frac{2}{3}|\mathbf{T}|^{2}F.\notag
    \end{align}
For the second line, using the trick in \cite{heat equation under RF by Cao xiaodong, heat equation under RHF}, we get
    \begin{align}
        &\quad \sum_{1\leq i,j\leq 7}\left(f_{ij}^{2}+\alpha R_{ij}f_{ij}+2\alpha\widehat{\mathbf{T}}_{ij}f_{ij}\right)\notag\\
        &=\sum_{1\leq i,j\leq 7}\left((a\alpha+2b\alpha)f_{ij}^{2}+\alpha R_{ij}f_{ij}+2\alpha\widehat{\mathbf{T}}_{ij}f_{ij}\right)\notag\\
        &=\sum_{1\leq i,j\leq 7}\left(a\alpha f_{ij}^{2}+\alpha\left|\sqrt{b}f_{ij}+\frac{R_{ij}}{2\sqrt{b}}\right|^{2}-\frac{\alpha}{4b}|{\rm Ric}|^{2}+\alpha\left|\sqrt{b}f_{ij}+\frac{\widehat{\mathbf{T}}_{ij}}{\sqrt{b}}\right|^{2}-\frac{\alpha}{b}|\widehat{\mathbf{T}}|^{2}\right)\notag\\
        &\geq a\alpha\sum_{1\leq i,j\leq 7} f_{ij}^{2}-\frac{\alpha}{4b}|{\rm Ric}|^{2}-\frac{\alpha}{b}|\widehat{\mathbf{T}}|^{2}\notag,
    \end{align}
where $a, b$ are constants satisfying $\displaystyle{a+2b=\frac{1}{\alpha}}$. Noting that 
    $$
    \sum_{1\leq i,j\leq 7}f_{ij}^{2}\geq\frac{1}{7}\Bigg(\sum_{1\leq i\leq 7} f_{ii}\Bigg)^{2}=\frac{(\Delta f)^{2}}{7},\quad |{\rm Ric}|^{2}\leq 7K^{2},\quad|\widehat{\mathbf{T}}|^{2}\leq 49
K^{2},$$
so it becomes 
    \begin{equation}
        \sum_{1\leq i,j\leq 7}\left(f_{ij}^{2}+\alpha R_{ij}f_{ij}+2\alpha\widehat{\mathbf{T}}_{ij}f_{ij}\right)\geq\frac{a\alpha(\Delta f)^{2}}{7}-\left(\frac{7\alpha}{4b}+\frac{49\alpha }{b}\right)K^{2}\notag.
    \end{equation}
For the fourth line, since $\mathbf{T}_{ij}$ is skew-symmetric, we obtain
    \begin{eqnarray*}  
    \sum_{1\leq i,j\leq 7}\widehat{\mathbf{T}}_{ij}f_{i}f_{j}&=&\sum_{1\leq i,j,k\leq 7}\mathbf{T}_{i}^{\ k}\mathbf{T}_{kj}f_{i}f_{j}\\
    &=&-\sum_{1\leq i,j,k\leq 7}(\mathbf{T}_{ik}f_{i})(\mathbf{T}_{jk}f_{j}) \ \ = \ \ -\ \Bigg|\sum_{1\leq i\leq 7}\mathbf{T}_{ik}f_{i}\Bigg|^{2}\notag.
    \end{eqnarray*}
Together with the Cauchy inequality, we have
    \begin{align}
        \sum_{1\leq i,j\leq 7}\widehat{\mathbf{T}}_{ij}f_{i}f_{j}&=-\ \Bigg|\sum_{1\leq i\leq 7}\mathbf{T}_{ik}f_{i}\Bigg|^{2}\notag\\
        &=-\sum_{1\leq k\leq 7}\Bigg(\sum_{1\leq i\leq 7}\mathbf{T}_{ik}f_{i}\Bigg)^{2}\notag\\
        &\geq-\sum_{1\leq k\leq 7}\Bigg(\sum_{1\leq i\leq 7}\mathbf{T}_{ik}^{2}|\nabla f|^{2}\Bigg)\notag\\
        &=-|\mathbf{T}|^{2}|\nabla f|^{2}=R|\nabla f|^{2}\geq -7K|\nabla f|^{2}\notag.
    \end{align}
Now, the fourth line becomes
    \begin{align}
        t\sum_{1\leq i,j\leq 7}&\left(2\alpha R_{ij}f_{i}f_{j}+4\alpha\widehat{\mathbf{T}}_{ij}f_{i}f_{j}-4\widehat{\mathbf{T}}_{ij}f_{i}f_{j}\right)\notag\\
        &\geq-2\alpha tK|\nabla f|^{2}-28t(\alpha-1)K|\nabla f|^{2}=-\Big(2\alpha+28(\alpha-1)\Big)tK|\nabla f|^{2}\notag.
    \end{align}
Substituting all these terms into $(\ref{tri-par F})$, we now obtain
    \begin{align}
        \label{tri-par F 2}\left(\Delta-\partial_{t}\right)F&\geq-2\nabla f\cdot\nabla F+\frac{2a\alpha t}{7}\Big(|\nabla f|^{2}-\partial_{t}f\Big)^{2}-\Big(2\alpha+28(\alpha-1)\Big)tK|\nabla f|^{2}\notag\\
        &\quad-\frac{2}{3}|{\bf T}|^{2}F-\Big(|\nabla f|^{2}-\alpha \partial_{t}f\Big)-\left(\frac{7\alpha}{2b}+\frac{98\alpha }{b}\right)tK^{2}.
    \end{align}
Following the trick in \cite{Gradient estimate Liu, heat equation under RF by Cao xiaodong}, set $y=|\nabla f|^{2}$, $z=\partial_{t}f$. Observe that 
    \begin{equation}
        (y-z)^{2}=\frac{1}{\alpha^{2}}(y-\alpha z)^{2}+\left(\frac{\alpha-1}{\alpha}\right)^{2}y^{2}+\frac{2(\alpha-1)}{\alpha^{2}}y(y-\alpha z)\notag,
    \end{equation}
    and $\displaystyle{mx^{2}-nx\geq-\frac{n^{2}}{4m}}$ for any $m,n>0$. Now we have 
    \begin{align}
        &\quad\frac{2a\alpha t}{7}\left(\big(|\nabla f|^{2}-f_{t}\big)^{2}-\frac{7\alpha+98(\alpha-1)}{a\alpha}K|\nabla f|^{2}\right)\notag\\
        &=\frac{2a\alpha t}{7}\left((y-z)^{2}-\frac{7\alpha+98(\alpha-1)}{a\alpha}Ky\right)\notag\\
        &=\frac{2a\alpha t}{7}\Big[\frac{1}{\alpha^{2}}(y-\alpha z)^{2}+\left(\frac{\alpha-1}{\alpha}\right)^{2}y^{2}+\frac{2(\alpha-1)}{\alpha^{2}}y(y-\alpha z)-\frac{7\alpha+98(\alpha-1)}{a\alpha}Ky\Big]\notag\\
        &\geq\frac{2a\alpha t}{7}\left(\frac{1}{\alpha^{2}}(y-\alpha z)^{2}+\frac{2(\alpha-1)}{\alpha^{2}}y(y-\alpha z)-\frac{49K^{2}[\alpha+14(\alpha-1)]^{2}}{4(\alpha-1)^{2}a^{2}}\right)\notag\\
        &=\frac{2a}{7\alpha}\frac{F^{2}}{t}+\frac{2(\alpha-1)}{\alpha^{2}}|\nabla f|^{2}\frac{F}{t}\frac{2a\alpha t}{7}-\frac{7\alpha K^{2}[\alpha+14(\alpha-1)]^{2}}{2(\alpha-1)^{2}a}t\notag\\
        &\geq\frac{2a}{7\alpha}\frac{F^{2}}{t}-\frac{7\alpha K^{2}[\alpha+14(\alpha-1)]^{2}}{2(\alpha-1)^{2}a}t\notag.
    \end{align}
Taking this term into $(\ref{tri-par F 2})$, we finally arrive at
    \begin{align}
        \label{tri-par F 3}\left(\Delta-\partial_{t}\right)F&\geq-2\nabla f\cdot\nabla F+\frac{2a}{7\alpha}\frac{F^{2}}{t}-\frac{7\alpha[\alpha+14(\alpha-1)]^{2}}{2(\alpha-1)^{2}a}tK^{2}\\
        &\quad+\frac{2}{3}RF-\frac{F}{t}-\left(\frac{7\alpha}{2b}+\frac{98\alpha}{b}\right)tK^{2}\notag.
    \end{align}
We assume that $F(x,t)$ takes its maximum at $(x_{0},t_{0})$, which means $$\nabla F(x_{0},t_{0})=0, \ \ \partial_{t}F(x_{0},t_{0})\geq0, \ \ \triangle F(x_{0},t_{0})\leq0.$$
Thus, at $(x_{0},t_{0})$, we have
    \begin{equation}
        \frac{2a}{7\alpha}F^{2}-\left(1-\frac{2}{3}Rt\right)F-\frac{7\alpha [\alpha+14(\alpha-1)]^{2}}{2(\alpha-1)^{2}a}t^{2}K^{2}-\left(\frac{7\alpha}{2b}+\frac{98\alpha}{b}\right)t^{2}K^{2}\leq0.
    \end{equation}
According to the quadratic formula
    \begin{align}
        F&\leq\frac{7\alpha}{4a}\cdot\Bigg(\sqrt{\left(1-\frac{2}{3}Rt\right)^{2}+\frac{8a}{7\alpha}\bigg(\frac{7\alpha [\alpha+14(\alpha-1)]^{2}}{2(\alpha-1)^{2}a}+\left(\frac{7\alpha}{2b}+\frac{98\alpha}{b}\right)\bigg)t^{2}K^{2}}\notag\\
        &\quad+1-\frac{2}{3}Rt\Bigg)\notag,
    \end{align}
which implies
    \begin{equation}
        F\leq\frac{7\alpha}{4a}\left(2-\frac{4}{3}Rt+\frac{2[\alpha+14(\alpha-1)]}{\alpha-1}tK+\sqrt{8a\left(\frac{1}{2b}+\frac{14}{b}\right)}tK\right)\notag.
    \end{equation}
Since $F$ takes its maximum at $(x_{0},t_{0})$, for all $(x,t)\in M\times(0,T]$,
    \begin{equation}
        F(x,t)\leq F(x_{0},t_{0})\leq \frac{7\alpha}{2a}+\left(\frac{49\alpha}{3a}+\frac{7\alpha[\alpha+14(\alpha-1)]}{2a(\alpha-1)}+\frac{7\alpha}{4a}\sqrt{8a\left(\frac{1}{2b}+\frac{14}{b}\right)}\right)tK\notag.
    \end{equation}
According to the definition of $F(x,t)$, we obtain the desired result
    \begin{equation}
        \frac{|\nabla u|^{2}}{u^{2}}-\alpha\frac{\partial_{t}u}{u}\leq \frac{7\alpha}{2at}+\left(\frac{49\alpha}{3a}+\frac{105\alpha^{2}-98\alpha}{2a(\alpha-1)}+\frac{7\sqrt{29}\alpha}{2\sqrt{ab}}\right)K,
    \end{equation}
where $\alpha>1$, $\displaystyle{a+2b=\frac{1}{\alpha}}$.
\end{proof}

\begin{remark}
    For example, if we take $\displaystyle{a=2b=\frac{1}{2\alpha}}$, then the estimate becomes 
    \begin{equation}
        \frac{|\nabla u|^{2}}{u^{2}}-\alpha\frac{\partial_{t}u}{u}\leq\frac{7\alpha^{2}}{t}+\left[\left(\frac{98}{3}+7\sqrt{58}\right)\alpha^{2}+\frac{105\alpha^{3}-98\alpha^{2}}{\alpha-1}\right]K\notag,
    \end{equation}
  where $\alpha>1$.
\end{remark}

\begin{corollary}
    Let $(M,\varphi(t))_{t\in(0,T]}$ be the solution of the Laplacian $G_{2}$ flow $\eqref{G2 flow}$ on a closed $7$-dimensional manifold $M$ with $T<+\infty$ and $-Kg(t)\leq {\rm Ric}(g(t))\leq0$, where $g(t)$ is the Riemannian metric associated with $\varphi(t)$ and $K$ is a positive constant. If $u(t)$ is a positive solution of the heat equation, then on $M\times(0, T]$ such that $t_{1}<t_{2}$, we have 
    $$
    u(x,t_{1})\leq u(y,t_{2})\left(\frac{t_{2}}{t_{1}}\right)^{\frac{7}{2a}}\exp\left\{\int_{0}^{1}\left[\frac{\alpha|\gamma^{\prime}(s)|_{\sigma(s)}^{2}}{4(t_{2}-t_{1})}+(t_{2}-t_{1})C_{a,b,\alpha}K\right]ds\right\},
    $$
    where $\alpha>1$, 
    $$
    C_{a,b,\alpha}=\frac{49}{3a}+\frac{105\alpha-98}{2a(\alpha-1)}+\frac{7\sqrt{29}}{2\sqrt{ab}},
    $$
    $a,b>0$, $\displaystyle{a+2b=\frac{1}{\alpha}}$, $\gamma(s)$ is a geodesics curve connecting $x$ and $y$ with $\gamma(0)=y$ and $\gamma(1)=x$, and $|\gamma^{\prime}(s)|_{\sigma(s)}$ is the length of the vector $\gamma^{\prime}(s)$ at $\sigma(s)=(1-s)t_{2}+st_{1}$.
\end{corollary}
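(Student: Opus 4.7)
The plan is to derive the spacetime Harnack inequality by integrating the Li-Yau type gradient estimate of Theorem \ref{theorem 3.1} along a suitably chosen spacetime path from $(y,t_2)$ to $(x,t_1)$. First, I would rewrite the estimate of Theorem \ref{theorem 3.1} in terms of $\ln u$ as a pointwise lower bound
\begin{equation*}
\partial_{t}\ln u(t) \;\geq\; \frac{1}{\alpha}\bigl|\nabla_{g(t)}\ln u(t)\bigr|_{g(t)}^{2} \;-\; \frac{7}{2at} \;-\; C_{a,b,\alpha}K,
\end{equation*}
valid on $M\times(0,T]$, obtained simply by dividing the estimate by $\alpha$ and isolating $\partial_{t}\ln u$.

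Next I would introduce the spacetime curve $s\mapsto(\gamma(s),\sigma(s))$ with $\gamma(0)=y$, $\gamma(1)=x$ and $\sigma(s)=(1-s)t_{2}+st_{1}$, so that $\sigma'(s)=t_{1}-t_{2}<0$, and compute
\begin{equation*}
\frac{d}{ds}\ln u(\gamma(s),\sigma(s)) \;=\; \bigl\langle\nabla_{g(\sigma)}\ln u,\gamma'(s)\bigr\rangle_{g(\sigma)} \;+\; \sigma'(s)\,\partial_{t}\ln u.
\end{equation*}
Because $\sigma'(s)<0$, substituting the lower bound on $\partial_{t}\ln u$ yields an \emph{upper} bound on the derivative. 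The elementary inequality $\langle V,W\rangle-c|V|^{2}\leq |W|^{2}/(4c)$ for $c>0$ (i.e.\ completing the square with $V=\nabla\ln u$, $c=(t_{2}-t_{1})/\alpha$, $W=\gamma'(s)$) then gives
\begin{equation*}
\frac{d}{ds}\ln u(\gamma(s),\sigma(s)) \;\leq\; \frac{\alpha|\gamma'(s)|_{\sigma(s)}^{2}}{4(t_{2}-t_{1})} \;+\; \frac{7(t_{2}-t_{1})}{2a\,\sigma(s)} \;+\; (t_{2}-t_{1})\,C_{a,b,\alpha}\,K.
\end{equation*}

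Integrating this inequality from $s=0$ to $s=1$ and exponentiating produces the desired form. The only genuinely non-cosmetic step is the evaluation of the time integral: with the substitution $d\sigma=-(t_{2}-t_{1})\,ds$, one gets
\begin{equation*}
\int_{0}^{1}\frac{7(t_{2}-t_{1})}{2a\,\sigma(s)}\,ds \;=\; \frac{7}{2a}\int_{t_{1}}^{t_{2}}\frac{d\sigma}{\sigma} \;=\; \frac{7}{2a}\ln\frac{t_{2}}{t_{1}},
\end{equation*}
which exponentiates to the factor $(t_{2}/t_{1})^{7/(2a)}$ displayed in the statement, while the remaining terms combine into the integrand inside the exponential. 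The constant-in-$s$ contribution $(t_{2}-t_{1})C_{a,b,\alpha}K$ can equivalently be written as an integral over $[0,1]$, which matches the form in the statement exactly.

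There is no real obstacle here beyond bookkeeping; the only point that requires care is the direction of the inequality when multiplying by $\sigma'(s)<0$, which is precisely what flips the lower bound on $\partial_{t}\ln u$ into the correct upper bound for $\frac{d}{ds}\ln u(\gamma(s),\sigma(s))$. No additional curvature hypotheses beyond those already used in Theorem \ref{theorem 3.1} are needed, and the result follows with the constants $\alpha>1$, $a,b>0$, $a+2b=1/\alpha$ inherited verbatim from that theorem.
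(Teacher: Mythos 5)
Your proposal is correct and follows essentially the same route as the paper: integrate the Li-Yau estimate of Theorem \ref{theorem 3.1} along the spacetime path $s\mapsto(\gamma(s),\sigma(s))$, complete the square (Young's inequality) to trade the cross term $\langle\nabla\ln u,\gamma'\rangle$ for the quadratic term in $|\nabla\ln u|$, and evaluate the $1/\sigma$ time integral to produce the $(t_2/t_1)^{7/(2a)}$ factor. The only cosmetic difference is the order of substitution — you first isolate $\partial_t\ln u$ and then complete the square, whereas the paper applies Young's inequality to $\frac{\partial l}{\partial s}$ and then inserts the gradient estimate — but the two computations are algebraically identical and produce the same constants.
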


\begin{proof}
    We can write Li-Yau type gradient estimate in Theorem \ref{theorem 3.1} as follows:
    \begin{align}
        \frac{|\nabla u|^{2}}{u^{2}}-\alpha\frac{\partial_{t}u}{u}\leq\frac{C_{\alpha}}{t}+C_{\alpha,a,b}K\notag,
    \end{align}
    where
    $$
    C_{\alpha}=\frac{7\alpha}{2a}, \ \ \ C_{\alpha,a,b}=\frac{49\alpha}{3a}+\frac{105\alpha^{2}-98\alpha}{2a(\alpha-1)}+\frac{7\sqrt{29}\alpha}{2\sqrt{ab}}.
    $$

    Choosing a geodesics curve $\gamma(s)$ connects $x$ and $y$ with $\gamma(0)=y$ and $\gamma(1)=x$. We define $l(s)=\ln u(\gamma(s),(1-s)t_{2}+st_{1})$ and $\sigma(s)=(1-s)t_{2}+st_{1}$, then we have $l(0)=\ln u(y,t_{2})$ and $l(1)=\ln u(x,t_{1})$. By calculating, we have
    \begin{align}
        \label{l(s)}\frac{\partial l(s)}{\partial s}&=(t_{2}-t_{1})\left(\frac{\nabla u}{u}\frac{\gamma^{\prime}(s)}{(t_{2}-t_{1})}-\frac{u_{t}}{u}\right)\\
        &\leq(t_{2}-t_{1})\left(\frac{\alpha|\gamma^{\prime}(s)|_{\sigma(s)}^{2}}{4(t_{2}-t_{1})^{2}}+\frac{|\nabla u|^{2}}{\alpha u^{2}}-\frac{u_{t}}{u}\right)\notag\\
        &\leq \frac{\alpha|\gamma^{\prime}(s)|_{\sigma(s)}^{2}}{4(t_{2}-t_{1})}+\frac{t_{2}-t_{1}}{\alpha}\left(\frac{C_{\alpha}}{\sigma(s)}+C_{\alpha,a,b}K\right)\notag,
    \end{align}
    where $|\gamma^{\prime}(s)|_{\sigma(s)}$ is the length of the vector $\gamma^{\prime}(s)$ at $\sigma(s)$. Integrating $\eqref{l(s)}$ over $\gamma(s)$, we get
    \begin{align}
        \ln\frac{u(x,t_{1})}{u(y,t_{2})}&=\int_{0}^{1}\frac{\partial l(s)}{\partial s}ds\notag\\
        &\leq \int_{0}^{1} \left[\frac{\alpha|\gamma^{\prime}(s)|_{\sigma(s)}^{2}}{4(t_{2}-t_{1})}+\frac{t_{2}-t_{1}}{\alpha}\left(\frac{C_{\alpha}}{\sigma(s)}+C_{\alpha,a,b}K\right)\right]ds\notag\\
        &=\int_{0}^{1}\left[\frac{\alpha|\gamma^{\prime}(s)|_{\sigma(s)}^{2}}{4(t_{2}-t_{1})}+\frac{t_{2}-t_{1}}{\alpha}C_{\alpha,a,b}K\right]ds+\frac{7}{2a}\ln\frac{t_{2}}{t_{1}}\notag.
    \end{align}
Thus, we get the desired result.    
\end{proof}

\begin{theorem}\label{theorem 3.4}
    Let $(M,\varphi(t))_{t\in(0,T]}$ be the solution of the Laplacian $G_{2}$ flow $\eqref{G2 flow}$ on a closed $7$-dimensional manifold $M$ with $T<+\infty$ and $-Kg(t)\leq {\rm Ric}(g(t))\leq0$, where $g(t)$ is the Riemannian metric associated with $\varphi(t)$ and $K$ is a positive constant. If $u(t)$ is a positive solution of the heat equation, then on $M\times(0, T]$ the following estimate
    \begin{equation}
        |\nabla_{g(t)} u(t)|^{2}_{g(t)}\leq\frac{u(t)}{t}\left(u(t)\ln\frac{A}{u(t)}+\lambda A^{2}-\lambda\eta^{2}\right)
    \end{equation}
holds, where $\displaystyle{\eta=\min_{M}u(0)}$, $\displaystyle{A=\max_{M}u(0)}$ and $\lambda$ is a constant depending on $K$, $\eta$ and $T$.
\end{theorem}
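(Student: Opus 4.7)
The plan is to prove this Hamilton-type estimate by the parabolic maximum principle applied to a carefully chosen auxiliary function. First, by the ordinary maximum/minimum principle for the heat equation $(\partial_t - \Delta_{g(t)})u = 0$ on the closed manifold $M$, one has $\eta \leq u(x,t) \leq A$ throughout $M \times [0,T]$, independently of the time-dependence of $g(t)$. Setting $L := \ln(A/u)$, the bound $u \leq A$ gives $L \geq 0$, and a short computation from $u = Ae^{-L}$ together with $u_t = \Delta u$ yields the two identities that will be used constantly, namely $(\partial_t - \Delta_{g(t)})L = -|\nabla L|^2$ and $\nabla u = -u\nabla L$.

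I would then introduce the auxiliary function
\[
P \;=\; tu\,|\nabla L|^2 \;-\; uL \;-\; \lambda\,(A^2 - u^2),
\]
with a constant $\lambda > 0$ to be determined in terms of $K$, $\eta$, and $T$. Since $u \leq A$ gives $L \geq 0$ and $A^2 - u^2 \geq 0$, one has $P(\cdot,0) \leq 0$. The goal is to prove $P \leq 0$ on $M \times [0,T]$; the statement then follows by replacing $A^2 - u^2$ with $A^2 - \eta^2$ (using $u \geq \eta$) and rewriting $u|\nabla L|^2 = |\nabla u|^2/u$.

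The heart of the argument is computing $(\partial_t - \Delta_{g(t)})P$. Using $\partial_t g^{ij} = 2S^{ij}$ with $S_{ij} = R_{ij} + \tfrac{1}{3}|\mathbf{T}|^2 g_{ij} + 2\widehat{\mathbf{T}}_{ij}$, the Bochner formula, and the scalar identities $(\partial_t - \Delta_{g(t)})(uL) = u|\nabla L|^2$ and $(\partial_t - \Delta_{g(t)})(A^2 - u^2) = 2u^2|\nabla L|^2$, the transport terms $-2tu\langle \nabla L, \nabla|\nabla L|^2\rangle$ and $-2t\langle \nabla u, \nabla|\nabla L|^2\rangle$ cancel exactly thanks to $\nabla u = -u\nabla L$, leaving
\[
(\partial_t - \Delta_{g(t)})P \;=\; -2tu\,|\nabla^2 L|^2 + \tfrac{2tu}{3}|\mathbf{T}|^2 |\nabla L|^2 + 4tu\,\widehat{\mathbf{T}}^{ij}L_iL_j - 2\lambda u^2|\nabla L|^2.
\]
Applying the torsion estimates $|\mathbf{T}|^2 = -R_{g(t)} \leq 7K$ and $|\widehat{\mathbf{T}}^{ij}L_iL_j| \leq 7K|\nabla L|^2$, both of which are already established in the proof of Theorem \ref{theorem 3.1}, I would conclude
\[
(\partial_t - \Delta_{g(t)})P \;\leq\; -2tu\,|\nabla^2 L|^2 + \Big(\tfrac{98}{3}tuK - 2\lambda u^2\Big)|\nabla L|^2.
\]
Since $t \leq T$ and $u \geq \eta$, setting $\lambda := \tfrac{49TK}{3\eta}$ (or any larger constant) makes the coefficient of $|\nabla L|^2$ nonpositive, so $(\partial_t - \Delta_{g(t)})P \leq 0$ throughout $M \times [0,T]$; the maximum principle then forces $P \leq \max_M P(\cdot,0) \leq 0$, which is the desired inequality.

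The main obstacle is locating the right auxiliary function. The correction $-\lambda(A^2 - u^2)$ is what makes the argument work: its heat operator contributes the favourable $-2\lambda u^2|\nabla L|^2$ term, and the minimum-principle bound $u \geq \eta$ is what allows this to absorb the positive torsion contribution $\tfrac{98}{3}tuK|\nabla L|^2$ by a fixed $\lambda$ depending only on $K$, $\eta$, and $T$ — exactly the dependence claimed. Without the $u \geq \eta$ bound, the absorption would fail, which explains both why the $\lambda(A^2 - \eta^2)$ correction appears in the final estimate and why Hamilton's classical result in the fixed-metric, nonnegative-Ricci setting requires no such correction.
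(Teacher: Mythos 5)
Your proposal is correct and follows essentially the same route as the paper: the paper's auxiliary function is $P = t\,|\nabla u|^2/u - u\ln(A/u) + \lambda u^2$, which is exactly your $P$ plus the constant $\lambda A^2$, and the heat-operator computation and maximum-principle conclusion are identical. The only (harmless) deviation is that you bound $4tu\,\widehat{\mathbf{T}}^{ij}L_iL_j$ in absolute value, whereas the paper observes this term equals $-\tfrac{4t}{u}\sum_k\bigl|\sum_i\mathbf{T}_{ik}u_i\bigr|^2\leq 0$ and simply discards it, allowing the smaller constant $\lambda\geq\tfrac{7KT}{3\eta}$ in place of your $\tfrac{49KT}{3\eta}$.
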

\begin{proof}
    First, using the Bochner technique with the flow equation, we have that 
    \begin{equation}
        (\partial_{t}-\Delta)|\nabla u|^{2}=-2|\nabla^{2}u|^{2}+\frac{2}{3}|\mathbf{T}|^{2}|\nabla u|^{2}-4\sum_{1\leq k\leq 7}\Bigg|\sum_{1\leq i\leq 7}\mathbf{T}_{ik}\nabla_{i}u\Bigg|^{2}.
    \end{equation}
Let $\lambda$ be a constant to be fixed later. Setting
$$
P=t\frac{|\nabla u|^{2}}{u}-u\ln\frac{A}{u}+\lambda u^{2},
$$
some calculations show that 
    \begin{align}
        \partial_{t}\bigg(\frac{|\nabla u|^{2}}{u}\bigg)&=\frac{\partial_{t}|\nabla u|^{2}}{u}-\frac{|\nabla u|^{2}}{u^{2}}\partial_{t}u\notag,\\
       \Delta\bigg(\frac{|\nabla u|^{2}}{u}\bigg)&=\frac{\Delta|\nabla u|^{2}}{u}+\Delta\left(\frac{1}{u}\right)|\nabla u|^{2}+2\nabla\left(\frac{1}{u}\right)\cdot\nabla(|\nabla u|^{2})\notag\\
        &=\frac{\Delta|\nabla u|^{2}}{u}-\left(\frac{\Delta u}{u^{2}}-2\frac{|\nabla u|^{2}}{u^{3}}\right)|\nabla u|^{2}-\frac{4}{u^{2}}\sum_{1\leq i,j\leq 7}u_{ij}u_{i}u_{j}\notag,\\
        \partial_{t}\left(u\ln\frac{A}{u}\right)&=\Delta\left(u\ln\frac{A}{u}\right)+\frac{|\nabla u|^{2}}{u}\notag,\\
        \partial_{t}\big(u^{2}\big)&=\Delta \big(u^{2}\big)-2|\nabla u|^{2}.\notag
    \end{align}
Combining these equations we obtain
    \begin{align}
        \left(\partial_{t}-\Delta\right)P&=t\left(\partial_{t}\frac{|\nabla u|^{2}}{u}-\Delta\frac{|\nabla u|^{2}}{u}\right)-2\lambda|\nabla u|^{2}\notag\\
        &=t\Bigg(\frac{(\partial_{t}-\Delta)|\nabla u|^{2}}{u}-2\frac{|\nabla u|^{4}}{u^{3}}+\frac{4}{u^{2}}\sum_{1\leq i,j\leq 7}u_{ij}u_{i}u_{j}\Bigg)-2\lambda|\nabla u|^{2}\notag\\
        &=\frac{t}{u}\Bigg(-2|\nabla^{2}u|^{2}+\frac{2}{3}|\mathbf{T}|^{2}|\nabla u|^{2}-4\sum_{1\leq k\leq 7}\Bigg|\sum_{1\leq i\leq 7}\mathbf{T}_{ik}u_{i}\Bigg|^{2}\Bigg)\notag\\
        &\quad-\frac{2t}{u}\Bigg(\frac{|\nabla u|^{4}}{u^{2}}-\frac{2}{u}\sum_{1\leq i,j\leq 7}u_{ij}u_{i}u_{j}\Bigg)-2\lambda|\nabla u|^{2}\notag\\
        &=-\frac{2t}{u}\sum_{1\leq i,j\leq 7}\left|u_{ij}-\frac{u_{i}u_{j}}{u}\right|^{2}+\left(\frac{2t}{3u}|\mathbf{T}|^{2}-2\lambda\right)|\nabla u|^{2}\notag\\
        &\quad-\frac{4t}{u}\sum_{1\leq k\leq 7}\Bigg|\sum_{1\leq i\leq 7}\mathbf{T}_{ik}u_{i}\Bigg|^{2}\notag.
    \end{align}
Since $\eta\leq u(0)\leq A$ and $|\mathbf{T}|^{2}=-R$, taking $\displaystyle{\lambda\geq\frac{7KT}{3\eta}}$, we obtain $\left(\partial_{t}-\Delta\right)P\leq0$.

According to the maximum principle, we obtain
    \begin{equation}
       P(t)\leq\max_{M}P(0)=\lambda A^{2}\notag,
    \end{equation}
which means 
    \begin{equation}
        |\nabla u|^{2}\leq\frac{u}{t}\left(u\ln\frac{A}{u}+\lambda A^{2}-\lambda u^{2}\right),
    \end{equation}
where $\lambda$ is a constant depending on $K$, $\eta$ and $T$. Thus we get the desired result.
\end{proof}

\section{Parabolic frequency on Laplacian $G_{2}$ flow with bounded Ricci curvature }\label{section 4}

In this section, using the Li-Yau type gradient estimate and Hamilton type gradient estimate, we study the parabolic frequency for the solution of the heat equation $\eqref{heat equation}$
under the Laplacian $G_{2}$ flow $\eqref{G2 flow}$ with bounded Ricci curvature.

For a time-dependent function $u=u(t):M\times[t_{0},t_{1}]\rightarrow \mathbb{R}^{+}$ with $u(t),\partial_{t}u(t)\in W_{0}^{2,2}(d\mu_{g(t)})$ and for all $t\in[t_{0},t_{1}]\subset(0,T)$, we define 
\begin{align}
I(t)&=\int_{M} u^{2}(t) d \mu_{g(t)},\notag\\
D(t)&=h(t)\int_{M}|\nabla_{g(t)}u(t)|_{g(t)}^{2}d \mu_{g(t)}=-h(t)\int_{M}\langle u(t),\Delta_{g(t),f(t)}u(t)\rangle_{g(t)} d\mu_{g(t)},\notag\\
U(t)&=\exp\left\{-\int_{t_{0}}^{t}\left(\frac{h'(s)}{h(s)}-\frac{2}{3}R_{0}+\frac{28+C_{1}(A,\eta)}{s}+cK+\frac{7}{2}C(s)\right)ds\right\}\frac{D(t)}{I(t)}\notag
\end{align}
where $h(t)$ is a time-dependent function, $K$ and $c$ are both positive constants, 
$$
R_{0}=\min_{M\times[t_{0},t_{1}]}R(t), \ \ \ C_{1}(A,\eta)=\ln\frac{A}{\eta}+\lambda\frac{A^{2}}{\eta}, \ \ \ C(t)=\frac{C_{1}(A,\eta)}{t}, \ \ \ 
$$
and $\lambda$ is the constant in Theorem \ref{theorem 3.4},
$$
\eta=\min_{M}u(0), \ \ \ A=\max_{M}u(0). 
$$
Observe that, $A$ and $\eta$ are both positive constants.

\begin{lemma}\label{lemma 4.1}
    Under the Laplacian $G_{2}$ flow $\eqref{G2 flow}$, the norm of the gradient of any smooth function $u(t)$ satisfies the following equation
\begin{eqnarray}
    (\partial_{t}-\Delta)|\nabla u|^{2}&=&-2|\nabla^{2}u|^{2}+2\langle\nabla u,\nabla(\partial_{t}-\Delta)u\rangle+\frac{2}{3}|{\bf T}|^{2}|\nabla u|^{2}\\
    &&- \ 4\sum_{1\leq k\leq 7}\bigg|\sum_{1\leq i\leq 7}{\bf T}_{ik}\nabla_{i}u\bigg|^{2}.\notag
\end{eqnarray}
\end{lemma}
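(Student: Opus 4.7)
The plan is to compute $\partial_t|\nabla u|^2$ and $\Delta|\nabla u|^2$ separately, then subtract, using the evolution of the metric under the Laplacian $G_2$ flow and the standard Bochner identity. The cancellation of the Ricci contributions is immediate, and the remaining $\widehat{\mathbf{T}}$ terms reshape into the stated negative sum of squares by skew-symmetry.

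First I would write $|\nabla u|^2 = g^{ij}(t)\,\partial_i u\,\partial_j u$ and differentiate in $t$. Recalling that the Laplacian $G_2$ flow induces $\partial_t g_{ij} = -2 S_{ij}$ with $S_{ij} = R_{ij} + \tfrac{1}{3}|\mathbf{T}|^2 g_{ij} + 2\widehat{\mathbf{T}}_{ij}$, the inverse metric evolves as $\partial_t g^{ij} = 2 R^{ij} + \tfrac{2}{3}|\mathbf{T}|^2 g^{ij} + 4\widehat{\mathbf{T}}^{ij}$. Hence
\begin{equation}
\partial_t |\nabla u|^2 = 2 R^{ij}\nabla_i u\,\nabla_j u + \tfrac{2}{3}|\mathbf{T}|^2|\nabla u|^2 + 4\widehat{\mathbf{T}}^{ij}\nabla_i u\,\nabla_j u + 2\langle \nabla u,\nabla \partial_t u\rangle. \notag
\end{equation}

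Next I would invoke the classical Bochner formula $\Delta|\nabla u|^2 = 2|\nabla^2 u|^2 + 2\langle \nabla u, \nabla \Delta u\rangle + 2\,\mathrm{Ric}(\nabla u,\nabla u)$, valid pointwise for the metric $g(t)$. Subtracting, the two $R^{ij}\nabla_i u\,\nabla_j u$ terms cancel, and the gradient/Laplacian terms combine into $2\langle\nabla u,\nabla(\partial_t - \Delta)u\rangle$, giving
\begin{equation}
(\partial_t - \Delta)|\nabla u|^2 = -2|\nabla^2 u|^2 + 2\langle\nabla u,\nabla(\partial_t - \Delta)u\rangle + \tfrac{2}{3}|\mathbf{T}|^2|\nabla u|^2 + 4\widehat{\mathbf{T}}^{ij}\nabla_i u\,\nabla_j u. \notag
\end{equation}

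Finally I would rewrite the $\widehat{\mathbf{T}}$ term. Working in an orthonormal frame and using $\widehat{\mathbf{T}}_{ij} = \mathbf{T}_i{}^k\mathbf{T}_{kj}$ together with the skew-symmetry $\mathbf{T}_{kj} = -\mathbf{T}_{jk}$ (recalled in Section 2), the same contraction trick already used in the proof of Theorem \ref{theorem 3.1} gives
\begin{equation}
\widehat{\mathbf{T}}^{ij}\nabla_i u\,\nabla_j u = -\sum_{1\le k\le 7}\bigg|\sum_{1\le i\le 7}\mathbf{T}_{ik}\nabla_i u\bigg|^2, \notag
\end{equation}
and multiplying by $4$ yields the stated identity. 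The only mildly delicate point is the sign bookkeeping in this last step; everything else is routine, and no obstruction is expected beyond being careful with the skew-symmetric contraction.
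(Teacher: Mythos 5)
Your proposal is correct and follows essentially the same path as the paper's proof: differentiate $|\nabla u|^2 = g^{ij}\nabla_i u\nabla_j u$ in $t$ using $\partial_t g^{ij} = 2S^{ij}$, subtract the Bochner identity so the Ricci terms cancel, and reshape $\widehat{\mathbf{T}}^{ij}\nabla_i u\nabla_j u$ into the negative sum of squares via the skew-symmetry of $\mathbf{T}$. The sign bookkeeping you flagged indeed works out as you wrote it, so there is no gap.
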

\begin{proof}
    At first, note that 
\begin{equation}
    \partial_{t}|\nabla u|^{2}=2{\rm Ric}(\nabla u,\nabla u)+\frac{2}{3}|{\bf T}|^{2}|\nabla u|^{2}+4\sum_{1\leq i,j\leq 7}\widehat{{\bf T}}_{ij}\nabla_{i}u\nabla_{j}u+2\langle\nabla u,\nabla\partial_{t}u\rangle
\end{equation}
Since ${\bf T}_{ij}$ is anti-symmetric, we obtain
\begin{eqnarray}
    \sum_{1\leq i,j\leq 7}\widehat{{\bf T}}_{ij}\nabla_{i}u\nabla_{j}u&=&\sum_{1\leq i,j, m\leq 7}{\bf T}_{im}{\bf T}^{\ m}_{j}\nabla_{i}u\nabla_{j}u\nonumber\\
    &=&-\sum_{1\leq i,j, m\leq 7}({\bf T}_{im}\nabla_{i}u)({\bf T}_{m}^{\ j}\nabla_{j}u)\\
    &=&-\sum_{1\leq k\leq 7}\bigg|\sum_{1\leq i\leq 7}{\bf T}_{ik}\nabla_{i}u\bigg|^{2}.\nonumber
\end{eqnarray}
Together with the Bochner formula, we obtain the desired result.
\end{proof}

\begin{theorem}\label{theorem 4.2}
Let $(M,\varphi(t))_{t\in(0,T]}$ be the solution of the Laplacian $G_{2}$ flow $\eqref{G2 flow}$ on a closed $7$-dimensional manifold $M$ with $T<+\infty$ and $-Kg(t)\leq {\rm Ric}(g(t))\leq0$, where $g(t)$ is the Riemannian metric associated with $\varphi(t)$ and $K$ is a positive constant. If $u(t)$ is a positive solution of the heat equation $\eqref{heat equation}$ with $\eta\leq u(0)\leq A$, then the following holds.

\begin{itemize} 

\item[(i)] If $h(t)$ is a negative time-dependent function, then the parabolic frequency $U(t)$ is monotone increasing along the Laplacian $G_{2}$ flow.

\item[(ii)] If $h(t)$ is a positive time-dependent function, then the parabolic frequency $U(t)$ is monotone decreasing along the Laplacian $G_{2}$ flow.
\end{itemize}
\end{theorem}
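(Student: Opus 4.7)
The strategy is to treat both cases uniformly by showing that $(\ln|U|)'(t)\le 0$ on $[t_0,t_1]$. Since $U(t)=E(t)\,h(t)\,P(t)/Q(t)$ with $P(t):=\int_M|\nabla u|^2\,d\mu_{g(t)}$, $Q(t):=\int_M u^2\,d\mu_{g(t)}$, and $E(t)>0$ the prescribed exponential factor, the sign of $U$ equals the sign of $h$, so $(\ln|U|)'\le 0$ is equivalent to $U'\ge 0$ when $h<0$ and to $U'\le 0$ when $h>0$. Differentiating and simplifying,
$$
(\ln|U|)'(t)=\frac{2}{3}R_0-\frac{28+C_1(A,\eta)}{t}-cK-\frac{7}{2}C(t)+\frac{P'(t)}{P(t)}-\frac{Q'(t)}{Q(t)},
$$
so the entire problem reduces to bounding $P'/P-Q'/Q$ from above by the correction term in the exponent.

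First I compute $Q'(t)$. Using the conjugate heat equation for $\mathbf{K}(t)$ together with two integrations by parts on the closed manifold, the identity $\int_M\Delta v\,d\mu+\int_M v\,\partial_t d\mu=0$ forces the Laplacian from the heat equation $\partial_t u=\Delta u$ to cancel against the Laplacian piece of $\partial_t d\mu$, yielding $Q'(t)=-2P(t)$ and hence $-Q'/Q=2P/Q$. The Hamilton-type bound of Theorem~\ref{theorem 3.4} supplies the pointwise inequality $|\nabla u|^2/u^2\le C_1(A,\eta)/t=C(t)$, hence $P/Q\le C(t)$, producing the $\tfrac{7}{2}C(t)$ contribution (the factor arises from a subsequent Cauchy-Schwarz step when recombined with $P'/P$).

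Next I compute $P'(t)$ via Lemma~\ref{lemma 4.1}. Since $u$ solves the heat equation, $\partial_t u-\Delta u=0$; the $\int_M\Delta|\nabla u|^2\,d\mu$ term again cancels the volume derivative, leaving
$$
P'(t)=-2\int_M|\nabla^2 u|^2\,d\mu+\tfrac{2}{3}\int_M|\mathbf{T}|^2|\nabla u|^2\,d\mu-4\int_M\sum_{k}\Big|\sum_{i}\mathbf{T}_{ik}\nabla_i u\Big|^2\,d\mu.
$$
The scalar identity $|\mathbf{T}|^2=-R$ combined with $R\ge R_0$ bounds the middle term above by $-\tfrac{2}{3}R_0\,P$, producing the $\tfrac{2}{3}R_0$ in the exponent. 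The skew-symmetric torsion term is dominated pointwise by $|\mathbf{T}|^2|\nabla u|^2\le 7K|\nabla u|^2$ using $-Kg\le\mathrm{Ric}\le 0$, yielding the $cK$ contribution. The Hessian term $-2\int|\nabla^2 u|^2\,d\mu\le 0$ is either dropped or, after the trace inequality $|\nabla^2 u|^2\ge\tfrac{1}{7}(\Delta u)^2$ together with the Cauchy-Schwarz bound $(\int u\Delta u\,d\mu)^2\le Q\int(\Delta u)^2\,d\mu$, converted into a loss proportional to $P/Q$; controlling the residual $\partial_t u/u$ factor by the Li-Yau estimate of Theorem~\ref{theorem 3.1} then produces the $\tfrac{28+C_1(A,\eta)}{t}$ term.

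The principal obstacle is the combinatorial bookkeeping: the $G_2$-torsion introduces contributions absent from the Ricci-flow analogue of \cite{LLX-2023}, so each curvature-type quantity (Ricci, scalar $R$, torsion norm $|\mathbf{T}|^2$, and the antisymmetric $\widehat{\mathbf{T}}$) must be traced through the Bochner identity in Lemma~\ref{lemma 4.1} and paired with the appropriate gradient estimate to exactly match the constants $\tfrac{2}{3}R_0$, $cK$, $\tfrac{28+C_1(A,\eta)}{t}$, and $\tfrac{7}{2}C(t)$ in the exponential weight. Once these bounds are assembled, $P'/P-Q'/Q$ is dominated by the full correction term and $(\ln|U|)'(t)\le 0$ follows, establishing both (i) and (ii) simultaneously.
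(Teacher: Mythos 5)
Your top-level strategy — reduce to showing $(\ln|U|)'\le 0$, compute $Q'(t)=-2P(t)$ exactly from the heat equation and the evolution of $d\mu_{g(t)}$, bound $P'(t)$ via Lemma~\ref{lemma 4.1}, and control $P/Q$ by Theorem~\ref{theorem 3.4} — is sound and, once cleaned up, actually gives a shorter argument than the paper's. The paper deliberately does \emph{not} simplify $I'$ to $-2P$; instead it keeps $I'=2\int(u\partial_t u-|\nabla u|^2)d\mu-2\int u\Delta u\,d\mu$, uses Theorem~\ref{theorem 3.1} (with $\alpha=2$) to produce the $\tfrac{28}{t}+cK$ piece, applies Young's inequality to $-2\int u\Delta u\,d\mu$ with the parameter tuned to $\tfrac{7}{2}C(t)$, and then cancels the resulting $\tfrac{2}{7C}\int|\Delta u|^2 d\mu$ against $-2h\int|\nabla^2 u|^2 d\mu$ via the trace inequality. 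Your route bypasses all of that: with $-Q'/Q=2P/Q\le 2C(t)$ and $P'/P\le -\tfrac{2}{3}R_0$ (dropping both non-positive terms in $P'$), one gets $(\ln|U|)'\le -\tfrac{28+C_1}{t}-cK-\tfrac{3}{2}C(t)<0$ directly, with the Li--Yau estimate not used at all.

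However, your constant bookkeeping in the second half is genuinely wrong, and suggests you did not fully realize how much your reduction simplifies matters. Concretely: (a) the skew-torsion term $-4\int\sum_k|\sum_i\mathbf{T}_{ik}\nabla_i u|^2 d\mu$ is non-positive, so in an \emph{upper} bound on $P'$ it should simply be dropped; it cannot ``yield the $cK$ contribution,'' and bounding it by $|\mathbf{T}|^2|\nabla u|^2\le 7K|\nabla u|^2$ goes in the useless direction. (b) There is no Cauchy--Schwarz step producing the factor $\tfrac{7}{2}$ once you have $-Q'/Q=2P/Q\le 2C(t)$; the difference $\tfrac{7}{2}C(t)-2C(t)$ is just slack in the chosen exponential weight. (c) The ``residual $\partial_t u/u$ factor'' you claim is controlled by Theorem~\ref{theorem 3.1} does not exist in your setup, since $Q'=-2P$ has already eliminated all $\partial_t u$ terms. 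In your version the whole block $\tfrac{28+C_1}{t}+cK$ is unused slack; in the paper's version each of those constants is accounted for by a specific inequality. So while the proposal would lead to a correct proof, the explanatory paragraph about matching each constant is not a valid account of either the paper's argument or of the cleaner one your reduction actually permits.
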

\begin{proof}
Before discussing the monotonicity of $U(t)$, we need to calculate the derivative of $I(t)$ and $D(t)$. 
Taking $\displaystyle{\alpha=2,a=\frac{1}{4},b=\frac{1}{8}}$ in Theorem \ref{theorem 3.1}, we obtain
\begin{equation}
        \frac{|\nabla u|^{2}}{u^{2}}-2\frac{\partial_{t} u}{u}\leq \frac{28}{t}+cK\label{gradient estiamte},
\end{equation}
where $\displaystyle{c=\frac{392}{3}+448+28\sqrt{58}}$. Then we can get the derivative of $I(t)$,
\begin{align}
I'(t)&=\frac{d}{dt}\left(\int_{M}u^{2} d\mu\right)\label{I'}\\
&=2\int_{M}\left(u\cdot\partial_{t}u-|\nabla u|^{2}\right)d \mu -2\int_{M}u\Delta u\!\ d\mu\notag\\
&=2\int_{M}\left(u\cdot\partial_{t}u-\frac{1}{2}|\nabla u|^{2}\right)d \mu -2\int_{M}u\Delta u \!\ d\mu-\int_{M}|\nabla u|^{2}d\mu\notag\\
&\geq-\left(\frac{28}{t}+cK\right)I(t)-2\int_{M}u\Delta u \!\ d\mu-\int_{M}|\nabla u|^{2}d\mu\notag\\
&\geq-\left(\frac{28+C_{1}(A,\eta)}{t}+cK+\frac{7C(t)}{2}\right)I(t)-\frac{2}{7C(t)}\int_{M}|\Delta u|^{2}d\mu.\notag
\end{align}
where $\displaystyle{C_{1}(A,\eta)=\ln\frac{A}{\eta}+\lambda\frac{A^{2}}{\eta}}$, and we use Young's inequality and Theorem \ref{theorem 3.4} in the last line.

For the derivative of $D(t)$, according to Lemma \ref{lemma 4.1}, we have
\begin{align}
D'(t)&=h'(t)\int_{M}|\nabla u|^{2}d \mu+h(t)\frac{d}{dt}\left(\int_{M}|\nabla u|^{2}d\mu\right)\label{D'}\\
&=h'(t)\int_{M}|\nabla u|^{2}d \mu+h(t)\int_{M}(\partial_{t}-\Delta)|\nabla u|^{2}d\mu\notag\\
&=h'(t)\int_{M}|\nabla u|^{2}d\mu-2h(t)\int_{M}|\nabla^{2} u|^{2}d\mu\notag\\
&\quad+\frac{2}{3}h(t)\int_{M}|{\bf T}|^{2}|\nabla u|^{2}d\mu-4h(t)\int_{M}|{\bf T}_{ik}\nabla^{i}u|^{2}d\mu\notag.
\end{align}
If $h(t)< 0$, then by $(\ref{D'})$,
\begin{align}
D'(t)\geq\left( h^{\prime}-\frac{2}{3}hR_{0}\right)\int_{M}|\nabla u|^{2}d\mu-2h\int_{M}|\nabla^{2} u|^{2}d\mu\notag,
\end{align}
together with $(\ref{I'})$ and Theorem \ref{theorem 3.4}, yields
\begin{align}
I^{2}(t)U'(t)&\geq \exp\left\{-\int_{t_{0}}^{t}\left(\frac{h'(s)}{h(s)}-\frac{2}{3}R_{0}+\frac{28+C_{1}(A,\eta)}{s}+cK+\frac{7C(s)}{2}\right) ds\right\}\notag\\
&\quad\cdot\left[-2h I(t)\left(\int_{M}|\nabla^{2}u|^{2}d\mu\right)+\frac{2h}{7C(t)}\left(\int_{M}|\Delta u|^{2}d\mu\right)\left(\int_{M}|\nabla u|^{2}d\mu\right)\right]\notag\\
&\geq \exp\left\{-\int_{t_{0}}^{t}\left(\frac{h'(s)}{h(s)}-\frac{2}{3}R_{0}+\frac{28+C_{1}(A,\eta)}{s}+cK+\frac{7C(s)}{2}\right) ds\right\}\notag\\
&\quad\cdot\left[-\frac{2h}{7}I(t)\left(\int_{M}|\Delta u|^{2}d\mu\right)+\frac{2h}{7C(t)}\left(\int_{M}|\Delta u|^{2}d\mu\right)\left(\int_{M}|\nabla u|^{2}d\mu\right)\right]\notag\\
&\geq \exp\left\{-\int_{t_{0}}^{t}\left(\frac{h'(s)}{h(s)}-\frac{2}{3}R_{0}+\frac{28+C_{1}(A,\eta)}{s}+cK+\frac{7C(s)}{2}\right) ds\right\}\notag\\
&\quad\cdot\left[-\frac{2h}{7}I(t)\left(\int_{M}|\Delta u|^{2}d\mu\right)+\frac{2h}{7C(t)}\cdot\frac{C_{1}(A,\eta)}{t}I(t)\left(\int_{M}|\Delta u|^{2}d\mu\right)\right]\notag\\
&=0\notag
\end{align}
where we take trace over $|\nabla^{2}u|^{2}$ and let $\displaystyle{C(t)=\frac{C_{1}(A,\eta)}{t}}$.

On the other hand, if $h(t)>0$, similarly, we have
\begin{align}
I^{2}(t)U'(t)&\leq \exp\left\{-\int_{t_{0}}^{t}\left(\frac{h'(s)}{h(s)}-\frac{2}{3}R_{0}+\frac{28+C_{1}(A,\eta)}{s}+cK+\frac{7C(s)}{2}\right) ds\right\}\notag\\
&\quad\cdot\left[-2hI(t)\left(\int_{M}|\nabla^{2}u|^{2}d\mu\right)+\frac{2h}{7C(t)}\left(\int_{M}|\Delta u|^{2}d\mu\right)\left(\int_{M}|\nabla u|^{2}d\mu\right)\right]\notag\\
&\leq \exp\left\{-\int_{t_{0}}^{t}\left(\frac{h'(s)}{h(s)}-\frac{2}{3}R_{0}+\frac{28+C_{1}(A,\eta)}{s}+cK+\frac{7C(s)}{2}\right) ds\right\}\notag\\
&\quad\cdot\left[-\frac{2h}{7}I(t)\left(\int_{M}|\Delta u|^{2}d\mu\right)+\frac{2h}{7C(t)}\cdot\frac{C_{1}(A,\eta)}{t}I(t)\left(\int_{M}|\Delta u|^{2}d\mu\right)\right]\notag\\
&=0.\notag
\end{align}
Thus we get the desired result.
\end{proof}

We define the first nonzero eigenvalue of the Laplacian $G_{2}$ flow $(M,\varphi(t))_{t\in (0,T]}$ with the weighted measure $d\mu_{g(t)}$ by
$$\lambda_{M}(t)=\inf\left\{\left.\frac{\displaystyle{\int_{M}|\nabla_{g(t)}u|_{g(t)}^{2}d \mu_{g(t)}}}{\displaystyle{\int_{M}u^{2}d \mu_{g(t)}}}\right| 0<u\in C^{\infty}(M)\setminus \{0\}\right\}.$$
Then we have the following corollary by Theorem \ref{theorem 4.2}.

\begin{corollary}
Let $(M,\varphi(t))_{t\in(0,T]}$ be the solution of the Laplacian $G_{2}$ flow $\eqref{G2 flow}$ on a closed $7$-dimensional manifold $M$ with $T<+\infty$ and $-Kg(t)\leq {\rm Ric}(g(t))\leq0$, where $g(t)$ is the Riemannian metric associated with $\varphi(t)$ and $K$ is a positive constant. If $u(t)$ is a positive solution of the heat equation $\eqref{heat equation}$ with $\eta\leq u(0)\leq A$, then for any $t\in[t_{0},t_{1}]\subset(0,T)$, the following holds.

\begin{itemize}
    \item[(i)]  If $h(t)$ is a negative time-dependent function, then $\beta(t)h(t)\lambda_{M}(t)$ is a monotone increasing function.
    \item[(ii)] If $h(t)$ is a positive time-dependent function, then $\beta(t)h(t)\lambda_{M}(t)$ is a monotone decreasing function.
\end{itemize}
where
$$
\beta(t)=\exp\left\{-\int_{t_{0}}^{t}\left(\frac{h'(s)}{h(s)}-\frac{2}{3}R_{0}+\frac{28+C_{1}(A,\eta)}{s}+cK+\frac{7C(s)}{2}\right) ds\right\}.
$$
\end{corollary}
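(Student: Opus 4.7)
The plan is to deduce this corollary directly from Theorem \ref{theorem 4.2} by running the heat flow with initial data chosen (nearly) to realize the Rayleigh quotient characterization of $\lambda_{M}$. The key observation is that
$$U(t)\;=\;\beta(t)\cdot h(t)\cdot\frac{\int_{M}|\nabla_{g(t)} u(t)|_{g(t)}^{2}\,d\mu_{g(t)}}{\int_{M}u^{2}(t)\,d\mu_{g(t)}},$$
and that for every admissible $u(t)$ the Rayleigh quotient $\int|\nabla u|^{2}d\mu/\int u^{2}d\mu$ is bounded below by $\lambda_{M}(t)$, with equality (or $\varepsilon$-approximation) reached by minimizers of the Rayleigh quotient at time $t$.

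For part (i), fix $t'<t$ in $[t_{0},t_{1}]$ and choose a positive smooth function $v$ whose Rayleigh quotient at time $t'$ lies within $\varepsilon$ of $\lambda_{M}(t')$. Solve the heat equation forward from time $t'$ with initial datum $v$. Although $\beta$ in the corollary is anchored at $t_{0}$, the identity $\beta(t)/\beta(t')=\exp\{-\int_{t'}^{t}[\cdots]\,ds\}$ shows that applying Theorem \ref{theorem 4.2}(i) with base point shifted to $t'$ (which the proof of Theorem \ref{theorem 4.2} allows, since none of its estimates depends on the specific anchor) yields precisely $\beta(t)D(t)/I(t)\geq \beta(t')D(t')/I(t')$, i.e.\ $U(t)\geq U(t')$. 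At $s=t'$ the Rayleigh quotient of $v$ is within $\varepsilon$ of $\lambda_{M}(t')$, giving $U(t')=\beta(t')h(t')\lambda_{M}(t')+O(\varepsilon)$. At $s=t$ the evolved solution $u(t)$ still satisfies $\int|\nabla u(t)|^{2}/\int u^{2}(t)\geq \lambda_{M}(t)$; since $h(t)<0$ and $\beta(t)>0$, multiplication reverses this inequality and produces $U(t)\leq \beta(t)h(t)\lambda_{M}(t)$. Chaining the three relations $\beta(t)h(t)\lambda_{M}(t)\geq U(t)\geq U(t')=\beta(t')h(t')\lambda_{M}(t')+O(\varepsilon)$ and sending $\varepsilon\to 0$ concludes (i).

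Part (ii) is completely symmetric: with $h(t)>0$ the Rayleigh inequality is preserved under multiplication, giving $U(t)\geq \beta(t)h(t)\lambda_{M}(t)$, while Theorem \ref{theorem 4.2}(ii) supplies $U(t)\leq U(t')$, and one obtains $\beta(t)h(t)\lambda_{M}(t)\leq \beta(t')h(t')\lambda_{M}(t')$. I expect the main obstacle to be the technical step of arranging that the near-minimizer $v$ simultaneously (a) nearly achieves the infimum defining $\lambda_{M}(t')$, (b) is strictly positive so that the heat flow starting from it remains positive on $[t',t_{1}]$, and (c) satisfies the pointwise bounds $\eta\leq v\leq A$ that enter the definitions of $C_{1}(A,\eta)$ and $C(s)$ appearing in $\beta$. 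This can be handled by translating the minimizer by a large positive constant and rescaling, a modification that perturbs the Rayleigh quotient only by a factor which tends to $1$ as the translation/rescaling is tuned, and which can be absorbed into the $\varepsilon$ above.
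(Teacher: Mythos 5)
Your overall strategy---run the heat flow from a near-minimizer of the Rayleigh quotient at $t'$, apply Theorem \ref{theorem 4.2} to the resulting $U$, and squeeze---is the natural one. But two of the difficulties you flag in your last paragraph are genuine obstructions, and the fix you propose does not resolve them. Adding a large positive constant $c$ to $v$ sends the Rayleigh quotient to approximately $\int_M|\nabla v|^2\,d\mu/c^2\to 0$, so it destroys the near-minimizer property whenever $\lambda_M(t')>0$ (and if $\lambda_M(t')=0$ the statement is vacuous anyway); rescaling $v\mapsto cv$ leaves the Rayleigh quotient invariant but cannot simultaneously enforce $v\geq\eta$ and $v\leq A$. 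Indeed, as literally written, the paper's definition of $\lambda_M$ admits constant test functions, forcing $\lambda_M\equiv 0$; the intended definition presumably imposes orthogonality to constants, but then admissible test functions must change sign, and your heat-flow argument (which needs positive $u$ to invoke Theorems \ref{theorem 3.1} and \ref{theorem 3.4}) cannot reach the infimum.

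More seriously, the function $\beta$ is not a fixed universal weight: the terms $28/s$ and $C(s)=C_1(A,\eta)/s$ in the exponent come from the Li--Yau and Hamilton gradient estimates, whose maximum-principle proofs are anchored at the initial time of the solution and involve the bounds $\eta,A$ on that initial datum. If you restart the heat equation at $t'$ with a near-minimizer $v$, these estimates become inequalities with $1/(s-t')$ in place of $1/s$ and with $\eta'=\min v$, $A'=\max v$ in place of $\eta,A$, so the $U$ coming out of Theorem \ref{theorem 4.2} for this new solution is normalized by a different factor $\beta'$. Your final chain $\beta(t)h(t)\lambda_M(t)\geq U(t)\geq U(t')=\beta(t')h(t')\lambda_M(t')+O(\varepsilon)$ therefore silently mixes $\beta'$ with $\beta$, and as written does not prove monotonicity of $\beta(t)h(t)\lambda_M(t)$ for the $\beta$ built from the fixed solution $u$ in the corollary's hypotheses.
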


\begin{corollary}
Let $(M,\varphi(t))_{t\in(0,T]}$ be the solution of the Laplacian $G_{2}$ flow $\eqref{G2 flow}$ on a closed $7$-dimensional manifold $M$  with $T<+\infty$ and $-Kg(t)\leq {\rm Ric}(g(t))\leq0$, where $g(t)$ is the Riemannian metric associated with $\varphi(t)$ and $K$ is a positive constant. If $u(t)$ is a positive solution of the heat equation $\eqref{heat equation}$ with $\eta\leq u(0)\leq A$, then for any $t\in[t_{0},t_{1}]\subset(0,T)$, 
$$I(t_{1})\geq\exp\left\{2U(t_{0})\int_{t_{0}}^{t_{1}}\frac{dt}{-h(t)\beta(t)}\right\}I(t_{0}).$$
\end{corollary}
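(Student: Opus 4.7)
The plan is to derive the estimate by an $L^{2}$-computation: express $\frac{d}{dt}\ln I(t)$ explicitly, rewrite it in terms of the frequency $U(t)$, and then invoke the monotonicity of $U(t)$ proved in Theorem \ref{theorem 4.2}. Throughout I assume $h(t)<0$, so that $-h(t)\beta(t)>0$ and the hypothesis of Theorem \ref{theorem 4.2}(i) is satisfied; the form of the conclusion, with $1/(-h(t)\beta(t))$ kept positive, makes clear that this is the intended case.

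The first step is to compute $I'(t)$. Using $\partial_{t}u=\Delta_{g(t)}u$, the volume form evolution $\partial_{t}(dV_{g(t)})=-\tfrac{2}{3}R_{g(t)}dV_{g(t)}$, and the conjugate heat equation $\partial_{t}\mathbf{K}(t)=-\Delta_{g(t)}\mathbf{K}(t)+\tfrac{2}{3}R_{g(t)}\mathbf{K}(t)$ from Section \ref{section2}, the scalar curvature contributions cancel and one obtains $\partial_{t}(d\mu_{g(t)})=-\Delta_{g(t)}\mathbf{K}(t)\,dV_{g(t)}$. Hence
\begin{equation*}
I'(t)=2\int_{M}u\,\Delta_{g(t)}u\,d\mu_{g(t)}-\int_{M}u^{2}\Delta_{g(t)}\mathbf{K}(t)\,dV_{g(t)},
\end{equation*}
and an integration by parts on the closed manifold $M$ (moving the Laplacian off of $\mathbf{K}$ and onto $u^{2}$) converts the last term into $-2\int_{M}u\,\Delta_{g(t)}u\,d\mu_{g(t)}-2\int_{M}|\nabla_{g(t)}u|_{g(t)}^{2}d\mu_{g(t)}$. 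The $u\Delta u$ pieces cancel and one is left with the clean identity
\begin{equation*}
I'(t)=-2\int_{M}|\nabla_{g(t)}u|_{g(t)}^{2}d\mu_{g(t)}.
\end{equation*}

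Unraveling the definition $U(t)=\beta(t)h(t)\int_{M}|\nabla_{g(t)}u|_{g(t)}^{2}d\mu_{g(t)}/I(t)$, this identity translates into
\begin{equation*}
\frac{d}{dt}\ln I(t)=\frac{I'(t)}{I(t)}=\frac{2U(t)}{-h(t)\beta(t)}.
\end{equation*}
By Theorem \ref{theorem 4.2}(i), $U$ is monotone increasing on $[t_{0},t_{1}]$ since $h$ is negative, so $U(t)\geq U(t_{0})$; combined with $-h(t)\beta(t)>0$ this yields $\tfrac{d}{dt}\ln I(t)\geq 2U(t_{0})/(-h(t)\beta(t))$. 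Integrating from $t_{0}$ to $t_{1}$ and exponentiating produces the desired inequality. The only mildly subtle point is the $I'(t)$ computation, where the scalar curvature contributions from $\partial_{t}\mathbf{K}$ and $\partial_{t}dV_{g(t)}$ must cancel exactly in order to reduce $\partial_{t}d\mu_{g(t)}$ to $-\Delta_{g(t)}\mathbf{K}\,dV_{g(t)}$; after that, everything is essentially formal and does not even require the gradient estimates used in the proof of Theorem \ref{theorem 4.2}.
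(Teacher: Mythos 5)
Your proof is correct and follows essentially the same route as the paper: compute $I'(t)=-2\int_{M}|\nabla u|^{2}d\mu$ exactly (using the heat equation and the evolution of $d\mu$), rewrite $\tfrac{d}{dt}\ln I=2U/(-h\beta)$, apply the monotonicity of $U$ from Theorem \ref{theorem 4.2}, and integrate. Your side remark that the conclusion's form singles out $h<0$ is a minor misreading — the inequality is in fact valid in both sign cases (for $h>0$, $U$ decreases and $-h\beta<0$, so the two sign flips cancel), which is why the paper says the $h>0$ case is "similar" — but this does not affect the correctness of the argument you gave.
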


\begin{proof}
We give the proof of case $h(t)<0$ (The case $h(t)>0$ is similar to it). According to the definition of $U(t)$, yields
\begin{align}
    \label{4.7}\frac{d}{dt}\ln(I(t))=\frac{I^{\prime}(t)}{I(t)}=-\frac{2D(t)}{h(t)I(t)}=\frac{2U(t)}{-h(t)\beta(t)}.
\end{align}
By Theorem \ref{theorem 4.2}, integrating $\eqref{4.7}$ from $t_{0}$ to $t_{1}$, we get
\begin{align}
    \ln I(t_{1})-\ln I(t_{0})=2\int_{t_{0}}^{t_{1}}\frac{U(t)}{-h(t)\beta(t)}dt\geq 2U(t_{0})\int_{t_{0}}^{t_{1}}\frac{dt}{-h(t)\beta(t)}.\notag
\end{align}
From the boundedness of time-dependent function $h(t)$, we have
$$I(t_{1})\geq\exp\left\{2U(t_{0})\int_{t_{0}}^{t_{1}}\frac{dt}{-h(t)\beta(t)}\right\}I(t_{0}).$$
We prove this corollary.
\end{proof}

\section{Parabolic frequency on Laplacian $G_{2}$ flow with bounded Bakry-\'{E}mery Ricci curvature }\label{section 5}

In this section, we study the parabolic frequency for the solution of the linear equation $\eqref{lpe}$ and the more general equations under the Laplacian $G_{2}$ flow $\eqref{G2 flow}$ with bounded Bakry-\'{E}mery Ricci curvature.

For a time-dependent function $u=u(t):M\times[t_{0},t_{1}]\rightarrow \mathbb{R}$ with $u(t), \partial_{t}u(t)\in W^{2,2}_{0}(d\mu_{g(t)})$ for all $t\in[t_{0},t_{1}]\subset(0,T)$, we denote by
\begin{align}
I(t)&=\int_{M} u^{2}(t) d \mu_{g(t)},\notag\\
D(t)&=h(t)\int_{M}|\nabla_{g(t)}u(t)|_{g(t)}^{2}d \mu_{g(t)}\notag\\
&=-h(t)\int_{M}\langle u(t),\Delta_{g(t),f(t)}u(t)\rangle_{g(t)} d\mu_{g(t)},\notag\\
U(t)&=\exp\left\{-\int_{t_{0}}^{t}\left(-\frac{2}{3}R_{0}+\frac{h'(s)+\kappa(s)}{h(s)}\right) ds\right\}\frac{D(t)}{I(t)},\notag
\end{align}
where $\displaystyle{R_{0}=\min_{M\times[t_{0},t_{1}]}R(t)}$, $h(t)$ and $\kappa(t)$ are both time-dependent smooth functions.

\subsection{Parabolic frequency for the linear heat equation under Laplacian $G_{2}$ flow.}
In this section, we consider the parabolic frequency $U(t)$ for the solution of the linear heat equation 
\begin{equation}
    (\partial_{t}-\Delta_{g(t)})u(t)=a(t)u(t)\label{lpe}
\end{equation}
under the Laplacian $G_{2}$ flow $\eqref{G2 flow}$, where $a(t)$ is a time-dependent smooth function. 
At first, we give some lemmas.

\begin{lemma}\label{lemma 5.1}
For any $u\in W^{2,2}_{0}(d\mu_{g(t)})$ , we have
\begin{align}
\int_{M}|\nabla_{g(t)}^{2} u|^{2}_{g(t)}d \mu_{g(t)}=\int_{M}\left(|\Delta_{g(t),f(t)}u|_{g(t)}^{2}-\text{\rm Ric}_{f(t)}(\nabla_{g(t)}u,\nabla_{g(t)}u)\right)d \mu_{g(t)}.\notag
\end{align}
\end{lemma}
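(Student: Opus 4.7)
The plan is to derive the identity by integrating the weighted Bochner formula \eqref{Bochner} against the weighted measure $d\mu_{g(t)}$ and then applying weighted integration by parts. Fix $t$ and suppress it from the notation. Starting from
$$
\Delta_{g,f}\!\left(|\nabla u|^{2}\right)=2|\nabla^{2}u|^{2}+2\langle\nabla u,\nabla\Delta_{g,f}u\rangle+2\,\text{\rm Ric}_{f}(\nabla u,\nabla u),
$$
I would integrate both sides against $d\mu=(4\pi\tau)^{-7/2}e^{-f}dV$ over the closed manifold $M$.

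The key computational input is the fact that $\Delta_{g,f}$ is self-adjoint with respect to $d\mu$: for any smooth $v$, $\int_{M}\Delta_{g,f}v\,d\mu=0$, since $\Delta_{g,f}v=e^{f}\mathrm{div}(e^{-f}\nabla v)$ so that $\Delta_{g,f}v\,d\mu$ is (up to the constant $(4\pi\tau)^{-7/2}$) the exterior derivative of an $(n-1)$-form. Applied with $v=|\nabla u|^{2}$, the left-hand side of the integrated Bochner formula vanishes. Applied in the form
$$
\int_{M}\langle\nabla u,\nabla\Delta_{g,f}u\rangle\,d\mu=-\int_{M}(\Delta_{g,f}u)^{2}\,d\mu,
$$
which is the standard weighted integration by parts (justified by $u,\partial_{t}u\in W_{0}^{2,2}(d\mu)$ via approximation by smooth functions), one obtains
$$
0=2\int_{M}|\nabla^{2}u|^{2}d\mu-2\int_{M}(\Delta_{g,f}u)^{2}d\mu+2\int_{M}\text{\rm Ric}_{f}(\nabla u,\nabla u)\,d\mu.
$$
Rearranging yields the stated identity.

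The steps are all standard; the only mild subtleties are ensuring that the boundary-type terms truly vanish and that the integration by parts is valid under the regularity hypothesis $u\in W_{0}^{2,2}(d\mu_{g(t)})$. Since $M$ is closed and $f$ is smooth in the spatial variables, $d\mu$ is comparable to $dV$ and the usual density of $C^{\infty}(M)$ in $W^{2,2}_{0}(d\mu)$ lets one pass from smooth $u$ (where Bochner and divergence theorem are immediate) to the weak class by a limiting argument, provided both sides are continuous in the $W^{2,2}(d\mu)$-topology, which they are. No genuine obstacle arises; this is essentially a bookkeeping lemma packaging the weighted Bochner identity into the form needed in Section~\ref{section 5}.
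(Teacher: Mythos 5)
Your proof is correct and is exactly the standard derivation: integrate the weighted Bochner formula~\eqref{Bochner} against $d\mu_{g(t)}$, use that $\int_{M}\Delta_{g(t),f(t)}v\,d\mu_{g(t)}=0$ on a closed manifold together with weighted integration by parts $\int_{M}\langle\nabla u,\nabla\Delta_{g(t),f(t)}u\rangle\,d\mu_{g(t)}=-\int_{M}(\Delta_{g(t),f(t)}u)^{2}\,d\mu_{g(t)}$, and rearrange. The paper itself does not write out an argument -- it simply cites Lemma~1.13 of \cite{frequency on RF} -- but that reference is proved by precisely this computation, so in substance your approach coincides with the one the paper invokes.
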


\begin{proof}
This result has been proved in Lemma 1.13 of \cite{frequency on RF}.
\end{proof}

\begin{theorem}\label{theorem 5.2}
Let $(M,\varphi(t))_{t\in[0,T]}$ be the solution of the Laplacian $G_{2}$ flow $\eqref{G2 flow}$ on a closed $7$-dimensional manifold $M$  with $T<+\infty$ and $\displaystyle{\text{\rm Ric}_{f(t)}\leq \frac{\kappa(t)}{2h(t)}g(t)}$, where $g(t)$ is the Riemannian metric associated with $\varphi(t)$ . 

\begin{itemize}
    \item[(i)] If $h(t)$ is a negative time-dependent function, then the parabolic frequency $U(t)$ is monotone increasing along the Laplacian $G_{2}$ flow.
    \item[(ii)] If $h(t)$ is a positive time-dependent function, then the parabolic frequency $U(t)$ is monotone decreasing along the Laplacian $G_{2}$ flow.
\end{itemize}
\end{theorem}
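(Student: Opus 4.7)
The strategy is to form the logarithmic derivative of $U(t)$ and to exhibit the result as a sum of manifestly nonpositive pieces. Since the prefactor $\beta(t):=\exp\{-\int_{t_{0}}^{t}(-\tfrac{2}{3}R_{0}+(h'+\kappa)/h)\,ds\}$ is strictly positive and $I(t)>0$, the sign of $U(t)$ (when nonzero) agrees with the sign of $h(t)$, so it will suffice to prove $U'(t)/U(t)\leq 0$: this gives (i) $U'\geq 0$ when $h<0$ and (ii) $U'\leq 0$ when $h>0$. Writing $U=\beta D/I$ and cancelling $h'/h$, the object to estimate is
\begin{equation*}
\frac{U'(t)}{U(t)} \;=\; \frac{2}{3}R_{0} - \frac{\kappa(t)}{h(t)} - 2a(t) + \frac{2\int_{M}|\nabla u|^{2}\,d\mu_{g(t)}}{I(t)} + \frac{\frac{d}{dt}\!\int_{M}|\nabla u|^{2}\,d\mu_{g(t)}}{\int_{M}|\nabla u|^{2}\,d\mu_{g(t)}}.
\end{equation*}

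I would first compute $I'(t)$ by using $\partial_{t}d\mu_{g(t)}=-\Delta\mathbf{K}\,dV_{g(t)}$, integration by parts on the closed manifold $M$, and the linear heat equation $(\partial_{t}-\Delta)u=a(t)u$; the result is $I'(t)=2a(t)I(t)-2\int_{M}|\nabla u|^{2}d\mu_{g(t)}$. The same maneuver, combined with Lemma \ref{lemma 4.1} applied to the linear heat equation (so that $\nabla(\partial_{t}u-\Delta u)=a(t)\nabla u$), gives
\begin{equation*}
\frac{d}{dt}\!\int_{M}\!|\nabla u|^{2}d\mu_{g(t)} = \int_{M}\!\left[-2|\nabla^{2}u|^{2}+2a|\nabla u|^{2}+\tfrac{2}{3}|\mathbf{T}|^{2}|\nabla u|^{2}-4\sum_{k}\Big|\sum_{i}\mathbf{T}_{ik}\nabla_{i}u\Big|^{2}\right]d\mu_{g(t)}.
\end{equation*}
Lemma \ref{lemma 5.1} then rewrites $\int|\nabla^{2}u|^{2}d\mu_{g(t)}$ as $\int|\Delta_{g,f}u|^{2}d\mu_{g(t)}-\int\mathrm{Ric}_{f}(\nabla u,\nabla u)\,d\mu_{g(t)}$.

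Substituting everything into the formula for $U'/U$, the two $2a(t)$ terms cancel and the remainder splits into four independently nonpositive groups: (a) $\tfrac{2}{3}R_{0}+\tfrac{2}{3}\int|\mathbf{T}|^{2}|\nabla u|^{2}d\mu/\int|\nabla u|^{2}d\mu\leq 0$, using $R\geq R_{0}$ together with $|\mathbf{T}|^{2}=-R$; (b) $-\kappa/h+2\int\mathrm{Ric}_{f}(\nabla u,\nabla u)d\mu/\int|\nabla u|^{2}d\mu\leq 0$, directly from the hypothesis $\mathrm{Ric}_{f}\leq(\kappa/2h)\,g$; (c) $-4\int\sum_{k}|\sum_{i}\mathbf{T}_{ik}\nabla_{i}u|^{2}d\mu/\int|\nabla u|^{2}d\mu\leq 0$, manifestly; and (d) the Cauchy--Schwarz inequality applied to $\int|\nabla u|^{2}d\mu=-\int u\,\Delta_{g,f}u\,d\mu$ yields $\int|\Delta_{g,f}u|^{2}d\mu\geq(\int|\nabla u|^{2}d\mu)^{2}/I$, whence $-2\int|\Delta_{g,f}u|^{2}d\mu/\int|\nabla u|^{2}d\mu+2\int|\nabla u|^{2}d\mu/I\leq 0$. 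Hence $U'(t)/U(t)\leq 0$ on $[t_{0},t_{1}]$, and the dichotomy on the sign of $h(t)$ yields the two monotonicity assertions.

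The main technical point to watch is sign bookkeeping together with the two Laplacian-$G_{2}$--specific terms that do not appear in the Ricci-flow analogue: the $\tfrac{2}{3}|\mathbf{T}|^{2}|\nabla u|^{2}$ contribution is absorbed precisely by the $\tfrac{2}{3}R_{0}$ shift built into the exponential prefactor (via $R\geq R_{0}$), while the skew-symmetric torsion quadratic form contributes a free nonpositive term. Apart from these two bookkeeping items, the argument is formally parallel to the Ricci-flow proof of Baldauf--Kim and Li--Wang.
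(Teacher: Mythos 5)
Your proposal is correct and uses precisely the paper's ingredients—Lemma \ref{lemma 4.1}, Lemma \ref{lemma 5.1}, the Bakry--\'{E}mery bound, the $R\geq R_{0}$ and $|\mathbf{T}|^{2}=-R$ bookkeeping, and Cauchy--Schwarz—assembled in essentially the same way. The only difference is organizational: you track $U'/U$ as a sum of four nonpositive pieces, while the paper normalizes $I$ and $D$ by $\exp\{-\int 2a\}$ and $\exp\{-\int(2a-\tfrac{2}{3}R_{0}+(h'+\kappa)/h)\}$ respectively and shows $\hat{I}^{2}U'\geq 0$ via the product rule; your logarithmic form is slightly cleaner notationally but requires $\int_{M}|\nabla u|^{2}\,d\mu>0$ (so $U\neq 0$), a degeneracy the paper's formulation sidesteps automatically.
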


\begin{proof}
We only give the proof of the first case (The second case is similar to it).
Our main purpose is to compute the $I'(t)$ and $D'(t)$. Under the Laplacian $G_{2}$ flow $\eqref{G2 flow}$,  combining with the linear heat equation $\eqref{lpe}$ and Lemma \ref{lemma 4.1}, we can obtain
\begin{align}
(\partial_{t}-\Delta)|\nabla u|^{2} 
&=2\langle\nabla u,\nabla(\partial_{t}-\Delta)u\rangle-2|\nabla^{2} u|^{2}+\frac{2}{3}|{\bf T}|^{2}|\nabla u|^{2}\label{3.6}\\
&\quad-4\sum_{1\leq k\leq 7}\bigg|\sum_{1\leq i\leq 7}{\bf T}_{ik}\nabla_{i}u\bigg|^{2}\notag\\
&=2a(t)|\nabla u|^{2}-2|\nabla^{2} u|^{2}+\frac{2}{3}|{\bf T}|^{2}|\nabla u|^{2}-4\sum_{1\leq k\leq 7}\bigg|\sum_{1\leq i\leq 7}{\bf T}_{ik}\nabla_{i}u\bigg|^{2}.\notag
\end{align}
From $\eqref{volume}$ and integration by parts, we get the derivative of $I(t)$ as follow
\begin{align}\label{5.3}
I'(t) &=\int_{M}\left(2u\partial_{t}u-u^{2}\frac{\Delta {\bf K}}{{\bf K}}\right)d\mu\\
&=\int_{M}\left(2u\partial_{t}u-\Delta (u^{2})\right)d\mu\notag\\
&=\int_{M}\left(2u\partial_{t}u-2u\Delta u-2|\nabla u|^{2}\right)d\mu\notag\\
&=-\frac{2}{h}D(t)+2a(t) I(t).\notag
\end{align}
If we write
$$\hat{I}(t)=\exp\left\{-\int_{t_{0}}^{t} 2a(s)ds\right\}I(t),$$
then we can easily find 
\begin{align}
\hat{I}^{\prime}(t)=-\frac{2}{h}\exp\left\{-\int_{t_{0}}^{t} 2a(s)ds\right\}D(t).
\label{derivative of I}
\end{align}

Next, it turns to compute the derivative of $D(t)$. Using $\eqref{volume}$, $\eqref{3.6}$ and the assumption of the Bakry-\'{E}mery Ricci curvature, we obtain
\begin{align}
D^{\prime}(t) &=h^{\prime}\int_{M}|\nabla u|^{2}d\mu+h\int_{M}\left(\partial_{t}|\nabla u|^{2}-|\nabla u|^{2}\frac{\Delta {\bf K}}{\textbf{K}}\right)d\mu\notag\\
&=h^{\prime}\int_{M}|\nabla u|^{2}d\mu+h\int_{M}(\partial_{t}-\Delta)|\nabla u|^{2}d\mu\notag\\
&=(2ah+h^{\prime})\int_{M}|\nabla u|^{2}d\mu-2h\int_{M}|\nabla^{2} u|^{2}d\mu+\frac{2}{3}h\int_{M}|{\bf T}|^{2}|\nabla u|^{2}d\mu\notag\\
&\quad-4h\int_{M}|{\bf T}_{ik}\nabla^{i}u|^{2}d\mu\notag\\
&\geq(2ah+h^{\prime}-\frac{2}{3}hR_{0})\int_{M}|\nabla u|^{2}d\mu-2h\int_{M}|\nabla^{2}u|^{2}d\mu\notag\\
&=(2ah+h^{\prime}-\frac{2}{3}hR_{0})\int_{M}|\nabla u|^{2}d\mu-2h\int_{M}\left[|\Delta_{f}u|^{2}-\text{\rm Ric}_{f}(\nabla u,\nabla u)\right]d\mu\notag\\
&\geq(\kappa+2ah+h^{\prime}-\frac{2}{3}hR_{0})\int_{M}|\nabla u|^{2}d\mu-2h\int_{M}|\Delta_{f}u|^{2}d\mu\notag\\
&=\left(2a-\frac{2}{3}R_{0}+\frac{h^{\prime}+\kappa}{h}\right)D(t)-2h\int_{M}|\Delta_{f}u|^{2}d\mu.\notag
\end{align}
where we write $\displaystyle{R_{0}=\min_{M\times[t_{0},t_{1}]}R(t)}$.
Similarly, if we write 
$$\widehat{D}(t)=\exp\left\{-\int_{t_{0}}^{t} \left[2a(s)-\frac{2}{3}R_{0}+\frac{h^{\prime}(s)+\kappa(s)}{h(s)}\right]ds\right\} D(t),$$
then we get
\begin{align}
\widehat{D}^{\prime}(t)\geq-2h\exp\left\{-\int_{t_{0}}^{t} \left[2a(s)-\frac{2}{3}R_{0}+\frac{h^{\prime}(s)+\kappa(s)}{h(s)}\right]ds\right\}\int_{M}|\Delta_{f}u|^{2}d\mu.
\label{derivative of D}
\end{align}

Finally, the parabolic frequency $U(t)$ can be written as $\displaystyle{U(t)=\frac{\widehat{D}(t)}{\hat{I}(t)}}$. By $\eqref{derivative of I}$ and $\eqref{derivative of D}$, we can compute the derivative of $U(t)$
\begin{align}
\hat{I}^{2}(t)U^{\prime}(t) &=\widehat{D}^{\prime}(t)\hat{I}(t)-\hat{I}^{\prime}(t)\widehat{D}(t)\\
&\geq-2h\exp\left\{-\int_{t_{0}}^{t} \left[4a(s)-\frac{2}{3}R_{0}+\frac{h^{\prime}(s)+\kappa(s)}{h(s)}\right]ds\right\}\notag\\
&\quad\cdot\left[\left(\int_{M}|\Delta_{f}u|^{2}d\mu\right)\cdot\left(\int_{M}|u|^{2}d\mu\right)-\left(\int_{M}|\nabla u|^{2}d\mu\right)^{2}\right]\notag\\
&\geq-2h\exp\left\{-\int_{t_{0}}^{t} \left[4a(s)-\frac{2}{3}R_{0}+\frac{h^{\prime}(s)+\kappa(s)}{h(s)}\right]ds\right\}\notag\\
&\quad\cdot\left[\left(\int_M\langle u(t),\Delta_{f}u\rangle d\mu\right)^{2}-\left(\int_{M}|\nabla u|^{2}d\mu\right)^{2} \right]\notag\\
&= 0.\notag
\end{align}
The last inequality is directly obtained by the definition of $D(t)$ and the Cauchy-Schwarz inequality. 
\end{proof}

Then we have the following

\begin{corollary}\label{coro 5.3}
Let $(M,\varphi(t))_{t\in[0,T]}$ be the solution of the Laplacian $G_{2}$ flow $\eqref{G2 flow}$ on a closed $7$-dimensional manifold $M$  with $T<+\infty$ and $\displaystyle{\text{\rm Ric}_{f(t)}\leq \frac{\kappa(t)}{2h(t)}g(t)}$, where $g(t)$ is the Riemannian metric associated with $\varphi(t)$ .  If $u(t_{1})=0$, then $u(t)\equiv 0$ for any $t\in[t_{0},t_{1}]\subset(0,T)$.
\end{corollary}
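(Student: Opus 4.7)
The plan is to combine the monotonicity of the parabolic frequency $U(t)$ obtained in Theorem \ref{theorem 5.2} with the ODE satisfied by $I(t)$, and argue by contradiction. Suppose $u$ is not identically zero on $[t_{0},t_{1}]$. Since $I(t)=\int_{M}u^{2}(t)\,d\mu_{g(t)}$ is continuous with $I(t_{1})=0$, I can pick some $s\in[t_{0},t_{1})$ with $I(s)>0$ and set $t^{\ast}=\inf\{t\geq s:I(t)=0\}\in(s,t_{1}]$, so that $I(t)>0$ on $[s,t^{\ast})$ and $I(t^{\ast})=0$. Shifting $t_{0}$ to $s$ if necessary, I may henceforth assume $I(t_{0})>0$ and $I(t^{\ast})=0$ with $t_{0}<t^{\ast}\leq t_{1}$.

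Next, I would turn the identity $I'(t)=-\frac{2}{h(t)}D(t)+2a(t)I(t)$ derived in $\eqref{5.3}$ into an ODE for $\log I(t)$. With $\beta(t)>0$ denoting the exponential prefactor from the definition of $U$ so that $U(t)=\beta(t)D(t)/I(t)$, on $[t_{0},t^{\ast})$ this becomes
\[
\frac{d}{dt}\log I(t)=-\frac{2U(t)}{h(t)\beta(t)}+2a(t).
\]
The key step is the observation that both sign regimes of $h$ give the same one-sided bound $-\tfrac{2U(t)}{h(t)\beta(t)}\geq -\tfrac{2U(t_{0})}{h(t)\beta(t)}$: when $h<0$, Theorem \ref{theorem 5.2} gives $U(t)\geq U(t_{0})$ and $-\tfrac{1}{h\beta}>0$; when $h>0$, Theorem \ref{theorem 5.2} gives $U(t)\leq U(t_{0})$ and $-\tfrac{1}{h\beta}<0$; in each case multiplication preserves the inequality.

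Integrating this bound from $t_{0}$ to $t<t^{\ast}$ then yields
\[
\log I(t)\geq \log I(t_{0})+\int_{t_{0}}^{t}\left[-\frac{2U(t_{0})}{h(s)\beta(s)}+2a(s)\right]ds.
\]
Since $h$, $\beta$, and $a$ are continuous on the closed interval $[t_{0},t_{1}]\subset(0,T)$ with $h$ bounded away from zero, the right-hand side stays bounded below as $t\to t^{\ast}$. Hence $\liminf_{t\to t^{\ast}}I(t)>0$, contradicting $I(t^{\ast})=0$, so $u\equiv 0$ on $[t_{0},t_{1}]$. The main obstacle is not computational but the sign bookkeeping that unifies the cases $h>0$ and $h<0$; once that is handled, the rest follows the standard frequency-based backward uniqueness argument as in \cite{frequency on manifold,frequency on RF}.
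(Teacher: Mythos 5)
Your proposal is correct and follows essentially the same route as the paper: both derive the identity $\frac{d}{dt}\log I(t) = -\frac{2U(t)}{h(t)\beta(t)} + 2a(t)$ from \eqref{5.3}, invoke Theorem \ref{theorem 5.2} to replace $U(t)$ by $U(t_0)$ with the inequality pointing the right way in both sign regimes of $h$, and integrate to conclude that $\log I$ cannot run off to $-\infty$ on $[t_0,t_1]$. The only difference is cosmetic bookkeeping: the paper integrates directly from $t'$ to $t_1$ and reads off the contradiction from $\ln I(t_1)=-\infty$ in the resulting bound, whereas you first localize the putative first zero $t^{\ast}$ of $I$ (and shift the base point $t_0$), then integrate only up to $t<t^{\ast}$ and take a limit — a slightly more careful treatment of the endpoint where $\ln I$ would be undefined, but not a different idea. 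One small point worth flagging: after you shift the base point from $t_0$ to $s$, the definition of $U$ (and the exponential weight $\beta$) changes, since both depend on an integral from $t_0$; the monotonicity statement still applies with the new base point, so the argument survives, but it should be said explicitly that you are re-applying Theorem \ref{theorem 5.2} on the shifted interval rather than using the original $U$.
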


\begin{proof}
We give the proof of case $h(t)<0$ (The case $h(t)>0$ is similar to it).
Recalling the definition of $U(t)$, we get
\begin{align}\label{log I}
\frac{d}{dt}\ln(I(t))&=\frac{I^{\prime}(t)}{I(t)}=-\frac{2D(t)}{h(t)I(t)}+2a(t)\\
&=-\frac{2}{h(t)}\exp\left\{\int_{t_{0}}^{t}\left(-\frac{2}{3}R_{0}+\frac{h^{\prime}(s)+\kappa(s)}{h(s)} \right)ds\right\}U(t)+2a(t).\notag
\end{align}
According to Theorem \ref{theorem 5.2} and integrating $\eqref{log I}$ from $t'$ to $t_{1}$ for any $t'\in[t_{0},t_{1}]$,  yields
\begin{align}
&\quad\ln I(t_{1})-\ln I(t')\notag\\
&= -2\int_{t'}^{t_{1}}\exp\left\{\int_{t_{0}}^{t}\left(\frac{h^{\prime}(s)+\kappa(s)}{h(s)} -\frac{2}{3}R_{0}\right)ds\right\}\frac{U(t)}{h(t)}dt+2\int_{t'}^{t_{1}}a(t)dt\notag\\
&\geq -2U(t_{0})\int_{t'}^{t_{1}}\exp\left\{\int_{t_{0}}^{t}\left(\frac{h^{\prime}(s)+\kappa(s)}{h(s)} -\frac{2}{3}R_{0}\right)ds\right\}\frac{dt}{h(t)}+2\int_{t'}^{t_{1}}a(t)dt.\notag
\end{align}
Since $a(t),h(t)$  are finite, it follows from the last inequality that
\begin{align}
\frac{I(t_{1})}{I(t')}&\geq\exp \left\{-2U(t_{0})\int_{t'}^{t_{1}}\exp\left\{\int_{t_{0}}^{t}\left(\frac{h'(s)+\kappa(s)}{h(s)}-\frac{2}{3}R_{0}\right)ds\right\}\frac{dt}{h(t)}\right.\notag\\
&\quad\left.+2\int_{t'}^{t_{1}}a(t)dt\right\},\notag
\end{align}
which implies Corollary \ref{coro 5.3}.
\end{proof}

\begin{remark}
If we let
$$-\frac{2}{h(t)}\exp\left\{\int_{t_{0}}^{t}\left(\frac{h'(s)+\kappa(s)}{h(s)} -\frac{2}{3}R_{0}\right)ds\right\}\equiv C_{3},$$
and $a^{\prime}(t)\geq0$, where $C_{3}$ is a constant, then we get $\ln I(t)$ is convex, which is a parabolic version of the classical Hadamard’s three-circle theorem
for holomorphic functions. For example, if we let 
$$
h\equiv C_{4}, \ \ \ \kappa=\frac{2}{3}R_{0}C_{4},
$$
where $C_{4}$ is any constant, then we get the classical Hadamard’s three-circle theorem.
\end{remark}

\subsection{Parabolic frequency for the more general parabolic equations under Laplacian $G_{2}$ flow.}
This section considers the parabolic frequency for more general parabolic equations. We use the definition of parabolic frequency in Section 5.1, {\bf here we assume $h(t)$ is the negative smooth function.}

\begin{theorem}\label{theorem 5.5}
    Suppose that $u(t)$ satisfies 
    $$|(\partial_{t}-\Delta_{g(t)}) u(t)|\leq C(t)\left(|\nabla_{g(t)} u(t)|_{g(t)}+|u(t)|\right)$$ along the Laplacian $G_{2}$ flow $\eqref{G2 flow}$. Then
    \begin{align}
        \left(\ln I(t)\right)^{\prime}&\geq-\big(2+C(t)\big)\exp\left\{\int_{t_{0}}^{t}\left(-\frac{2}{3}R_{0}+\frac{h'(s)+\kappa(s)}{h(s)}\right) ds\right\}\frac{U(t)}{h(t)}-3C(t)\notag,\\
        U^{\prime}(t)&\geq C^{2}(t)\left(U(t)+h(t)\exp\left\{\int_{t_{0}}^{t}\left(-\frac{2}{3}R_{0}+\frac{h'(s)+\kappa(s)}{h(s)}\right) ds\right\}\right)\notag.
    \end{align}
\end{theorem}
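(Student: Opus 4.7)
The plan is to mirror the proof of Theorem \ref{theorem 5.2}, replacing the linear substitution $(\partial_t - \Delta_{g(t)})u = a(t)u$ by the pointwise bound $|w| \leq C(t)(|\nabla u|_{g(t)} + |u|)$, where $w := (\partial_t - \Delta_{g(t)})u$. Writing $\phi(s) = -\tfrac{2}{3}R_0 + (h'(s) + \kappa(s))/h(s)$, we have $U(t) = e^{-\int_{t_0}^t \phi}\, D(t)/I(t)$, so everything reduces to controlling $I'(t)$ and $D'(t)$.

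For the estimate on $(\ln I(t))'$, I would use the identity
\[
I'(t) = 2\int_M u\,w\,d\mu_{g(t)} - \frac{2D(t)}{h(t)},
\]
which is derived exactly as in equation \eqref{5.3} of the proof of Theorem \ref{theorem 5.2}; the linear equation was only used there to substitute for $(\partial_t - \Delta)u$, and nothing else. Bounding the first integral via the hypothesis together with $2|u||\nabla u| \leq u^2 + |\nabla u|^2$ gives
\[
\Bigl|\int_M u\,w\,d\mu\Bigr| \leq \tfrac{3C(t)}{2}I(t) + \tfrac{C(t)}{2h(t)}D(t).
\]
Dividing $I'(t)$ by $I(t)$ and expressing $D/I$ through $U$ via the definition then produces the first stated inequality.

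For $U'(t)$, I would reproduce the derivation of $D'(t)$ from the proof of Theorem \ref{theorem 5.2} via Lemma \ref{lemma 4.1}, Lemma \ref{lemma 5.1}, and the Bakry--\'Emery Ricci bound, arriving at
\[
D'(t) \geq \phi(t) D(t) + 2h(t)\!\int_M \langle\nabla u, \nabla w\rangle d\mu - 2h(t)\!\int_M |\Delta_{g(t),f(t)} u|^2 d\mu.
\]
The cross term no longer collapses to $2ah\int|\nabla u|^2 d\mu$. Instead I would integrate by parts against $d\mu_{g(t)}$ to rewrite it as $-2h\int w\,\Delta_{g(t),f(t)} u\, d\mu$, then combine with the $|\Delta_{g(t),f(t)}u|^2$ term via completing the square:
\[
-2h\!\int_M\!\left[|\Delta_{g,f}u|^2 + w\,\Delta_{g,f}u\right]d\mu = -2h\!\int_M \!\bigl|\Delta_{g,f}u + \tfrac{w}{2}\bigr|^2 d\mu + \tfrac{h}{2}\!\int_M \!w^2\, d\mu.
\]
Since $h < 0$, the first right-hand term is non-negative and may be discarded; the second is bounded below by $C^2 D(t) + C^2 h(t) I(t)$ using $|w|^2 \leq 2C^2(|\nabla u|^2 + u^2)$. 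Applying the product rule to $U = e^{-\int \phi}\,D/I$, the $\phi$-contributions from $\alpha'/\alpha$ cancel exactly against the $\phi$-terms in the $D'$ estimate (this cancellation is precisely what motivates the exponential factor in the definition of $U$), leaving $U'(t) \geq C^2 U + C^2 h\,e^{-\int_{t_0}^t\phi} - (I'/I)\,U$. The residual $-(I'/I)U$ is then absorbed using the first inequality together with the sign of $U$, which is non-positive since $D \leq 0$ when $h < 0$.

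The main obstacle is the cross term $2h\int\langle\nabla u,\nabla w\rangle d\mu$: the hypothesis controls $w$ pointwise but says nothing about $\nabla w$. Integration by parts moves the derivative onto $u$, producing a $\Delta_{g,f}u$ factor that would be uncontrollable on its own; but the companion term $-2h\int|\Delta_{g,f}u|^2 d\mu$ is already present with the correct sign, and completing the square makes the two combine into a non-negative term (which is discarded) plus a clean $\int w^2 d\mu$ residual that the hypothesis bounds. Careful sign tracking is also essential, because $h < 0$ reverses every Cauchy--Schwarz- or Young-type inequality along the way.
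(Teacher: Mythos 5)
Your treatment of $I'(t)$ and the first inequality matches the paper exactly, and your derivation of the $D'(t)$ bound via Lemma \ref{lemma 4.1}, Lemma \ref{lemma 5.1}, integration by parts, and completing the square is also the paper's. The gap is in the final absorption step.

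After discarding $-2h\int_M|\Delta_{f}u + \tfrac{w}{2}|^2\,d\mu \geq 0$ (with $w:=(\partial_t-\Delta)u$) and applying the product rule to $U=e^{-\int_{t_0}^{t}\phi}D/I$ with $\phi=-\tfrac{2}{3}R_0+\tfrac{h'+\kappa}{h}$, you arrive at
\[
U'(t)\ \geq\ C^2 U + C^2 h\,e^{-\int_{t_0}^{t}\phi}\ -\ \frac{I'(t)}{I(t)}\,U(t).
\]
To reach the stated conclusion you would need $-(I'/I)U\geq 0$. Since $-U\geq 0$ when $h<0$, that is equivalent to $I'\geq 0$, which fails in general: $I'=2\int_M uw\,d\mu-2\int_M|\nabla u|^2\,d\mu$ can certainly be negative. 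Your first inequality only gives $(\ln I)'\geq -3C-(2+C)e^{\int_{t_0}^{t}\phi}U/h$, and because $U\leq 0$ and $h<0$ both summands on the right are nonpositive; multiplying by $-U\geq 0$ therefore produces a \emph{nonpositive} lower bound for $-(I'/I)U$, not the needed $\geq 0$.

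The paper never discards the completed square. It instead decomposes
\[
D=-h\int_M u\Bigl(\Delta_f+\tfrac{1}{2}(\partial_t-\Delta)\Bigr)u\,d\mu+\frac{h}{2}\int_M u(\partial_t-\Delta)u\,d\mu,\quad
I'=2\int_M u\Bigl(\Delta_f+\tfrac{1}{2}(\partial_t-\Delta)\Bigr)u\,d\mu+\int_M u(\partial_t-\Delta)u\,d\mu,
\]
so the cross terms cancel and $I'D=-2h\bigl(\int_M u(\Delta_f+\tfrac{w}{2})u\,d\mu\bigr)^2+\tfrac{h}{2}\bigl(\int_M uw\,d\mu\bigr)^2$. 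In the numerator $(-\phi)ID+ID'-I'D$ of $I^2U'$, the piece $2h\bigl(\int u(\Delta_f+\tfrac{w}{2})u\,d\mu\bigr)^2$ from $-I'D$ then pairs with the piece $-2hI\int_M|\Delta_f u+\tfrac{w}{2}|^2\,d\mu$ from $ID'$, and their sum is $\geq 0$ by Cauchy--Schwarz (since $-2h>0$). Together with $-\tfrac{h}{2}\bigl(\int uw\,d\mu\bigr)^2\geq 0$, this leaves exactly $I^2U'\geq e^{-\int_{t_0}^{t}\phi}\tfrac{h}{2}I\int_M w^2\,d\mu$, after which the pointwise hypothesis on $w$ finishes. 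To repair your argument, keep the completed square, perform this $D$-versus-$I'$ decomposition, and invoke Cauchy--Schwarz at that level rather than attempting to absorb $-(I'/I)U$ afterward.
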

\begin{proof}
    At first, we calculate
    \begin{align}
        I^{\prime}(t)&=\int_{M}\left(2u\partial_{t}u-2u\Delta u-2|\nabla u|^{2}\right)d\mu\notag\\
        &=-\frac{2}{h}D(t)+2\int_{M}u(\partial_{t}-\Delta)ud\mu\notag\\
        &\geq-\frac{2}{h}D(t)-2C\int_{M}|u|(|\nabla u|+|u|)d\mu\notag\\
        &=-\frac{2}{h}D(t)-2CI(t)-2C\int_{M}|\nabla u||u|d\mu\notag\\
        &\geq-\frac{2}{h}D(t)-3CI(t)-\frac{C}{h}D(t)\notag\\
        &=-\frac{2+C}{h}D(t)-3CI(t)\notag\\
        &=-(2+C)\exp\left\{\int_{t_{0}}^{t}\left(-\frac{2}{3}R_{0}+\frac{h'(s)+\kappa(s)}{h(s)}\right) ds\right\}I(t)\frac{U(t)}{h}-3CI(t)\notag.
    \end{align}
    Then we get the first inequality.

    For the second inequality, we write
    $$D(t)=-h\int_{M}u\left(\Delta_{f}+\frac{1}{2}(\partial_{t}-\Delta)\right)ud\mu{+}\frac{h}{2}\int_{M}(\partial_{t}-\Delta)ud\mu.$$
    Then from $\eqref{5.3}$, we get
    \begin{align}
        I^{\prime}(t)&=\int_{M}\left(2u\partial_{t}u-2u\Delta u-2|\nabla u|^{2}\right)d\mu\notag\\
        &=2\int_{M}u(\Delta_{f}+\partial_{t}-\Delta)ud\mu\notag\\
        &=2\int_{M}u\left(\Delta{f}+\frac{1}{2}(\partial_{t}-\Delta)\right)ud\mu+\int_{M}(\partial_{t}-\Delta)ud\mu\notag.
    \end{align}
    From the above two equalities, we get
    $$I^{\prime}(t)D(t)={{-}}2h\left(\int_{M}u\left(\Delta_{f}+\frac{1}{2}(\partial_{t}-\Delta)\right)ud\mu\right)^{2}{{+}}\frac{h}{2}\left(\int_{M}(\partial_{t}-\Delta)ud\mu\right)^{2}.$$
    According to Lemma \ref{lemma 4.1} and Lemma \ref{lemma 5.1}, we can calculate
    \begin{align}
        D^{\prime}(t)&=h^{\prime}\int_{M}|\nabla u|^{2}d\mu+h\int_{M}(\partial_{t}-\Delta)|\nabla u|^{2}d\mu\notag\\
        &=h\int_{M}\Big{(}\frac{h^{\prime}}{h}|\nabla u|^{2}-2|\nabla^{2}u|^{2}+2\langle\nabla u,\nabla(\partial_{t}-\Delta)u\rangle\notag\\
        &\quad+\frac{2}{3}|{\bf T}|^{2}|\nabla u|^{2}-4|{\bf T}_{ik}\nabla_{i}u|^{2}\Big{)}d\mu\notag\\
        &=h\int_{M}\Big{(}\frac{h^{\prime}}{h}|\nabla u|^{2}-2|\Delta_{f}u|^{2}+2{\rm Ric}_{f}(\nabla u,\nabla u)-2\Delta_{f}u\cdot(\partial_{t}-\Delta)u\notag\\
        &\quad+\frac{2}{3}|{\bf T}|^{2}|\nabla u|^{2}-4|{\bf T}_{ik}\nabla_{i}u|^{2}\Big{)}d\mu\notag\\
        &=2h\int_{M}\left({\rm Ric}_{f}(\nabla u,\nabla u)+\frac{h^{\prime}}{2h}|\nabla u|^{2}+\frac{{1}}{3}|{\bf T}|^{2}|\nabla u|^{2}-{{2}}|{\bf T}_{ik}\nabla_{i}u|^{2}\right)d\mu\notag\\
        &\quad-2h\int_{M}\left(|\Delta_{f}u|^{2}+\Delta_{f}u\cdot(\partial_{t}-\Delta)u\right)d\mu\notag\\
        &\geq \int_{M}\left(\kappa+h^{\prime}-\frac{2}{3}{{h}}R_{0}\right)|\nabla u|^{2}d\mu-2h\int_{M}\left(|\Delta_{f}u|^{2}+\Delta_{f}u\cdot(\partial_{t}-\Delta)u\right)d\mu\notag\\
        &=-2h\int_{M}\left(\left|\left(\Delta_{f}+\frac{1}{2}(\partial_{t}-\Delta)\right)u\right|^{2}-\frac{1}{4}|(\partial_{t}-\Delta)u|^{2}\right)d\mu\notag\\
        &\quad+\left(\kappa+h^{\prime}-\frac{2}{3}{{h}}R_{0}\right)\frac{D(t)}{h}\notag.      
    \end{align}
where $\displaystyle{R_{0}=\min_{M\times[t_{0},t_{1}]}R(t)}$.

Together with the above, using Cauchy-Schwarz inequality, yields
\begin{align}
    I^{2}(t)U^{\prime}(t)&=\exp\left\{-\int_{t_{0}}^{t}\left(-\frac{2}{3}R_{0}+\frac{h'(s)+\kappa(s)}{h(s)}\right) ds\right\}\notag\\
    &\quad\cdot\left[\left(\frac{2}{3}R_{0}-\frac{h'(s)+\kappa(s)}{h(s)}\right)I(t)D(t)+I(t)D^{\prime}(t)-I^{\prime}(t)D(t)\right]\notag\\
    &\geq \exp\left\{-\int_{t_{0}}^{t}\left(-\frac{2}{3}R_{0}+\frac{h'(s)+\kappa(s)}{h(s)}\right) ds\right\}\notag\\
    &\quad\cdot\left\{-2hI(t)\int_{M}\left(\left|\left(\Delta_{f}+\frac{1}{2}(\partial_{t}-\Delta)\right)u\right|^{2}-\frac{1}{4}|(\partial_{t}-\Delta)u|^{2}\right)d\mu\right.\notag\\
    &\quad{{+}}\left.2h\left(\int_{M}u\left(\Delta_{f}+\frac{1}{2}(\partial_{t}-\Delta)\right)ud\mu\right)^{2}-\frac{h}{2}\left(\int_{M}(\partial_{t}-\Delta)ud\mu\right)^{2}\right\}\notag\\
    &\geq\frac{h}{2}I(t)\exp\left\{-\int_{t_{0}}^{t}\left(-\frac{2}{3}R_{0}+\frac{h'(s)+\kappa(s)}{h(s)}\right) ds\right\}\int_{M}|(\partial_{t}-\Delta)u|^{2}d\mu\notag\\
    &\geq\frac{h}{2}I(t)C^{2}\exp\left\{-\int_{t_{0}}^{t}\left(-\frac{2}{3}R_{0}+\frac{h'(s)+\kappa(s)}{h(s)}\right) ds\right\}\int_{M}(|\nabla u|+|u|)^{2}d\mu\notag\\
    &\geq C^{2}\exp\left\{-\int_{t_{0}}^{t}\left(-\frac{2}{3}R_{0}+\frac{h'(s)+\kappa(s)}{h(s)}\right) ds\right\}I(t)\big{(}D(t)+hI(t)\big{)}\notag.
\end{align}
Then we prove this Theorem. 
\end{proof}

\begin{corollary}
    Suppose that $u(t):M\times[t_{0},t_{1}]\rightarrow \mathbb{R}$ satisfies 
    $$|(\partial_{t}-\Delta_{g(t)}) u(t)|\leq C(t)\left(|\nabla_{g(t)} u(t)|_{g(t)}+|u(t)|\right)$$ along the Laplacian $G_{2}$ flow $\eqref{G2 flow}$. Then
    \begin{align}
        I(t_{1})&\geq I(t_{0})\exp\left\{\int_{t_{0}}^{t_{1}}-\big(2+\sup_{[t_{0},t_{1}]}C(t)\big)\exp\left\{\int_{t_{0}}^{t}\left(-\frac{2}{3}R_{0}+\frac{h'(s)+\kappa(s)}{h(s)}\right) ds\right\}\right.\notag\\  
        &\quad\cdot\frac{1}{h{{(t)}}}\left[{{\exp\left\{-\int_{t_{0}}^{t}C^{2}(s)ds\right\}}}U(t_{0})+{{\exp\left\{-\int_{t_{0}}^{t}C^{2}(s)ds\right\}}}\right.\notag\\
        &\quad\cdot\left.\int_{t_{0}}^{t_{1}}\exp\left\{\int_{t_{0}}^{\tau}\left(-\frac{2}{3}R_{0}+\frac{h'(s)+\kappa(s)}{h(s)}{{-C^{2}(s)}}\right) ds \right\}d\tau\right]dt \notag\\
        &\quad\left.-3\int_{t_{0}}^{t_{1}}\sup_{[t_{0},t_{1}]}C(t)dt\right\}\notag.
    \end{align}
    In particular, if $u(t_{1})=0$, then $u\equiv 0$ for all $t\in[t_{0},t_{1}]$.
\end{corollary}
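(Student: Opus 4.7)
The plan is to combine the two differential inequalities in Theorem \ref{theorem 5.5} via a Gr\"onwall--type argument: first turn the lower bound on $U'(t)$ into a pointwise lower bound for $U(t)$, then feed that pointwise bound into the lower bound for $(\ln I(t))'$ to obtain an $a\ priori$ differential inequality for $\ln I(t)$ that does not involve $u$. Integrating and exponentiating will yield the comparison $I(t_1) \ge I(t_0)\,\exp\{\cdots\}$, and the backward uniqueness follows by letting $u(t_1)=0$ and rerunning the argument from any intermediate time.

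More precisely, first I would abbreviate
\[
\beta(t) := \exp\!\Bigl\{\int_{t_0}^{t}\Bigl(-\tfrac{2}{3}R_{0}+\tfrac{h'(s)+\kappa(s)}{h(s)}\Bigr)ds\Bigr\},\qquad \Phi(t) := \int_{t_0}^{t} C^{2}(s)\,ds,
\]
and rewrite the second inequality of Theorem \ref{theorem 5.5} as $U'(t)-C^{2}(t)U(t)\ge C^{2}(t)h(t)\beta(t)$. Multiplying by the integrating factor $e^{-\Phi(t)}$ and integrating from $t_0$ to $t$ gives a linear Gr\"onwall-type lower bound for $U(t)$ of the schematic form
\[
U(t) \;\ge\; e^{-\Phi(t)}\,U(t_{0}) \;+\; e^{-\Phi(t)}\!\int_{t_{0}}^{t} e^{\Phi(\tau)}\,C^{2}(\tau)\,h(\tau)\,\beta(\tau)\,d\tau
\]
(up to a reshuffling that matches the nested exponential appearing in the statement; this is just bookkeeping of the integrating factor). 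Because $h<0$ and $\beta>0$, the inhomogeneous term is negative, so the bound is nontrivial.

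Next I would use the first inequality of Theorem \ref{theorem 5.5},
\[
(\ln I(t))' \;\ge\; -\bigl(2+C(t)\bigr)\,\frac{\beta(t)}{h(t)}\,U(t) - 3C(t),
\]
and observe that the coefficient $-(2+C(t))\beta(t)/h(t)$ is \emph{positive} (since $h<0$), so substituting the lower bound for $U(t)$ obtained in the previous step preserves the direction of the inequality. Replacing $C(t)$ by $\sup_{[t_{0},t_{1}]}C(t)$ wherever it appears as a scalar coefficient (but keeping $C^{2}(s)$ under the integrals where it entered through the Gr\"onwall factor) yields a right-hand side that depends only on the data $h,\kappa,R_{0},C,U(t_{0})$. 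Integrating this inequality in $t$ from $t_{0}$ to $t_{1}$ and exponentiating produces exactly the displayed lower bound for $I(t_{1})/I(t_{0})$.

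Finally, for the backward uniqueness assertion, suppose $u(t_{1})\equiv 0$, so $I(t_{1})=0$. The exponential on the right-hand side is a finite positive number because all integrands are continuous on the compact interval $[t_{0},t_{1}]$; hence $I(t_{1})\ge I(t_{0})\cdot(\text{positive constant})$ forces $I(t_{0})=0$, i.e.\ $u(t_{0})\equiv 0$. Replaying the entire argument on the subinterval $[t',t_{1}]$ for arbitrary $t'\in[t_{0},t_{1}]$ gives $I(t')=0$ for every such $t'$, and therefore $u\equiv 0$ throughout. I expect the main obstacle to be purely computational: carefully tracking the nested integrating-factor exponentials so that the final formula matches the stated one, rather than any conceptual difficulty in the Gr\"onwall step or the backward uniqueness conclusion.
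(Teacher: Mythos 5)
Your approach is essentially identical to the paper's: the paper also defines $\widehat{U}(t)=\exp\{-\int_{t_0}^t C^2(s)\,ds\}U(t)$, which is exactly your integrating-factor / Gr\"onwall step, integrates the resulting bound on $\widehat{U}'$ to get a pointwise lower bound on $U(t)$, and then substitutes it into the $(\ln I)'$ inequality, using precisely your sign observations ($h<0$, so $-(2+C)\beta/h>0$ and $U\le 0$) to justify the monotone substitution and the replacement of $C(t)$ by its supremum. The backward uniqueness argument you sketch (well-definedness of $U(t')$ forces $I(t')=0$ by contradiction for every $t'$) is also the intended one; the only minor point worth flagging is the direction of the sign of the integrating-factor exponential $e^{\pm\Phi(t)}$, which you correctly defer as bookkeeping and which indeed needs care to reproduce the displayed formula exactly.
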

\begin{proof}
    By the first inequality in Theorem \ref{theorem 5.5}, we get
    \begin{align}\label{5.8}
        \ln(I(t_{1}))&-\ln(I(t_{0}))\\
        &\geq\int_{t_{0}}^{t_{1}}-(2+C{{(t)}})\exp\left\{\int_{t_{0}}^{t}\left(-\frac{2}{3}R_{0}+\frac{h'(s)+\kappa(s)}{h(s)}\right) ds\right\}\frac{U(t)}{h(t)}dt\notag\\
        &\quad-\int_{t_{0}}^{t_{1}}3C{{(t)}}dt\notag.
    \end{align}
If we write $\widehat{U}(t)$ as
$\widehat{U}(t)=\exp\{-\int_{t_{0}}^{t}C^{2}(s)ds\}U(t)$, then from the second inequality in Theorem \ref{theorem 5.5}, we have 
\begin{align}\label{5.9}
\widehat{U}(t)^{\prime}\geq h{{(t)}}C^{2}{{(t)}}\exp\left\{\int_{t_{0}}^{t}\left(-\frac{2}{3}R_{0}+\frac{h'(s)+\kappa(s)}{h(s)}{{-C^{2}(s)}}\right) ds\right\}.
\end{align}
Integrating $\eqref{5.9}$ from $t_{0}$ to $t$ for any $t\in[t_{0},t_{1}]$,  yields
\begin{align}
    \widehat{U}(t)\geq \widehat{U}(t_{0})+\int_{t_{0}}^{t}h{{(\tau)}}C^{2}{{(\tau)}}\exp\left\{\int_{t_{0}}^{\tau}\left(-\frac{2}{3}R_{0}+\frac{h'(s)+\kappa(s)}{h(s)}{{-C^{2}(s)}}\right) ds\right\}d\tau,\notag
\end{align}
which means
\begin{align}\label{5.10}
    U(t)&\geq {{\exp\left\{-\int_{t_{0}}^{t}C^{2}(s)ds\right\}}}U(t_{0})+{{\exp\left\{-\int_{t_{0}}^{t}C^{2}(s)ds\right\}}}\\
    &\quad\cdot\int_{t_{0}}^{t_{1}}h{{(\tau)}}C^{2}{{(\tau)}}\exp\left\{\int_{t_{0}}^{\tau}\left(-\frac{2}{3}R_{0}+\frac{h'(s)+\kappa(s)}{h(s)}{{-C^{2}(s)}}\right) ds\right\}d\tau\notag.
\end{align}
where we use $h(t)$ is the negative smooth function.
Submitting $\eqref{5.10}$ to $\eqref{5.8}$, we get the desired result.
\end{proof}

\textbf{Acknowledgments.}\ \ 
The first author is supported by the National Key R$\&$D Program of China 2020YFA0712800, the Interdisciplinary Research Foundation for Doctoral Candidates of Beijing Normal University (Grant BNUXKJC2318) and the National Natural Science Foundation of China (NSFC12371049). The second author is funded by the Shanghai Institute for Mathematics and Interdisciplinary Sciences(SIMIS) under grant number SIMIS-ID-2024-LG. The authors would like to thank the referee for the valuable comments and suggestions.

\textbf{Statements and Declarations.}\ \
No conflict of interest.

\textbf{Data availability statement.}\ \
Data availability is not applicable to this manuscript as no new data was created or analyzed in this study.

\bibliographystyle{amsplain}

\end{document}